\newcommand{\Acal}{\mathcal{A}}
\newcommand{\Bcal}{\mathcal{B}}
\newcommand{\Fcal}{\mathcal{F}}
\newcommand{\Xcal}{\mathcal{X}}
\newcommand{\Ycal}{\mathcal{Y}}
\newcommand{\Zcal}{\mathcal{Z}}
\newcommand{\ch}{\mathbf{1}}
\newcommand{\Z}{\mathbb{Z}}
\newcommand{\R}{\mathbb{R}}
\newcommand{\C}{\mathbb{C}}
\newcommand{\N}{\mathbb{N}}
\newcommand{\PP}{\mathbb{P}}
\newcommand{\al}{\alpha}
\newcommand{\Ga}{\Gamma}
\newcommand{\del}{\delta}
\newcommand{\ep}{\epsilon}
\newcommand{\sig}{\sigma}
\newcommand{\la}{\lambda}
\newcommand{\tet}{\theta}
\newcommand{\om}{\omega}
\newcommand{\Om}{\Omega}
\newcommand{\br}{\vspace{3 mm}}
\newcommand{\tri}{\bigtriangleup}
\newcommand{\cls}{{\rm{cls\,}}}
\newcommand{\id}{{\rm{id}}}
\newcommand{\supp}{{\rm{supp\,}}}
\newcommand{\ess}{{\rm{ess}}}
\newcommand{\prox}{{\rm{prox}}}
\newcommand{\mpr}{{\rm{mp}}}
\newcommand{\Homeo}{\rm{Homeo\,}}
\swapnumbers \theoremstyle{plain}
\newtheorem{thm}{Theorem}[section]
\newtheorem{cor}[thm]{Corollary}
\newtheorem{lem}[thm]{Lemma}
\newtheorem{prop}[thm]{Proposition}
\theoremstyle{definition}
\newtheorem{defn}[thm]{Definition}
\newtheorem{defns}[thm]{Definitions}
\newtheorem{rmk}[thm]{Remark}
\newtheorem{prob}[thm]{Problem}
\numberwithin{equation}{section}
\begin{document}

%%%%%%%%%%%%%%%%%%%%%%%%%%%%%%%%%%%%%%%%%%%%%%%%%%%%%%%%%%%%%%%%%%%%
%%%%%%%%%%%%%%%%%%% T H E    T I T L E    %%%%%%%%%%%%%%%%%%%%%%%%%%
%%%%%%%%%%%%%%%%%%%%%%%%%%%%%%%%%%%%%%%%%%%%%%%%%%%%%%%%%%%%%%%%%%%%

%eee
\title
{Stationary dynamical systems}
%eee
\footnote{A preliminary version of this work has been in circulation
as a preprint for several years now but for technical reasons was not
previously submitted for publication.}
\author{Hillel Furstenberg }

\address{Department of Mathematics\\
     Hebrew University of Jerusalem\\
         Jerusalem\\
         Israel}
\email{harry@math.huji.ac.il}

\author{Eli Glasner }

\address{Department of Mathematics\\
     Tel Aviv University\\
         Ramat Aviv\\
         Israel}
\email{glasner@math.tau.ac.il}

%\date {July 31, 1998}
\date {September 16, 2009}

\keywords{Stationary systems, $m$-systems, $SL(2,\R)$, Szemer\'edi,
stiff, WAP, SAT}

%\address{Department of Mathematics\\
%     Tel Aviv University\\
%         Ramat Aviv\\
%         Israel}
%\email{glasner@math.tau.ac.il}
%\address {Institute of Mathematics\\
% Hebrew University of Jerusalem\\
%Jerusalem\\
% Israel}
%\email{weiss@math.huji.ac.il}

%\subjclass{37B05, 37B10, 22-XX}

\thanks{{\it 2000 Mathematics Subject Classification.}
Primary 22D40, 22D05, 37A50 Secondary 37A30, 37A40}

\begin{abstract}
Following works of Furstenberg and Nevo and Zimmer 
we present an outline of a theory of stationary 
(or $m$-stationary) dynamical systems for a general acting group
$G$ equipped with a probability measure $m$. Our purpose is two-fold:
First to suggest a more abstract line of development, including
a simple structure theory. Second, to point out some interesting
applications; one of these is a Szemer\'edi type theorem for $SL(2,\R)$.
\end{abstract}

\maketitle

%%%%%%%%%%%%%%%%%%%%%%%%%%%%%%%%%%%%%%%%%%%%%%%%%%%%%%%%%%%%%%%%%%%%%
%%%%%%%%%%%%%%%%%%%%%%%%  CONTENTS   %%%%%%%%%%%%%%%%%%%%%%%%%%%%%%%%
%%%%%%%%%%%%%%%%%%%%%%%%%%%%%%%%%%%%%%%%%%%%%%%%%%%%%%%%%%%%%%%%%%%%%

\tableofcontents \setcounter{secnumdepth}{1}

%\addtocontents{toc}{subsection}{\protect\hspace{0.5cm}}
%\newpage

%%%%%%%%%%%%%%%%%%%%%%%%%%%%%%%%%%%%%%%%%%%%%%%%%%%%%%%%%%%%%%%%%%%%%
%%%%%%%%%%%%%%%%%%%%%%%%%%%%  TEXT   %%%%%%%%%%%%%%%%%%%%%%%%%%%%%%%%
%%%%%%%%%%%%%%%%%%%%%%%%%%%%%%%%%%%%%%%%%%%%%%%%%%%%%%%%%%%%%%%%%%%%%

\setcounter{section}{0} 
\setcounter{page}{1}

%%%%%%%%%%%%%%%%%%%%%%%%%%%%%%%%%%%%%%%%%%%%%%%%%%%%%%%%%%%%%%%%%%%%%
%%%%%%%%%%%%%%%%%%%       Inroduction         %%%%%%%%%%%%%%%%%%%%%%%
%%%%%%%%%%%%%%%%%%%%%%%%%%%%%%%%%%%%%%%%%%%%%%%%%%%%%%%%%%%%%%%%%%%%%

\section*{Introduction}

Classical ergodic theory was developed for the group
of real numbers $\R$ and the group of integers $\Z$. 
Later generalizations to $\R^d$ and $\Z^d$ actions
evolved and more recently the theory has been vastly extended 
to handle more general concrete and abstract amenable groups.
There however the theory finds a natural boundary,
since by definition it deals with measure preserving actions 
on measurable or compact spaces, and these need not
exist for a non-amenable group.
Of course semi-simple Lie groups or non-commutative free groups  
admit many interesting measure preserving actions, but for many other natural
actions of these groups no invariant measure exists.

%eee
Following works of Furstenberg (e.g. \cite{F}, \cite{F1}, \cite{F73}, \cite{F80}) 
and Nevo and Zimmer (e.g. \cite{NZ1}, \cite{NZ2}, \cite{NZ3}), 
we present here an outline of a theory of stationary 
(or $m$-stationary) dynamical systems for a general acting group
$G$ equipped with a probability measure $m$.
By definition such a system comprises a compact metric space $X$
on which $G$ acts by homeomorphisms and a probability measure
$\mu$ on $X$ which is $m$ stationary; i.e. it satisfies the 
convolution equation $m*\mu = \mu$. The immediate advantage
of stationary systems over measure preserving ones is the fact that,
given a compact $G$-space $X$, an $m$-stationary measure always exists
and often it is also quasi-invariant.

The aforementioned works, 
%eee
as well as e.g. \cite{K} and the more recent works
\cite{BS} and \cite{BQ}, amply demonstrate the potential
of this new kind of theory and our purpose here is two-fold.
First to suggest a more abstract line of development, including
a simple structure theory, and second, to point out some interesting
applications. 

%fff
We thank Benjy Weiss for substantial contributions to this work. These
were communicated to us via many helpful discussions
during the period in which this work was carried out.

\section{Stationary dynamical systems}

{\bf Definitions:} Let $G$ be a locally compact second countable
topological group, $m$ an {\bf admissible} probability measure on $G$. 
I.e. with the following two properties: 
(i) For some $k \ge 1$ the convolution power 
$\mu^{*k}$ is absolutely continuous with respect to Haar measure.
(ii) the smallest closed subgroup containing $\supp(m)$
is all of $G$. 
Let $(X,\Bcal)$ be a standard Borel space and let
$G$ act on it in a measurable way. A probability measure $\mu$ on
$X$ is called {\bf $m$-stationary}, or just {\bf stationary} when $m$ is
understood, if $m*\mu=\mu$. 
As shown by Nevo and Zimmer, every $m$-stationary
probability measure $\mu$ on a $G$-space $X$ is quasi-invariant;
i.e. for every $g\in G$, $\mu$ and $g\mu$ have the same null sets.

Given a stationary measure $\mu$ the
quintuple $\Xcal =(X,\Bcal,G,m,\mu)$ is called an {\bf
$m$-dynamical system}, or just an $m$-{\bf system\/}. (Usually we omit the
$\sig$-algebra $\Bcal$ from the notation of an $m$-system, and
often also the group $G$ and the measure $m$). An $m$-system
$\Xcal $ is called {\bf measure preserving} if the stationary
measure is in fact $G$-invariant. With no loss of generality we
may assume that the Borel space $X$ is a compact metric space and
that the action of $G$ on $X$ is by homeomorphisms. For a compact
metric space $X$, the space of probability Borel measures on $X$
with the weak* topology will be denoted by $M(X)$; it is a compact
convex metric space. When $G$ acts on $X$ by homeomorphisms the
closed convex subset of $M(X)$ consisting of $m$-stationary
measures will be denoted by $M_m(X)$. By the Markov-Kakutani fixed
point theorem $M_m(X)$ is non-empty. We say that the $m$-system
$(X,\mu)$ is {\bf ergodic} if $\mu$ is an extreme point of
$M_m(X)$ and that it is {\bf uniquely ergodic} if
$M_m(X)=\{\mu\}$. It is easy to see that when $\mu$ is ergodic
every $G$-invariant measurable subset of $X$ has $\mu$ measure $0$
or $1$. Unless we say otherwise we will assume that an $m$-system
is ergodic.

%fff

When $\Xcal =(X,\Bcal,G,m,\mu)$ and $\Ycal=(Y,\Acal,G,m,\nu)$
are two $m$-dynamical systems, a measurable map $\pi : X \to Y$
which intertwines the $G$-actions and satisfies $\pi_*(\mu)=\nu$
is called a {\bf homomorphism} of $m$-stationary systems.
We then say that $\Ycal$ is a {\bf factor} of $\Xcal$, or that $\Xcal$
is an {\bf extension} of $\Ycal$.

\br

Let $\Omega=G^\N$ and let $P=m^\N=m\times m \times m \dots$ be the
product measure on $\Omega$, so that $(\Om,P)$ is a probability space.
 We let $\xi_n:\Om\to G$, denote the
projection onto the $n$-th coordinate,\ $\ n=1,2,\dots$. We refer
to the stochastic process $(\Om,P,\{\eta_n\}_{n\in\N})$, where
$\eta_n=\xi_1\xi_2\cdots\xi_n$ as the {\bf $m$-random walk on $G$}.

%eee

A real valued function $f(g)$ for which $\int f(gg')\,dm(g')=f(g)$
for every $g \in G$ is called {\bf harmonic}. For a harmonic $f$ we have
\begin{gather*}
E(f(g\xi_1\xi_2\cdots\xi_n\xi_{n+1}|\xi_1\xi_2\cdots\xi_n)\\
=\int f(g\xi_1\xi_2\cdots\xi_n g')\,dm(g') \\ =
f(g\xi_1\xi_2\cdots\xi_n),
\end{gather*}
so that the sequence $f(g\xi_1\xi_2\cdots\xi_n)$ forms a {\bf martingale}.

For $F \in C(X)$ let $f(g)= \int F(gx)\,d\mu(x)$, then the equation 
$m*\mu=\mu$ shows that $f$ is harmonic. 
It is shown
%eee 
(e.g.) in \cite{F1} 
%eee
how these facts combined with the martingale convergence theorem lead to the following:

\begin{thm}
The limits
\begin{equation}\label{1}
\lim_{n\to\infty}\eta_n\mu=\lim_{n\to\infty}\xi_1\xi_2\cdots\xi_n\mu=\mu_\omega,
\end{equation}
exist for $P$ almost all $\om\in\Om$. 
\end{thm}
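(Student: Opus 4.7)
The plan is to use the martingale convergence theorem, already set up in the excerpt, to handle a single continuous function, and then pass to convergence of the measures via separability of $C(X)$.

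First, fix $F \in C(X)$ and set $f(g) = \int F(gx)\,d\mu(x)$, which is bounded by $\|F\|_\infty$ and harmonic because $\mu$ is $m$-stationary. The excerpt already shows that $X_n := f(\eta_n) = f(\xi_1\xi_2\cdots\xi_n)$ is a martingale with respect to the filtration $\Fcal_n = \sigma(\xi_1,\ldots,\xi_n)$. Since this martingale is uniformly bounded, the bounded (in fact $L^1$) martingale convergence theorem gives a measurable limit $L_F(\omega) = \lim_n X_n(\omega)$ for $P$-almost every $\omega$. Observing that
\[
X_n(\omega) = \int F(\eta_n(\omega)x)\,d\mu(x) = \int F(y)\,d(\eta_n(\omega)\mu)(y),
\]
we conclude that for each fixed $F \in C(X)$, the real sequence $\int F\,d(\eta_n\mu)$ converges on a set of full $P$-measure $\Om_F \subseteq \Om$.

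Next I would upgrade this pointwise-in-$F$ statement to weak* convergence of the measures $\eta_n\mu$ themselves. Since $X$ is compact metric, $C(X)$ is separable; pick a countable dense subset $\{F_k\}_{k\in\N}$ and let $\Om_0 = \bigcap_k \Om_{F_k}$, still of full $P$-measure. For $\omega \in \Om_0$, a standard $3\varepsilon$ argument using $\bigl|\int F\,d(\eta_n\mu) - \int F_k\,d(\eta_n\mu)\bigr| \le \|F-F_k\|_\infty$ (the $\eta_n\mu$ are probability measures) shows that the sequence $\int F\,d(\eta_n\mu)$ is Cauchy, hence convergent, for \emph{every} $F \in C(X)$. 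Denote the limit by $\Lambda_\omega(F)$. The functional $F \mapsto \Lambda_\omega(F)$ is linear, positive, and satisfies $\Lambda_\omega(1) = 1$, so by Riesz representation it arises from a unique Borel probability measure $\mu_\omega$ on $X$. This is exactly weak* convergence $\eta_n\mu \to \mu_\omega$, establishing \eqref{1}.

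The main obstacle, if one wants to be pedantic, is passing from convergence along a countable dense family to convergence against all of $C(X)$; but this is standard once one remembers that the sequence $\{\eta_n\mu\}$ sits inside the weak*-compact set $M(X)$. Joint measurability of the map $\omega \mapsto \mu_\omega$ follows because each $\omega \mapsto \Lambda_\omega(F_k)$ is measurable as an almost-sure limit of measurable functions, and the Borel structure on $M(X)$ is generated by the evaluations against a countable dense family in $C(X)$.
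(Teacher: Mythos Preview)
Your proof is correct and follows precisely the approach the paper sketches: the paper sets up the harmonic function $f(g)=\int F(gx)\,d\mu(x)$, observes that $f(\eta_n)$ is a martingale, and then refers to \cite{F1} for the passage via the martingale convergence theorem to the existence of $\mu_\omega$. You have simply filled in the standard details (separability of $C(X)$, Riesz representation) that the paper leaves implicit.
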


The measures $\mu_\om$ are the {\bf conditional measures} of the 
$m$-system $\Xcal $.
%-----------
%By the martingale convergence theorem we have the existence for $P$
%almost all $\om\in\Om$ of the limits
%\begin{equation}\label{1}
%\lim_{n\to\infty}\eta_n\mu=\mu_\om,
%\end{equation}
%which we call the {\bf conditional measures} of the $m$-system $\Xcal $. 
We let $\Om_0$ denote the subset of $\Om$ where the
limit \eqref{1} exists. The  fact that $\mu$ is $m$-stationary can be
expressed  as:
$$
\int \xi_1(\om)\mu dP(\om)=m*\mu=\mu.
$$
By induction we have
$$
\int \xi_1(\om)\xi_2(\om)\cdots\xi_n(\om)\mu dP(\om)=\mu,
$$
and passing to the limit we also have the {\bf barycenter equation\/}:
\begin{equation}\label{2}
\int\mu_\om dP(\om)=\mu.
\end{equation}

There is a natural ``action" of $G$ on $\Om$ defined as follows. For
$\om =(g_1,g_2,g_3,\dots)\in \Om$ and $g\in G,\ g\om\in\Om$ is
given by $g\om=(g,g_1,g_2,g_3,\dots)$. (This is not an action in the usual
sense; e.g. $g^{-1}(g\om)\ne \om$.)
%We also have the ``shift"
%acting on $\Om$ by $S(g_1,g_2,g_3,\dots)= (g_2,g_3,g_4,\dots)$;
%clearly $SP=P$. 
It is easy to see that for every $g\in G$ and
$\om\in \Om_0$,\ $\mu_{g\om}=g\mu_\om$, so that $\Om_0$ is
$G$-invariant. The map $\zeta:\Om\to M(X)$ given $P$ a.s.  by
$\om\mapsto \mu_\om=\lim_n \xi_1\xi_2\cdots\xi_n\mu$, sends the
measure $P$ onto a probability measure, $\zeta_* P=P^*\in M(M(X))$;\
i.e. $P^*$ is the distribution of the $M(X)$-valued random variable
$\zeta(\om)=\mu_\om$. Clearly for each $k\ge 1$, the
random variable $\zeta_k=\lim_{n\to\infty}
\xi_k\xi_{k+1}\cdots\xi_{k+n}\mu$ has the same distribution $P^*$
as $\zeta(\om)$. We also have $\zeta_k=\xi_k\zeta_{k+1}$. The
functions $\{\zeta_k\}$ therefore satisfy:
\begin{enumerate}
\item[(a)]\
$\zeta_k$ is a function of $\xi_k,\xi_{k+1},\dots$
\item[(b)]\
all the $\zeta_k$ have the same distribution,
\item[(c)]\
$\xi_k$ is independent of $\zeta_{k+1},\zeta_{k+2},\dots$
\item[(d)]\
$\zeta_k=\xi_k\zeta_{k+1}$.
\end{enumerate}
In other words, the $M(X)$-valued stochastic process $\{\zeta_k\}$
is an {\bf $m$-process} in the sense of definition 3.1 of \cite{F1} and it
follows that the measure $P^*$ is $m$-stationary (condition (d))
and that $\Pi(\Xcal )=(M(X),G,m,P^*)$ is an $m$-system 
\footnote {The ``barycenter" equation \eqref{2} is what makes the ``quasifactor"
$\Pi(\Xcal)$ meaningful in the general measure theoretical
setup, where $X$ is just a standard Borel space; see e.g. \cite{G}}.

\br

{\bf Definitions:} We call the $m$-system $\Xcal =(X,G,m,\mu)$,
{\bf $m$-proximal} (or a ``boundary" in the terminology of \cite{F1})
if $P$ a.s. the conditional measures $\mu_\om\in M(X)$ are point
masses. 
Clearly a factor of a proximal system is proximal as well.
Let $\pi: (X,G,m,\mu)\to (Y,G,m,\nu)$ be a homomorphism of
$m$-dynamical systems. We say that $\pi$ is a {\bf measure
preserving homomorphism} (or extension) if for every $g\in G$ we
have $g\mu_y=\mu_{gy}$ for $\nu$  almost all $y$. Here the
probability measures $\mu_y\in M(X)$ are those given by the
disintegration $\mu=\int \mu_y d\nu(y)$.  It is easy to see that
when $\pi$ is a measure preserving extension then also (with
obvious notations),  $P$ a.s. $g(\mu_\om)_y= (\mu_\om)_{gy}$
 for $\nu$  almost all $y$  . Clearly, when $\Ycal $ is the
trivial system, the extension $\pi$ is measure preserving iff the
system $\Xcal $ is measure preserving. We say that $\pi$ is an
{\bf $m$-proximal homomorphism} (or extension) if $P$ a.s. the
extension $\pi:(X,\mu_\om)\to (Y,\nu_\om)$ is a.s. 1-1, where
$\nu_\om$ are the conditional measures for the system $\Ycal $.
Clearly, when $\Ycal $ is the trivial system, the extension $\pi$
is $m$-proximal iff the system $\Xcal $ is $m$-proximal. When
there is no room for confusion we sometimes say proximal rather
than $m$-proximal.

\br

Proposition 3.2 of \cite{F1} can now be formulated as:

\begin{prop}\label{1.1}
For every $m$-dynamical system $\Xcal$ the system
$\Pi(\Xcal )=(M(X)\allowbreak,P^*)$ is
$m$-proximal. It is a trivial, one point system, iff $\Xcal $ is
a measure preserving system.
\end{prop}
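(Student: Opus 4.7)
The plan is to identify the conditional measures $P^*_\om$ of $\Pi(\Xcal) = (M(X), P^*)$ explicitly and show they are $P$-a.s.\ the point masses $\delta_{\mu_\om}$. Since $P^*$ is $m$-stationary (property (d) above), applying the existence-of-conditional-measures theorem to $(M(X), P^*)$ yields $P^*_\om = \lim_n \xi_1(\om)\cdots\xi_n(\om) P^*$. For $F \in C(M(X))$ the identity $P^* = \zeta_* P$ gives
\[
\int F\, d\bigl(\xi_1(\om)\cdots\xi_n(\om)\bigr)_* P^* = \int F\bigl(\xi_1(\om)\cdots\xi_n(\om)\mu_{\om'}\bigr)\, dP(\om').
\]

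The crux is the identification $\xi_1(\om)\cdots\xi_n(\om)\,\mu_{\om'} = \mu_{\tilde\om}$, where $\tilde\om \in \Om$ is the concatenation of $(\xi_1(\om),\dots,\xi_n(\om))$ with $\om'$; this is immediate from $\mu_{\tilde\om}=\lim_k \xi_1(\tilde\om)\cdots\xi_k(\tilde\om)\mu$. With $\om$ held fixed and $\om'$ drawn from $P$, the law of $\tilde\om$ is precisely $P$ conditioned on $\Fcal_n = \sigma(\xi_1,\dots,\xi_n)$, so the integral above equals $E[F\circ\zeta \mid \Fcal_n](\om)$. Since $\zeta$ is $\Fcal_\infty$-measurable, L\'evy's martingale convergence theorem gives $E[F\circ\zeta\mid\Fcal_n] \to F(\mu_\om)$ $P$-a.s. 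Running this simultaneously over a countable dense family in $C(M(X))$ yields the weak-$*$ convergence $(\xi_1(\om)\cdots\xi_n(\om))_* P^* \to \delta_{\mu_\om}$ for $P$-a.e.\ $\om$, which is the required $m$-proximality.

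For the second assertion, $\Pi(\Xcal)$ is a one-point system iff $P^*=\delta_{\mu_0}$ for some $\mu_0$, equivalently iff $\om\mapsto\mu_\om$ is $P$-a.s.\ constant. In that case the barycenter equation \eqref{2} forces $\mu_0 = \mu$, and the recurrence $\mu_\om = \xi_1(\om)\,\mu_{S\om}$ (where $S$ denotes the $P$-preserving shift on $\Om$) together with constancy gives $g\mu = \mu$ for $m$-a.e.\ $g$. The stabilizer $\{g \in G : g\mu = \mu\}$ is a closed subgroup of $G$ containing $\supp(m)$, and by admissibility of $m$ it must equal $G$; hence $\mu$ is $G$-invariant. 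Conversely, $G$-invariance of $\mu$ makes $\xi_1(\om)\cdots\xi_n(\om)\mu = \mu$ for every $n$ and $\om$, so $\mu_\om\equiv\mu$ and $P^* = \delta_\mu$ is trivial.

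The only delicate step is the change of variables identifying the $n$-step push-forward with a conditional expectation against $\Fcal_n$; once that is in place, standard martingale convergence produces the conditional measures, and the one-point criterion follows quickly from the barycenter equation together with the admissibility hypothesis.
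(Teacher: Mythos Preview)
Your argument is correct and follows essentially the same route as the paper. The paper does not prove Proposition~\ref{1.1} directly (it cites Proposition~3.2 of \cite{F1}), but the identical martingale mechanism appears in the proof of Theorem~\ref{2.3}, where one shows $\xi_1\cdots\xi_n\mu^* \to \mu_\om\times\delta_{\mu_\om}$ by rewriting the push-forward integral as $E(f(\theta_1)\mid\xi_1,\dots,\xi_n)$ via the relation $\xi_1\cdots\xi_n\theta_{n+1}=\theta_1$ and invoking martingale convergence; your concatenation step $\xi_1(\om)\cdots\xi_n(\om)\mu_{\om'}=\mu_{\tilde\om}$ is exactly this identity specialized to the $M(X)$ factor, and your treatment of the one-point case via the closed stabilizer and admissibility is the standard complement.
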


\br

Given the group $G$ and the probability measure $m$, there exists a
unique universal $m$-proximal system $(\Pi(G,m),\eta)$ called the
{\bf Poisson boundary} of the pair $(G,m)$. Thus every $m$-proximal
system $(X,\mu)$ is a factor of the system $(\Pi(G,m),\eta)$.

Given an $m$-system $(X,\mu)$ let
$$
h_m(X,\mu)=-\int_G \int_X \log\bigl(\frac{dg\mu}{d\mu}(x)
\bigr ) d\mu(x)dm(g),
$$
or
$$
h_m(X,\mu)=-\sum m(g)\int_X \log\bigl(\frac{dg\mu}{d\mu}(x)
\bigr ) d\mu(x),
$$
when $G$ is discrete. This nonnegative number is the $m$-{\bf
entropy} of the $m$-system $(X,\mu)$.
%eee
We have the following theorem (see \cite{F63}, \cite{NZ2}).

\begin{thm}\label{NZ}
\begin{enumerate}
\item
The $m$-system $(X,\mu)$ is measure preserving iff $h_m(X,\mu)=0$.
\item
More generally, an extension of $m$-systems $\pi:(X,\mu)\to
(Y,\nu)$ is a measure preserving extension iff
$h_m(X,\mu)=h_m(X,\nu)$.
\item
An $m$-proximal system $(X,\mu)$ is isomorphic to the Poisson
system $(\Pi(G,m),\allowbreak\eta)$ iff
$$
h_m(X,\mu)=h_m(\Pi(G,m),\eta).
$$
\end{enumerate}
\end{thm}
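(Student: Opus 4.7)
The plan is to reduce all three assertions to one principle: $h_m$ is a Jensen-type integral of the Radon–Nikodym cocycle $\frac{dg\mu}{d\mu}$, with a chain rule over disintegrations relating an extension to its factor. Jensen then reads the vanishing of an entropy difference as triviality of the cocycle. For \textbf{Part (1)}, note that $\int_X \frac{dg\mu}{d\mu}\,d\mu=1$ for every $g$, so strict convexity of $-\log$ gives $-\int_X\log\frac{dg\mu}{d\mu}\,d\mu\ge 0$ with equality iff $g\mu=\mu$. Integrating against $m$ yields $h_m(X,\mu)\ge 0$, vanishing iff $g\mu=\mu$ for $m$-a.e.\ $g$; since $\{g:g\mu=\mu\}$ is a closed subgroup of $G$, admissibility of $m$ forces it to equal $G$.

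\textbf{Part (2).} I would disintegrate $\mu=\int\mu_y\,d\nu(y)$ and check that $g\mu$ has disintegration $(g\mu_{g^{-1}y})_{y\in Y}$ over $g\nu$, giving the cocycle factorization
$$\frac{dg\mu}{d\mu}(x)=\frac{dg\mu_{g^{-1}\pi(x)}}{d\mu_{\pi(x)}}(x)\cdot\frac{dg\nu}{d\nu}(\pi(x)).$$
Taking $\log$ and integrating against $d\mu(x)\,dm(g)$ decomposes
$$h_m(X,\mu)=h_m(Y,\nu)+h_m(X\mid Y),$$
where the conditional entropy
$$h_m(X\mid Y)=-\int\!\!\int\!\!\int\log\frac{dg\mu_{g^{-1}y}}{d\mu_y}(x)\,d\mu_y(x)\,d\nu(y)\,dm(g)$$
is $\ge 0$ by fiberwise Jensen, with equality iff $g\mu_{g^{-1}y}=\mu_y$, i.e.\ $g\mu_y=\mu_{gy}$, for $m$-a.e.\ $g$ and $\nu$-a.e.\ $y$. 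A fiberwise version of the admissibility argument of Part (1) upgrades this to all of $G$, giving measure preservation. Part (1) is the special case when $Y$ is trivial.

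\textbf{Part (3).} By the universal property of the Poisson boundary there is a factor $\pi:(\Pi(G,m),\eta)\to(X,\mu)$, and Part (2) gives $h_m(\Pi,\eta)=h_m(X,\mu)$ iff $\pi$ is measure preserving. It then remains to show that a measure preserving extension between two $m$-proximal systems is an isomorphism. Disintegrate $\eta=\int\eta_y\,d\mu(y)$; using $g\eta_y=\eta_{gy}$ iteratively,
$$\xi_1\cdots\xi_n\eta=\int\eta_y\,d(\xi_1\cdots\xi_n\mu)(y).$$
Letting $n\to\infty$ and invoking \eqref{1}, together with the text's remark that measure preservation persists for the conditional measures $(\mu_\om)_y$, yields $\eta_\om=\int\eta_y\,d\mu_\om(y)$ $P$-a.s. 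Proximality of $X$ makes $\mu_\om$ a point mass $\delta_{y_\om}$, so $\eta_\om=\eta_{y_\om}$; proximality of $\Pi$ then forces $\eta_{y_\om}$ to be a point mass for $P$-a.e.\ $\om$, and since $y_\om$ has distribution $\mu$ by the barycenter equation \eqref{2}, $\eta_y$ is a point mass for $\mu$-a.e.\ $y$. Thus $\pi$ is an isomorphism of $m$-systems.

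The principal obstacle is the chain rule of Part (2): establishing the precise disintegration of $g\mu$ and justifying the exchange of integrals when the $\log$-integrand is possibly unbounded or infinite. Once that cocycle identity is in place, Part (1) is immediate and Part (3) reduces to the proximality-rigidity argument above.
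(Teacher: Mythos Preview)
The paper does not actually prove this theorem; it records it with citations to \cite{F63} and \cite{NZ2}. Your proposal therefore supplies what the paper omits, and for Parts (1) and (2) your Jensen/chain-rule argument is the standard one and is correct, modulo the integrability caveats you yourself flag.

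Part (3), however, has a genuine gap at the step ``Letting $n\to\infty$ \dots\ yields $\eta_\om=\int\eta_y\,d\mu_\om(y)$.'' The map $y\mapsto\eta_y$ is only Borel and defined $\mu$-a.e.; you cannot integrate it against the weak$^*$ limit $\mu_\om=\delta_{y_\om}$, since weak$^*$ convergence of $\xi_1\cdots\xi_n\mu$ only controls integrals of \emph{continuous} test functions, and the point $y_\om$ is typically a $\mu$-null singleton on which $\eta_y$ has no canonical value. The identity $\xi_1\cdots\xi_n\eta=\int\eta_y\,d(\xi_1\cdots\xi_n\mu)(y)$ is correct, but passing to the limit on the right requires more than \eqref{1}, and the paper's remark about $(\mu_\om)_y$ does not supply it: that remark concerns the disintegration of $\eta_\om$ over $\mu_\om$, not the relation between $(\eta_\om)_y$ and the original $\eta_y$.

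A cleaner route avoids the limit entirely. Any homomorphism between two $m$-proximal systems is automatically a proximal extension: $\eta_\om=\delta_{z_\om}$ forces $\mu_\om=\pi_*\eta_\om=\delta_{\pi(z_\om)}$, so $\pi:(\Pi,\eta_\om)\to(X,\mu_\om)$ is trivially 1-1 a.s. By your Part (2), entropy equality makes $\pi$ also measure preserving; thus $\pi$ is simultaneously proximal and measure preserving, hence an isomorphism (the principle used in Proposition~\ref{ 2.2}). To see this last implication directly: the relatively independent self-joining $\lambda=\int\eta_y\times\eta_y\,d\mu(y)$ is $m$-stationary because $g\eta_y=\eta_{gy}$, and is a joining of the proximal system $(\Pi,\eta)$ with itself; by Proposition~\ref{1.3}(2) it must equal $\eta\curlyvee\eta=\int\delta_{z_\om}\times\delta_{z_\om}\,dP(\om)$, which is carried by the diagonal. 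Hence $\eta_y\times\eta_y$ is diagonal for $\mu$-a.e.\ $y$, i.e.\ $\eta_y$ is a point mass.
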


Typically the conditional measures $\mu_\om$ are singular
to the measure $\mu$. In fact we have the following statement.

\br

\begin{thm}\label{1.5} Let $\Xcal =(X,G,\mu)$ be an
$m$-system with the property that a.s. the conditional measures
$\mu_\om$ are absolutely continuous with respect to $\mu$ \
($\mu_\om<<\mu$).
%, and that for each $g\in G$,\ $g\mu <<\mu$. 
Then $\mu$ is $G$-invariant;\ i.e. $\Xcal $ is measure preserving.
\end{thm}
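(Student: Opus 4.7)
The plan is to derive $g\mu=\mu$ for $m$-almost every $g\in G$, whence admissibility of $m$ upgrades this to $G$-invariance: the stabilizer $\{g:g\mu=\mu\}$ is a closed subgroup, it contains $\supp(m)$, and the closed subgroup generated by $\supp(m)$ is all of $G$. By hypothesis the density $\phi_\om:=d\mu_\om/d\mu$ is defined $P$-almost surely, and by Nevo--Zimmer quasi-invariance so is $\rho_g:=d(g\mu)/d\mu$ for every $g\in G$.

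Starting from the identity $\mu_\om=\xi_1(\om)\mu_{\sig\om}$ (with $\sig$ the one-step shift on $\Om$) and the chain rule $\rho_{gh}(x)=\rho_g(x)\rho_h(g^{-1}x)$, one gets the density cocycle
\[
\phi_\om(x)=\phi_{\sig\om}\bigl(\xi_1(\om)^{-1}x\bigr)\,\rho_{\xi_1(\om)}(x).
\]
Taking $\log$, integrating against $d\mu_\om=d(\xi_1(\om)\mu_{\sig\om})$, and changing variables $y=\xi_1(\om)^{-1}x$ in the first piece gives the one-step relative-entropy identity
\[
H(\mu_\om\mid\mu)-H(\mu_{\sig\om}\mid\mu)=\int_X\log\rho_{\xi_1(\om)}(x)\,d\mu_\om(x),
\]
where $H(\nu\mid\mu):=\int\log(d\nu/d\mu)\,d\nu\in[0,\infty]$ is the Kullback--Leibler divergence.

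Now average over $P$. Because the coordinates are i.i.d., $\sig$ preserves $P$, so the two terms on the left have the same $P$-distribution and hence the same expectation (granted the integrability caveat discussed below); their difference integrates to zero. On the right, split $\om=(g,\om')$ with $g=\xi_1(\om)\sim m$ independent of $\om'=\sig\om\sim P$, substitute $\mu_\om=g\mu_{\om'}$, apply the barycenter equation $\int\mu_{\om'}\,dP(\om')=\mu$ via Fubini, and change variables $x=gy$ in the inner integral. The resulting identity reads
\[
\int_G\int_X\rho_g(x)\log\rho_g(x)\,d\mu(x)\,dm(g)=0.
\]
Each inner integral is the relative entropy $H(g\mu\mid\mu)\ge 0$, nonnegative by Jensen applied to the convex function $t\log t$ with $\int\rho_g\,d\mu=1$; so it vanishes for $m$-a.e.\ $g$, which is $\rho_g\equiv 1$ $\mu$-a.e., i.e.\ $g\mu=\mu$, and the admissibility step above finishes the argument.

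The one point that must be treated with care is the cancellation $\E_P H(\mu_\om\mid\mu)=\E_P H(\mu_{\sig\om}\mid\mu)$: although the two random variables have the same $P$-law, their common value in $[0,+\infty]$ is a priori not obviously finite. This is handled by first running the above derivation on the $n$-step approximants $\mu_\om^{(n)}:=\xi_1\cdots\xi_n\mu$ --- which are automatically $\ll\mu$ by quasi-invariance and whose densities $\phi_\om^{(n)}=\E[\phi_\om\mid\xi_1,\ldots,\xi_n]$ form a nonnegative martingale converging to $\phi_\om$ in $L^1(P)$ by Doob's theorem --- and then passing to the limit via conditional Jensen (which makes $\E_P H(\mu_\om^{(n)}\mid\mu)$ nondecreasing in $n$) together with monotone/Fatou convergence for the convex function $t\log t$.
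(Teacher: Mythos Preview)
Your entropy approach is quite different from the paper's. The paper instead sends $\mu_\om\mapsto h_\om:=\sqrt{d\mu_\om/d\mu}$, checks that $h_{g\om}=U_g h_\om$ for the Koopman unitary representation on $L^2(X,\mu)$, and thereby realizes $\Pi(\Xcal)$ inside the weak closure of $\{h_\om:\om\in\Om_0\}$ in the unit ball of $L^2$. That closure is a WAP $G$-space, so the stiffness theorem for WAP systems (Theorem~\ref{thm5.4}) forces the stationary measure there to be invariant; hence $\Pi(\Xcal)$ is trivial and $\mu$ is $G$-invariant.

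Your route is appealing for its directness, but the integrability caveat you flag is a genuine gap that the approximant argument does not close. At the approximant level the correct cocycle relation is $\mu_\om^{(n)}=\xi_1(\om)\,\mu_{\sig\om}^{(n-1)}$ (the shift drops the superscript by one), so after taking $\E_P$ your one-step identity becomes
\[
a_n-a_{n-1}=\int_G H(g\mu\mid\mu)\,dm(g)=:C,\qquad a_n:=\E_P H\bigl(\mu_\om^{(n)}\mid\mu\bigr),
\]
and hence $a_n=nC$, not $0=C$. Conditional Jensen does give $a_n\le\E_P H(\mu_\om\mid\mu)$, so you would be finished \emph{if} that expectation were finite --- but absolute continuity $\mu_\om\ll\mu$ does not by itself bound relative entropy (one can have densities $\phi_n\to\phi$ in $L^1(\mu)$ with $\int\phi_n\log\phi_n\,d\mu\to\infty$), and nothing else in the hypotheses supplies such a bound. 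The cancellation on the left of your main identity is therefore unjustified, and neither monotone nor Fatou convergence rescues it.

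This is exactly why the paper's choice matters: the square-root embedding lands in a \emph{bounded} set --- the $L^2$ unit sphere --- whose weak compactness is what drives the WAP argument. To salvage the entropy route you would need an independent a priori reason why $\sup_n a_n<\infty$ follows from $\mu_\om\ll\mu$ alone; I do not see one.
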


\begin{proof}
We consider the usual unitary representation of $G$ on
$H=L_2(X,\mu)$ given by
$$
U_gf(x)=f(g^{-1}x)u(g^{-1},x),\qquad \text{\rm with}\qquad
u(g,x)=\sqrt{\frac{dg^{-1}\mu}{d\mu}}.
$$
For $\om\in\Om_0$ let $f_\om=\frac{d\mu_\om}{d\mu}\in L_1(\mu)$
denote the Radon-Nikodym derivative of $\mu_\om$ w.r.t. $\mu$, and
put $h_\om=\sqrt{f_\om}$.
Then for $\om\in\Om_0,\ g\in G$ and $f\in L_2(X,\mu)$, denoting
$v(g,x)= \frac{dg^{-1}\mu}{d\mu}$, we get
\begin{align*}
\int  f(x)dg\mu_\om(x)&=\int f(gx)f_\om(x)d\mu(x)\\
&= \int f(x)f_\om(g^{-1}x)dg\mu(x)\\
&=\int f(x)f_\om(g^{-1}x)v(g^{-1},x)d\mu(x).
\end{align*}
Hence  $f_{g\om}=(f_\om\circ g^{-1})\cdot v(g^{-1},\cdot)$ and
$$
h_{g\om}=(h_\om\circ g^{-1})\cdot u(g^{-1},\cdot)=U_gh_\om.
$$
It is now easy to see that the map $\mu_\om\mapsto h_\om$
from $\Om_0$ into the unit ball $B$ of $H=L_2(X,\mu)$, is a Borel
isomorphism which intertwines the $G$-action on $\Om_0$
with the unitary action of $G$ on $B$.
If we let $Y$ be the weak closure of the set of functions
$\{h_\om:\om \in \Om_0\}$ in $B$,
we get a compact $G$-space $(Y,G)$ by restricting the unitary
representation $g\mapsto U_g$ to $Y$. Such a $G$-space is WAP
and our theorem follows from theorem \ref{thm5.4} in section \ref{sec-wap} 
below, which asserts that every $m$-stationary measure on $Y$ is
$G$-invariant. 
%fff
(For the definition and basic properties of {\bf
weakly almost periodic} (WAP) $G$-systems we refer e.g. to 
\cite[Chapter 1]{G}.)
\end{proof}

\section{Examples}

%{\bf Examples:} 
{\bf 1.}\ Let $G=SL(2,\R)$ and let $m$ be any
absolutely continuous right and left $K$ invariant probability
measure on $G$ such that $\supp(m)$ generates $G$ as a semigroup.
$G$ acts on the compact space $X$ of rays emanating from the origin
in $\R^2$---which is homeomorphic to the unite circle in $\R^2$.
Normalized Lebesgue measure $\mu$ is the unique $m$-stationary
measure on $X$. $G$ acts as well on the space $Y=\mathbb{P}^1$ of lines
in $\R^2$ through the origin (the projective line) and the natural
map $\pi:X\to Y$, that sends a ray in $X$ to the unique line that
contains it in $Y$, is a 2 to 1 homomorphism of $m$-systems, where
we take $\nu=\pi(\mu)$. It is easy to see that $(Y,\nu)$ is
$m$-proximal and that $\pi$ is a measure preserving extension. It
can be shown that $(Y,\nu)$ is the unique $m$-proximal system so
that in particular $(Y,\nu)$ is the Poisson boundary $\Pi(G,m)$.

\br

{\bf 2.}\ (\cite{F})
Let $G$ be a connected semisimple Lie group with finite center and
no compact factors.
Let $G=KNA$ be an Iwasawa decomposition,
$S=AN$ and $P=MAN$, the corresponding minimal parabolic
subgroup. Set $X=G/S$, $Y=G/P$ and let $m$ 
%fff
be an admissible probability measure on $G$.
More specifically we assume that $m$ is absolutely continuous
with respect to Haar measure, right and left $K$-invariant,
and $\supp (\mu)$ generates $G$ as a semigroup. Then
\begin{enumerate}
\item
There exists on $Y$ a unique $m$-stationary measure $\nu$ (which is
the unique $K$-invariant probability measure on $Y$) such that the
$m$-system $(Y,\nu)$ is $m$-proximal. In fact $(Y,\nu)$ is the
Poisson boundary $\Pi(G,m)$ and the collection of $m$-proximal
systems coincides with the collection of homogeneous spaces $G/Q$
with $Q$ a parabolic subgroup of $G$.
\item
For any $m$-stationary measure $\mu$ on $X$ the
natural projection $(X,\mu)\overset\pi\to (Y,\nu)$
is a measure preserving extension.
\end{enumerate}

\br

{\bf 3.}\ (\cite{NZ1}) Let $G$ be a connected semisimple Lie group with
finite center, no compact factors, and $\R$-rank $(G)\ge2$. Let $m$
be an admissible probability measure on $G$ and let $(X,G)$ be
a compact metric $G$-space. Let $P$ be
a minimal parabolic subgroup of $G$ and $\la$ a $P$-invariant
probability on $X$. Let $\nu_0$ be the unique $m$-stationary
probability measure on $G/P$. Let $\tilde\nu_0$ be any probability
measure on G which projects onto $\nu_0$ under the natural
projection of $G$ onto $G/P$, and put $\mu=\tilde\nu_0*\la$ (it
follows from \cite{F}, that $(X,\mu)$ is an $m$-system, and moreover
that any $m$-stationary measure on $X$ is of this form). Suppose
further that the measure preserving $P$-action $(X,\la)$ is mixing.
Then there exists a parabolic subgroup $Q\subset G$, a $Q$-space
$Y$, and a $Q$-invariant probability measure $\eta$ on $Y$ such
that the $m$-system $(X,\mu)$ is isomorphic to the ``induced"
$m$-system $Y\underset Q\times G/Q=((Y\times G)/Q,\tilde\eta)$,
where $\tilde\eta$ is an $m$-stationary measure. In particular
$(X,\mu)$ is a measure preserving extension of an $m$-proximal
system $G/Q$, and $\mu$ is $G$-invariant iff $Q=G$.

\br

In the following examples let $G$ be the free group on two
generators, $G=F_2=\langle a,b \rangle$, and $m=\frac
14(\del_a+\del_b+\del_{a^{-1}}+\del_{b^{-1}})$.

\br

{\bf 4.}\ (See \cite{F1})
Let $Z$ be the space of right infinite reduced words on the letters
$\{a,a^{-1},b,b^{-1}\}$. $G$ acts on $Z$ by concatenation on the left
and reduction. Let $\eta$ be the probability measure on $Z$ given by
$$
\eta(C(\ep_1,\dots,\ep_n))=\frac1{4\cdot 3^{n-1}},
$$
where for $\ep_j\in\{a,a^{-1},b,b^{-1}\}$,\
$C(\ep_1,\dots,\ep_n)=\{z\in Z:z_j=\ep_j,\ j=1,\dots,n\}$. The
measure $\eta$ is $m$-stationary and the $m$-system $\Zcal
=(Z,\eta)$ is $m$-proximal. In fact $\Zcal $ is the Poisson
boundary $\Pi(F_2,m)$.

\br

{\bf 5.}\ 
Let $Y=\{0,1\},\ \nu=\frac 12(\del_0+\del_1)$, and the
action be defined by $a\ep=\bar\ep,\ b\ep=\bar\ep$ for $\ep\in
\{0,1\}$, where $\bar 0=1$ and $\bar 1=0$. $\Ycal =(Y,\nu)$ is a
measure preserving system.

\br

{\bf 6.}\
Let $X=Y\times Z$,\ $\mu=\nu\times\eta$,\
where $Y,Z,\nu,\eta$ are as above, and let the action
of $G$ on $X$ be defined as follows:
\begin{align*}
a(\ep,z)&=(\bar\ep,a_\ep z),\qquad a^{-1}(\ep,z)
=(\bar\ep,a^{-1}_{\bar\ep} z),\\
b(\ep,z)&=(\bar\ep,b_\ep z),\qquad b^{-1}(\ep,z)
=(\bar\ep,b^{-1}_{\bar\ep} z),
\end{align*}
where for $g\in G$ we
let $g_0=e$ and $g_1=g$. Finally let $\pi:X\to Y$ be the
projection on the first coordinate. One can check that $m*\mu=\mu$
so that $\Xcal $ is an $m$-system, and that the extension $\pi$
is a relatively proximal extension. We claim that the following
system is a description of $\Pi(\Xcal )=(M,P^*)$. Let
$M=\{\langle (\ep,z),(\bar\ep, z')\rangle: \ep\in \{0,1\},\
z,z'\in Z\}$, here $\langle\cdot,\cdot\rangle$ denotes the {\it
unordered} pair. The measure $P^*$ is given by
$$
P^*\bigl((\{\ep\}\times A)\times (\{\bar\ep\}\times B)\cup
(\{\bar\ep\}\times B)\times (\{\ep\}\times A)\bigr)=\eta(A)\eta(B),
$$
for $A,B\subset Z$ and $\ep\in \{0,1\}$. It is not hard to see
that, although the $m$-system $\Xcal $ is not measure preserving,
it admits no nontrivial $m$-proximal factor.

\br

{\bf 7.}\ 
A small variation on example 6 gives an example of a
similar nature, with the conditional measures $\mu_\om$ being
continuous. Take $Y$ to be the diadic adding machine
$Y=\{0,1\}^\N= \{\ep=(\ep_1,\ep_2,\ep_3,\dots):\ep_i\in\{0,1\}\}$,
let $X=Y\times Z$, and define the action of $F_2$ on $X$ by:
\begin{align*}
 a(\ep,z)&=(\ep+\ch,a_\ep z),\qquad
a^{-1}(\ep,z)=(\ep+\ch,a^{-1}_{\ep} z),\\
b(\ep,z)&=(\ep+\ch,b_\ep z),\qquad
b^{-1}(\ep,z)=(\ep+\ch,b^{-1}_{\ep+\ch} z),
\end{align*}
where $\ch=(1,0,0,\dots)$ and
$a_\ep=e$ when $\ep_1=0$,\  $a_\ep=a$ when $\ep_1=1$,
and $b_\ep$ is defined similarly.

\br

{\bf 8.}\
Let $G$ be the closed subgroup of the Lie group
$GL(4,\R)$ consisting of all $4 \times 4$ matrices of the form
$$
\begin{pmatrix}
A & 0 \\
0 & B
\endpmatrix
\qquad\text{and} \qquad
\pmatrix
0 & A \\
B & 0
\end{pmatrix}
$$
with $A,B\in GL(2,\R)$. We let $G$ act on the subspace $X$ of the
projective space $\mathbb{P}^3$ consisting of the disjoint union of the two
one dimensional projective spaces $\mathbb{P}^1$, which are naturally
embedded in $\mathbb{P}^3$, the quotient space of
$\R^4=\R^2\times \R^2$. Call these two copies $X_1$ and $X_2$ respectively.
There is a natural projection from $(X,G)$
onto the two-point $G$-system $(Y,G)=(\{X_1,X_2\},G)$.
Let $m$ be an admissible probability on $G$ and $\mu$ an
$m$-stationary measure on $X$. Then it is easy to see that
the $m$-system $(X,\mu)$ is an $m$-proximal extension of the
(measure preserving) two-point system $Y$.
Moreover the $m$-system
$(X,\mu)$ has no nontrivial $m$-proximal factor.
If we let  $Z\subset M(X)$ be the collection of measures of the form:
$$
Z=\{\frac 12 (\del_{x_1}+\del_{x_2}): x_i\in X_i,\ i=1,2\},
$$
then one can check that the elements of $Z$ are the conditional measures
$\mu_\om$ of the $m$-system $(X,\mu)$.
It follows that $(M(X),P^*)$ is isomorphic as an $m$-system to the
symmetric product 
%eee
$\mathbb{P}^1\times \mathbb{P}^1/ \{\id,\text{flip}\}$.

\br

%fff

{\bf 9.}\ (\cite{NZ2})
Let $G=SL(2,\R)$ and fix an admissible $K$-invariant measure $m$ on $G$.
In \cite[Theorem 3.1]{NZ2} Nevo and Zimmer construct a co-compact lattice 
$\Ga< G=SL(2,\R)$, a $\Ga$-space $Z$ and an $m$-stationary measure
$\eta$ on the induced $G$-space $X=G/\Ga\underset {\Ga }{\times} Z$,
with the property that $0 < h_\eta(X) < h_\nu(Y)$, where $Y=\Pi(G,m)$ and
$\nu$ is the unique $m$-stationary probability measure on $Y$
(see example ${\bf{1}}$ above).

\br

{\bf Claim:} The $m$-system $(X,\eta,G)$ admits no nontrivial $m$-proximal factors.

\begin{proof} 
There is a unique $m$-proximal $G$-system, namely the
Poisson boundary $(\Pi(G,\allowbreak m),\nu)$. Since the entropy of the
$m$-system $(G/\Ga\underset {\Ga} {\times} Z,\eta,G)$ is strictly
lower than the entropy of $(\Pi(G,m),\nu)$, the former cannot admit
the latter as a factor.
\end{proof}

\section{Joinings}

{\bf Definitions:} Let $\Xcal $ and $\Ycal $ be two $m$-systems.
We say that a probability measure $\la$ on $X\times Y$ is an {\bf
$m$-joining} of the measures $\mu$ and $\nu$ if it is
$m$-stationary and its marginals are $\mu$ and $\nu$ respectively.
In contrast to the situation in the class of measure preserving
dynamical systems, the product measure $\mu \times \nu$ is usually
not $m$-stationary and therefore not an $m$-joining. On the other
hand we have the following natural construction. We let the
probability measure $\la \in M(X\times Y)$ be defined by
$$
\la=\mu\curlyvee\nu=\int \mu_\om \times \nu_\om dP(\om).
$$
The equation
$$
g\la=\int \mu_{g\om} \times \nu_{g\om} dP(\om),
$$
for each $g\in G$, implies
\begin{align*}
\int g\la dm(g)&=\int\int g\mu_\om \times g\nu_\om dP(\om)
dm(g)\\
&= \int\int \mu_{g\om} \times \nu_{g\om} dP(\om)dm(g)\\
&=\int \mu_\om \times \nu_\om dP(\om)=\la;
\end{align*}
i.e.\ $\la$ is $m$-stationary. We call the $m$-system $\Xcal
\curlyvee \Ycal =(X\times Y,\la)$, the $m$-{\bf join} of the two
$m$-systems $\Xcal $ and $\Ycal $. We use the notation $\Xcal
\vee \Ycal $ to denote any joining of the systems $\Xcal $ and
$\Ycal $; e.g. when they are both factors of a third $m$-system
$\Zcal $ then we usually mean $\Xcal  \vee \Ycal $ to be the
factor of $\Zcal $ defined by the smallest $\sig$-algebra
containing $\Xcal $ and $\Ycal $.

\br

\begin{prop}\label{1.3}
Let $\Xcal $ and $\Ycal $ be two
$m$-systems,
\begin{enumerate}
\item
if $\Xcal $ is measure preserving then $\mu \curlyvee \nu=\mu
\times \nu$;
\item
if $\Xcal $ is $m$-proximal then
$$
\mu \curlyvee\nu=\int \del_{x_\om}\times \nu_\om dP(\om)
$$
is the unique $m$-joining of the two systems.
\end{enumerate}
\end{prop}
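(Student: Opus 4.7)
\medskip

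The plan is to reduce both statements to facts about the conditional measures $\mu_\om$, $\nu_\om$ and to use the barycenter equation \eqref{2} systematically.

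For part (1), I would first invoke Proposition~\ref{1.1}: if $\Xcal$ is measure preserving, the quasifactor $\Pi(\Xcal)$ is a one-point system, which means that $\mu_\om=\mu$ for $P$-almost every $\om$. Substituting this into the definition
$$
\mu\curlyvee\nu=\int \mu_\om\times\nu_\om\,dP(\om)
$$
and pulling the constant factor $\mu$ out of the integral gives
$$
\mu\curlyvee\nu=\mu\times\int\nu_\om\,dP(\om)=\mu\times\nu,
$$
where the second equality is the barycenter equation \eqref{2} applied to $\Ycal$.

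For part (2), assume $\Xcal$ is $m$-proximal, so that $P$-a.s. we may write $\mu_\om=\delta_{x_\om}$ for some $X$-valued random variable $x_\om$. The formula $\mu\curlyvee\nu=\int\delta_{x_\om}\times\nu_\om\,dP(\om)$ is then immediate from the definition of the $m$-join. The work lies in uniqueness. Given an arbitrary $m$-joining $\lambda\in M(X\times Y)$, I would apply Theorem~1.1 to the $m$-system $(X\times Y,\lambda)$ to obtain its own conditional measures $\lambda_\om=\lim_n \xi_1\cdots\xi_n\lambda$. Since the marginal maps $X\times Y\to X$ and $X\times Y\to Y$ are $G$-equivariant and push $\lambda$ forward to $\mu$ and $\nu$ respectively, the pushforward commutes with the almost-sure limit defining the conditional measures; hence the marginals of $\lambda_\om$ on $X$ and $Y$ are $\mu_\om=\delta_{x_\om}$ and $\nu_\om$ respectively. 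But a probability measure on $X\times Y$ whose first marginal is a point mass $\delta_{x_\om}$ is forced to equal $\delta_{x_\om}\times(\text{second marginal})$, so $\lambda_\om=\delta_{x_\om}\times\nu_\om$ almost surely. The barycenter equation \eqref{2} applied to $(X\times Y,\lambda)$ then yields
$$
\lambda=\int\lambda_\om\,dP(\om)=\int\delta_{x_\om}\times\nu_\om\,dP(\om)=\mu\curlyvee\nu.
$$

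The only delicate step is the claim that the pushforward of $\lambda_\om$ under the projection to $X$ equals $\mu_\om$ (and similarly for $Y$). This is essentially the continuity of the pushforward in the weak${}^*$ topology combined with the definition of the conditional measures as almost-sure weak${}^*$ limits along the random walk; I expect this to be the main technical point to spell out carefully, but it is routine given the framework already established in the excerpt.
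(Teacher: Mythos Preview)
Your proposal is correct and follows essentially the same route as the paper's proof: in (1) both arguments reduce to $\mu_\om=\mu$ a.s.\ and apply the barycenter equation, and in (2) both pass to the conditional measures $\la_\om$ of an arbitrary joining, identify their marginals as $\del_{x_\om}$ and $\nu_\om$, and conclude via \eqref{2}. The only cosmetic difference is that in (1) the paper simply asserts $\mu_\om=\mu$ directly from $G$-invariance (since then $\xi_1\cdots\xi_n\mu=\mu$ for every $n$), whereas you route through Proposition~\ref{1.1}; and the ``delicate step'' you flag about pushforwards commuting with the a.s.\ weak$^*$ limits is left implicit in the paper as well.
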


\begin{proof}
(1)\  Since the conditional measures for $\Xcal $
satisfy $\mu_\om=\mu$ a.s.,
\begin{align*}
\mu\curlyvee\nu&=\int \mu_\om \times \nu_\om dP(\om)\\
&\int \mu \times \nu_\om dP(\om)\\
&=\mu\times \int \nu_\om dP(\om)\\
&=\mu\times\nu.
\end{align*}

(2)  \ Let $\la$ be any $m$-joining of $\mu$ and $\nu$. Our
assumption now is that the conditional measures of $\Xcal $ are
a.s. point masses $\del_{x_\om}$, whence the conditional measures
$\la_\om=\lim \xi_1\xi_2\cdots\xi_n\la$ have marginals
$\del_{x_\om}$ and $\nu_\om$ on $X$ and $Y$ respectively. This
means $\la_\om= \del_{x_\om}\times\nu_\om$ and therefore
$$
\la=\int \la_\om dP(\om)=\int \del_{x_\om}\times\nu_\om dP(\om)=
\mu \curlyvee\nu.
$$
\end{proof}

\br

\begin{prop}\label{prop2.2}
\begin{enumerate}
\item
The only endomorphism of a proximal system is the identity
automorphism.
\item
For every $m$-system $(X,\mu)$ there is a unique maximal proximal
factor.
\end{enumerate}
\end{prop}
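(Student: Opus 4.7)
For part (1), my plan is to package the endomorphism $\phi:\Xcal\to\Xcal$ as a joining. The measure $\la:=(\id\times \phi)_*\mu$ is the pushforward of $\mu$ under the $G$-equivariant measurable map $x\mapsto (x,\phi(x))$, hence it is $m$-stationary, and its marginals are both $\mu$; so $\la$ is an $m$-joining of $\Xcal$ with itself. By Proposition \ref{1.3}(2), since $\Xcal$ is $m$-proximal, the only such joining is $\mu\curlyvee\mu=\int \del_{x_\om}\times\del_{x_\om}\,dP(\om)$, which is supported on the diagonal of $X\times X$. Hence $\phi(x)=x$ for $\mu$-a.e.\ $x$, and $\phi$ is the identity automorphism.

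For part (2), I would consider the family $\Ccal$ of all $m$-proximal factors of $(X,\mu)$ (non-empty, as it contains the trivial factor) and aim to show it has a maximum. The first step is to check stability of $\Ccal$ under finite joins: given $\pi_i:\Xcal\to\Ycal_i$ in $\Ccal$ for $i=1,2$, the pushforward $(\pi_1,\pi_2)_*\mu$ is an $m$-joining of $\nu_1$ and $\nu_2$, and Proposition \ref{1.3}(2), applied via the proximality of $\Ycal_1$, identifies it as $\int \del_{y^1_\om}\times\del_{y^2_\om}\,dP(\om)$. Since its conditional measures on $Y_1\times Y_2$ are point masses, the factor $\Ycal_1\vee\Ycal_2$ is again $m$-proximal.

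The second step uses that $X$ is a standard Borel space, so $\Bcal$ is countably generated mod $\mu$: the supremum of the sub-$\sigma$-algebras corresponding to factors in $\Ccal$ is itself countably generated, and by stability under finite joins can be realized as the join of a countable increasing chain $\Ycal_1\subset\Ycal_2\subset\cdots$ with $\Ycal_n\in\Ccal$. The main technical point will be verifying that the inverse limit $\Ycal_\infty=\varprojlim\Ycal_n$ is still $m$-proximal. For this I would use the fact that conditional measures transform under factor maps, so each quotient $\pi_{n,\infty}:Y_\infty\to Y_n$ pushes $\nu^\infty_\om$ forward to $\nu^n_\om=\del_{y^n_\om}$; therefore $\nu^\infty_\om$ is supported on the single point $(y^n_\om)_{n\ge 1}\in Y_\infty$ and is itself a point mass. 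This $\Ycal_\infty$ then dominates every element of $\Ccal$ and is the desired unique maximal proximal factor.
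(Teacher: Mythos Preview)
Your proof is correct. For part (2) the paper simply asserts that ``the join of all proximal factors of an $m$-system $(X,\mu)$ is a maximal proximal factor,'' so your plan (finite joins stay proximal via Proposition~\ref{1.3}(2), then a countable inverse limit argument) is exactly the kind of verification the paper leaves to the reader.

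For part (1) you take a genuinely different route from the paper. The paper sends $x$ to the measure $\theta_x=\tfrac12(\del_x+\del_{\al(x)})\in M(X)$, obtaining a quasifactor $(M(X),\phi_*\mu)$; being a factor of a proximal system it is proximal, so its conditional measures are point masses $\del_{\theta_{x(\om)}}$, and then the barycenter map forces $\theta_{x(\om)}=\del_{x(\om)}$, i.e.\ $\al(x)=x$ a.e. Your argument instead packages $\al$ as the graph joining $(\id\times\al)_*\mu$ and invokes the uniqueness clause of Proposition~\ref{1.3}(2) to identify it with the diagonal joining $\int\del_{x_\om}\times\del_{x_\om}\,dP(\om)$. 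Your approach is shorter and stays entirely within the joining formalism already set up in the section; the paper's approach has the mild advantage of not needing the uniqueness statement in Proposition~\ref{1.3}(2), only the more elementary fact that factors of proximal systems are proximal.
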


\begin{proof}
(1)\ Let $\al:X\to X$ be an endomorphism of the proximal system
$(X,\mu)$. Consider the map $\phi:x\mapsto
\tet_x=\frac 12 (\del_x+\del_{\al(x)})$ of $X$ into $M(X)$.
This induces a quasifactor $(M(X),\la)$ where $\la=\phi_*(\mu)$.
Now the conditional measures of the proximal system $(M(X),\la)$
are point masses of the form $\del_{\tet_x}$. On the other hand
applying the barycenter map $b$ to the limits:
$$
\xi_1(\om)\cdots\xi_n(\om)\la\to \del_{\tet_{x(\om)}},
$$
we get
$$
\xi_1(\om)\cdots\xi_n(\om)\mu\to \del_{x(\om)}.
$$
Thus $b(\del_{\tet_{x(\om)}})=\tet_{x(\om)}=\del_{x(\om)}$ a.e.;
i.e. $\al(x)=x$ a.e.

(2)\ It is easy to check that the join of all proximal factors of
an $m$-system $(X,\mu)$ is a maximal proximal factor of $(X,\mu)$.
\end{proof}

\section{A structure theorem for stationary systems}

\begin{prop}\label{2.1}
Let
\begin{equation*}
\xymatrix
{
( X,\mu) \ar[dd]_{\pi} \ar[dr]^{\sig}  &  \\
& (Z,\eta) \ar[dl]^{\rho}\\
(Y,\nu) &
}
\end{equation*}
be a commutative diagram of $m$-systems
\begin{enumerate}
\item
if $\pi$ is a measure preserving extension then so are $\rho$ and $\sig$.
\item
if $\pi$ is a proximal extension then so are $\rho$ and $\sig$.
\end{enumerate}
\end{prop}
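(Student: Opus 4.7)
The two items admit different natural attacks. For part (1) the cleanest route is the entropy criterion of Theorem~\ref{NZ}(2), combined with the standard monotonicity of the $m$-entropy $h_m$ under factor maps. For part (2) I will work directly with the conditional measures $\mu_\om$, $\eta_\om$, $\nu_\om$ and their disintegrations, using that factor maps transport them coherently along the diagram.

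\textbf{Proof of (1).} The first step is to record the inequality
\[
h_m(X,\mu)\ \ge\ h_m(Z,\eta)\ \ge\ h_m(Y,\nu).
\]
This follows from Jensen's inequality for the convex function $-\log$: for a factor $\sig:X\to Z$, the $G$-equivariance of $\sig$ and a direct computation identify $\frac{dg\eta}{d\eta}\circ\sig$ with the conditional expectation $\E_\mu\bigl[\frac{dg\mu}{d\mu}\bigm|\sig^{-1}\Bcal_Z\bigr]$, and convexity of $-\log$ yields the displayed entropy inequality after integrating against $\mu$ and $m$; the same reasoning applies to $\rho$. If now $\pi$ is measure preserving, Theorem~\ref{NZ}(2) gives $h_m(X,\mu)=h_m(Y,\nu)$, forcing equality throughout the chain above. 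Two further applications of Theorem~\ref{NZ}(2) then yield that both $\sig$ and $\rho$ are measure preserving extensions.

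\textbf{Proof of (2).} Since $\sig$ and $\rho$ are continuous on the compact metric $G$-spaces, the pushforwards $\sig_*,\rho_*$ are weak-$*$ continuous on the relevant spaces of probability measures, so passing to the limit in $\xi_1\cdots\xi_n\mu\to\mu_\om$ gives
\[
\sig_*(\mu_\om)=\eta_\om,\qquad \rho_*(\eta_\om)=\nu_\om,\qquad P\text{-a.s.}
\]
By hypothesis, $P$-a.s.\ the disintegration $(\mu_\om)_y$ of $\mu_\om$ over $\nu_\om$ via $\pi$ is a point mass for $\nu_\om$-a.e.\ $y$. For $\rho$: pushing the decomposition $\mu_\om=\int(\mu_\om)_y\,d\nu_\om(y)$ through $\sig$ and invoking uniqueness of disintegration identifies the disintegration of $\eta_\om$ over $\nu_\om$ via $\rho$ as $(\eta_\om)_y=\sig_*\bigl((\mu_\om)_y\bigr)$, which is again a point mass; hence $\rho$ is $m$-proximal. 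For $\sig$: if $\sig$ failed to be essentially injective on $(X,\mu_\om)$, then $\pi=\rho\circ\sig$ would also fail to be essentially injective on $(X,\mu_\om)$, contradicting the $m$-proximality of $\pi$; hence $\sig$ is $m$-proximal.

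\textbf{Where the work sits.} The only non-routine ingredient is the monotonicity of $h_m$ used in (1); everything else is bookkeeping with disintegrations and with the weak-$*$ limits defining $\mu_\om,\eta_\om,\nu_\om$.
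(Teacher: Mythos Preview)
Your argument for (2) is correct and is precisely what the paper has in mind when it says the claim is ``a straightforward consequence of the definition of $m$-proximal extension''; you have simply supplied the details.

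For (1) your route genuinely differs from the paper's. You invoke the entropy criterion of Theorem~\ref{NZ}(2) together with monotonicity of $h_m$ under factor maps to sandwich $h_m(Z,\eta)$ between $h_m(X,\mu)$ and $h_m(Y,\nu)$. The paper instead works directly with disintegrations: writing $\mu=\int\mu_y\,d\nu(y)=\int\mu_z\,d\eta(z)$ and $\eta=\int\eta_y\,d\nu(y)$, it first observes $\eta_y=\sig_*\mu_y$, so that $g\eta_y=\sig_*(g\mu_y)=\sig_*\mu_{gy}=\eta_{gy}$, giving $\rho$ measure preserving; then it uses uniqueness of disintegration on the identity $\mu_y=\int\mu_z\,d\eta_y(z)$, compared against its $g$-translate, to extract $g\mu_z=\mu_{gz}$ for $\eta$-a.e.\ $z$, giving $\sig$ measure preserving.

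Your entropy argument is slicker when it applies, but as written it needs $h_m(X,\mu)<\infty$: if all three entropies equal $+\infty$ the sandwich is vacuous and Theorem~\ref{NZ}(2), read literally as an equality of numbers, yields nothing. One can repair this by passing to relative $m$-entropy and its chain rule, but that imports additional machinery not developed in the paper. The paper's bare-hands disintegration proof sidesteps the issue entirely and keeps Proposition~\ref{2.1} logically independent of the entropy theory.
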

\begin{proof}
(1)\ Let
$$
\mu=\int \mu_y d\nu(y)=\int \mu_z\ d\eta(z),\qquad \eta
=\int\eta_y d\nu(y),
$$
be the disintegrations of $\mu$ over $Y$ and $Z$ and of
$\eta$ over $Y$ respectively. We assume that for all $g$ and $\nu$
almost  every $y$,\ $g\mu_y=\mu_{gy}$, hence
$$
g\eta_y=g\sig\mu_y=\sig g\mu_y=\sig\mu_{gy}=\eta_{gy},
$$
so that $\rho$ is a measure preserving extension. Now, since
$$
\mu=\int \mu_y d\nu(y)=\int \mu_z\ d\eta(z)
=\int\bigl(\int \mu_z\ d\eta_y(z)\bigr)d\nu(y),
$$
the uniqueness of disintegration shows that
$$
\mu_y=\int \mu_z\ d\eta_y(z).
$$
Thus for $g\in G$ we have:
$$
g\mu_y=g\bigl(\int \mu_z\ d\eta_y(z)\bigr)
=\int g\mu_z d\eta_y(z),
$$
and also
$$
g\mu_y=\mu_{gy}=\int \mu_z\ d\eta_{gy}(z)
=\int \mu_z\ dg\eta_y(z)= \int \mu_{gz}\ d\eta_y(z).
$$
Again the uniqueness of disintegration yields $g\mu_z=\mu_{gz}$,
so that also $\sig$ is a measure preserving extension.

(2)\ This is a straightforward consequence of the definition of
$m$-proximal extension.
\end{proof}

\br

Let us call an $m$-system $(X,\mu)$ {\bf standard} if there exists
a homomorphism $\pi:(X,\mu)\to(Y,\nu)$ with $(Y,\nu)$ proximal and
the homomorphism $\pi$ a measure preserving extension.
%fff
Note that with this terminology the results described in the examples
{\bf{2}} and {\bf{3}} above can be stated as saying that the stationary systems
$(X,\mu)$ described there are standard (of a very particular kind,
namely measure preserving extensions of boundaries of the form
$G/Q$ with $Q \subset G$ a parabolic subgroup). 

\begin{prop}\label{ 2.2}
\begin{enumerate}
\item
The structure of a standard system as a measure preserving
extension of a proximal system is unique.
\item
Let $(X,\mu)$ be a standard $m$-system: $\pi:(X,\mu)\to(Y,\nu)$
with $(Y,\nu)$ proximal and the homomorphism $\pi$ a measure
preserving extension. If $\al:(X,\mu)\to (Z,\eta)$ is a measure
preserving homomorphism then there is a commutative diagram:
\begin{equation*}
\xymatrix
{
X \ar[dd]_{\pi} \ar[dr]^{\al}  &  \\
& Z \ar[dl]^{\beta}\\
Y &
}
\end{equation*}
\end{enumerate}
\end{prop}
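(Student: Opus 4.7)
Both parts will reduce to a common auxiliary claim: if $\rho: (W,\lambda) \to (V,\tau)$ is a measure preserving extension with both $W$ and $V$ proximal, then $\rho$ is an isomorphism. To prove this I combine proximality, which gives $\lambda_\omega = \delta_{w(\omega)}$ and $\tau_\omega = \delta_{v(\omega)}$ $P$-a.s., with the disintegration $\lambda = \int \lambda_v \, d\tau(v)$ and the equivariance $g\lambda_v = \lambda_{gv}$ supplied by measure preservation. The standard limit computation $\xi_1 \cdots \xi_n \lambda = \int \lambda_v \, d(\xi_1 \cdots \xi_n \tau)(v)$ then yields $\lambda_\omega = \int \lambda_v \, d\tau_\omega(v) = \lambda_{v(\omega)}$, so $\lambda_v$ is a point mass for $\tau$-a.e.\ $v$, and $\rho$ is a.e.\ bijective.

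For part (1), given two standard structures $\pi_i: X \to Y_i$, $i=1,2$, I form the join $Y_1 \vee Y_2$ as a factor of $X$ via $\sigma(x) = (\pi_1(x), \pi_2(x))$. Its conditional measures $\lambda_\omega = \sigma_*(\mu_\omega)$ have both marginals point masses (by proximality of the $Y_i$), so $\lambda_\omega$ is itself a point mass and $Y_1 \vee Y_2$ is proximal. Proposition~\ref{2.1}(1) applied to the factorisation $\pi_i: X \to Y_1 \vee Y_2 \to Y_i$ makes both projections $Y_1 \vee Y_2 \to Y_i$ measure preserving, and the auxiliary claim forces each to be an isomorphism. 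Therefore $Y_1 = Y_2$ as factors of $X$.

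For part (2) the plan is to construct $\beta: Z \to Y$ directly from the disintegration $\mu = \int \mu^z \, d\eta(z)$, by showing that $\nu^z := \pi_*(\mu^z)$ is $\eta$-a.s.\ a point mass $\delta_{\beta(z)}$. The key identity, obtained by the same limit argument from measure preservation of $\alpha$, is
\[
\mu_\omega \;=\; \int \mu^z \, d\eta_\omega(z) \qquad (P\text{-a.s.}).
\]
Applying $\pi_*$ and using proximality of $Y$ (so that $\pi_*(\mu_\omega) = \delta_{y(\omega)}$) yields $\delta_{y(\omega)} = \int \nu^z \, d\eta_\omega(z)$. Since a point mass is extremal in $M(Y)$, this forces $\nu^z = \delta_{y(\omega)}$ for $\eta_\omega$-a.e.\ $z$; the barycenter equation $\eta = \int \eta_\omega \, dP(\omega)$ transfers this to $\eta$-a.e.\ $z$, defining $\beta(z)$ by $\nu^z = \delta_{\beta(z)}$. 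The equivariance $g\mu^z = \mu^{gz}$ gives $g\beta(z) = \beta(gz)$, and $\pi = \beta \circ \alpha$ holds because $\mu^z$ is supported on $\pi^{-1}(\beta(z))$.

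The main obstacle is the identity $\mu_\omega = \int \mu^z \, d\eta_\omega(z)$ used in part (2): it is the step where measure preservation of $\alpha$ is translated into a statement about the random-walk conditional measures, and it is precisely what lets proximality of $Y$ collapse the barycenter $\int \nu^z \, d\eta_\omega(z)$ to a point mass.
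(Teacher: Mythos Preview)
Your argument for part~(1) coincides with the paper's: form the proximal join $Y_1\vee Y_2$, apply Proposition~\ref{2.1} to see the projections to each $Y_i$ are measure preserving, and conclude they are isomorphisms.

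For part~(2) the paper takes a shorter route than yours. It forms the join $Y\vee Z$ inside $X$ and looks at the projection $\psi:Y\vee Z\to Z$. By Proposition~\ref{2.1} applied to the factorisation $X\to Y\vee Z\to Z$ of $\alpha$, the map $\psi$ is measure preserving; and since $Y$ is proximal, the conditional measure on $Y\vee Z$ sits over a single point of $Y$, so $\psi$ is a proximal extension as well. Hence $\psi$ is an isomorphism and $\beta:Z\cong Y\vee Z\to Y$ drops out immediately.

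Your direct construction of $\beta$ from the fibre measures $\nu^z=\pi_*(\mu^z)$ is morally the same conclusion, but the step you yourself flag as the main obstacle is a genuine gap as written. You obtain
\[
\mu_\omega=\int\mu^z\,d\eta_\omega(z)
\]
by passing the weak$^*$ limit $\xi_1\cdots\xi_n\eta\to\eta_\omega$ through the integral $\int\mu^z\,d(\cdot)(z)$. But $z\mapsto\mu^z$ is only measurable and only defined $\eta$-a.e., while $\eta_\omega$ is typically singular to $\eta$; weak$^*$ convergence of the integrating measures does not by itself justify this limit. In your auxiliary claim the base $V$ is proximal, and there one can rescue the step via the Poisson-boundary identification of bounded harmonic functions with $L^\infty(V,\tau)$; in part~(2), however, $Z$ is an arbitrary factor and that rescue is unavailable. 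The identity can be established (for instance by a martingale argument in the style of the proof of Theorem~\ref{2.3}, or by a relative-product joining trick), but it is not the one-line ``same limit argument'' you invoke. The paper's $Y\vee Z$ approach avoids this analytic issue entirely by reducing everything to Proposition~\ref{2.1}.
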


\begin{proof} (1)\ Let $(X,\mu)$ be a standard $m$-system:
$\pi:(X,\mu)\to(Y,\nu)$ with $(Y,\nu)$ proximal and the
homomorphism $\pi$ a measure preserving extension. If
$\pi':(X,\mu)\to(Y',\nu')$ is another factor with $(Y',\nu')$
proximal, then the system $Y\vee Y'$ is also $m$-proximal and we
have the diagram:
\begin{equation*}
\xymatrix
{
X \ar[dd]_{\pi} \ar[dr]^{\sig}  &  \\
& Y \vee Y'\ar[dl]^{\rho}\\
Y &
}
\end{equation*}
Now $\rho$ is clearly a proximal extension and by proposition \ref{2.1}
it is also a measure preserving extension. Thus $\rho$ is an
isomorphism, so that $Y'$ is a factor of $Y$. We now have the
diagram:
\begin{equation*}
\xymatrix
{
X \ar[dd]_{\pi'} \ar[dr]^{\pi}  &  \\
& Y \ar[dl]^{\al}\\
Y'&
}
\end{equation*}
If $\pi'$ is a measure preserving homomorphism then by proposition
\ref{2.1} so is $\al$ and being also a proximal homomorphism it is
necessarily an isomorphism.

(2)\ Consider the diagram:
\begin{equation*}
\xymatrix
{
X \ar[dd]_{\al} \ar[dr]^{\phi}  &  \\
& Y\vee Z \ar[dl]^{\psi}\\
Z &
}
\end{equation*}
Since $\al$ is a measure preserving homomorphism so is $\psi$
(proposition \ref{2.1}). On the other hand, since $Y$ is proximal it
follows that $\psi$ is a proximal extension. Thus $\psi$ is an
isomorphism and we deduce that $Y$ is a factor of $Z$ as required.
\end{proof}

\br

\begin{thm}[A structure theorem for stationary systems]\label{2.3}
Let
$\Xcal=(X,\mu)$ be an $m$-system, then there exist canonically
defined $m$-systems $\Xcal^*=(X^*,\mu^*)$, and $\Pi(\Xcal)=(M,P^*)$,
with $\Xcal^*$ standard and $\Pi(\Xcal)$ \
$m$-proximal, and a diagram
\begin{equation*}
\xymatrix
{
& \Xcal^* \ar[dl]_{\pi} \ar[dr]^{\sig}  &  \\
\Xcal &  & \Pi(\Xcal)
}
\end{equation*}
where $\pi$ is an $m$-proximal extension, and $\sig$ is a measure
preserving extension. Thus every $m$-system admits an $m$-proximal
extension which is standard. The $m$-system $\Xcal$ is measure
preserving iff $\Pi(\Xcal)$ is trivial. The $m$-system $\Xcal$
is $m$-proximal iff both $\pi$ and $\sig$ are isomorphisms. We
call $\Xcal^*$ the {\bf standard cover} of $\Xcal$.
\end{thm}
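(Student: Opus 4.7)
The plan is to construct $\mathcal{X}^*$ as a canonical joining of $\mathcal{X}$ with its proximal quasifactor $\Pi(\mathcal{X})=(M(X),P^*)$, using the conditional measures to pin down the joining. Specifically, I would put $X^{*}=X\times M(X)$ and define
\begin{equation*}
\mu^{*}=\int \mu_{\omega}\times\delta_{\mu_{\omega}}\,dP(\omega),
\end{equation*}
with the obvious diagonal $G$-action. The two projections $\pi:X^{*}\to X$ and $\sigma:X^{*}\to M(X)$ are the candidate factor maps, so I first need to verify the basic bookkeeping: that $\mu^{*}$ is $m$-stationary, that $\pi_{*}\mu^{*}=\mu$, and that $\sigma_{*}\mu^{*}=P^{*}$. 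Stationarity is immediate from $g\mu_{\omega}=\mu_{g\omega}$ combined with the invariance of the path-space measure $P$ under prepending an $m$-distributed letter (precisely the computation already used in Section~3 to show that $\mu\curlyvee\nu$ is $m$-stationary), and the two marginal identities are the barycenter equation \eqref{2} applied to $\mathcal{X}$ and to $\Pi(\mathcal{X})$ respectively.

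Next I would identify the conditional measures of $\mathcal{X}^{*}$. Since $\Pi(\mathcal{X})$ is $m$-proximal (Proposition~\ref{1.1}) its conditional measures are point masses $\delta_{\mu_{\omega}}$, so the martingale $\xi_{1}\cdots\xi_{n}(\omega)\mu^{*}$ has second marginal converging to $\delta_{\mu_{\omega}}$ and first marginal converging to $\mu_{\omega}$; letting $n\to\infty$ gives $\mu^{*}_{\omega}=\mu_{\omega}\times\delta_{\mu_{\omega}}$. From this the remaining verifications are formal. The map $\pi:(X^{*},\mu^{*}_{\omega})\to(X,\mu_{\omega})$ is a.s. a bijection because $\mu^{*}_{\omega}$ is concentrated on the single fibre $X\times\{\mu_{\omega}\}$, so $\pi$ is $m$-proximal. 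For $\sigma$, the disintegration of $\mu^{*}$ over $P^{*}$ is $\mu^{*}_{\tau}=\tau\times\delta_{\tau}$, and $g(\tau\times\delta_{\tau})=g\tau\times\delta_{g\tau}=\mu^{*}_{g\tau}$, so $\sigma$ is a measure-preserving extension; in particular $\mathcal{X}^{*}$ is standard.

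The remaining two equivalences follow quickly. If $\mathcal{X}$ is measure preserving then $\mu_{\omega}\equiv\mu$ a.s., so $P^{*}=\delta_{\mu}$ and $\Pi(\mathcal{X})$ is the one-point system; conversely if $\Pi(\mathcal{X})$ is trivial then $\mu_{\omega}=\mu$ a.s. and Proposition~\ref{1.1} forces $\mathcal{X}$ to be measure preserving. If $\mathcal{X}$ is $m$-proximal then $\mu_{\omega}=\delta_{x_{\omega}}$, so $\mu^{*}_{\omega}$ is a point mass on $X^{*}$ and both $\pi,\sigma$ are a.s. bijections on conditional measures, hence isomorphisms; the converse is immediate since $\mathcal{X}\cong\Pi(\mathcal{X})$ is proximal.

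Canonicity / uniqueness of the standard cover is the most delicate part, and is the step I expect to be the main obstacle. My strategy is to invoke Proposition~\ref{ 2.2}(1): any standard $\widetilde{\mathcal{X}}\to\mathcal{X}$ that is proximal and fits into the same diagram admits a unique maximal proximal factor, and that factor must coincide with $\Pi(\mathcal{X})$ because $\Pi(\mathcal{X})$ is, by Proposition~\ref{1.1} and the universality of the proximal quasifactor construction, the maximal proximal factor of any proximal extension of $\mathcal{X}$. The join of $\widetilde{\mathcal{X}}$ with $\mathcal{X}^{*}$ over $\Pi(\mathcal{X})$ is then simultaneously proximal (over $\mathcal{X}$) and measure preserving (over $\Pi(\mathcal{X})$), and a double application of Proposition~\ref{2.1} forces it to be isomorphic to both $\widetilde{\mathcal{X}}$ and $\mathcal{X}^{*}$, yielding the canonical identification.
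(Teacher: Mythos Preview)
Your construction and the overall line of argument coincide with the paper's: set $X^{*}=X\times M(X)$, $\mu^{*}=\int \mu_{\omega}\times\delta_{\mu_{\omega}}\,dP(\omega)$, verify stationarity via $g\mu_{\omega}=\mu_{g\omega}$, read off the disintegration over $\sigma$ as $\mu^{*}_{\tau}=\tau\times\delta_{\tau}$ to get that $\sigma$ is measure preserving, and identify the conditional measures $\mu^{*}_{\omega}$ to see that $\pi$ is proximal.

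Two remarks. First, your identification of $\mu^{*}_{\omega}$ via marginals is a little quick: Proposition~\ref{1.1} as stated only asserts that $\Pi(\mathcal{X})$ is proximal, i.e.\ that $P^{*}_{\omega}$ is \emph{some} point mass, not that it equals $\delta_{\mu_{\omega}}$. The paper closes this by redoing the martingale computation from \cite{F1} directly for $\mu^{*}$, showing $\lim_{n}\xi_{1}\cdots\xi_{n}\mu^{*}=\mu_{\omega}\times\delta_{\mu_{\omega}}$ via the conditional-expectation identity $E(f(\theta_{1})\mid\xi_{1},\dots,\xi_{n})$. Your shortcut is salvageable, however: once you know the second marginal of $\mu^{*}_{\omega}$ is a point mass (whatever it is), $\mu^{*}_{\omega}$ lives on a single fibre $X\times\{\tau\}$, and that alone makes $\pi$ one-to-one on conditional measures. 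So the conclusion survives even if you do not pin down $\tau=\mu_{\omega}$.

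Second, your final paragraph is not needed and is the shakiest part. In the paper ``canonically defined'' refers only to the explicit, functorial nature of the construction $\mathcal{X}\mapsto(\mathcal{X}^{*},\Pi(\mathcal{X}))$; no uniqueness-up-to-isomorphism of standard covers is claimed or proved here. Your proposed uniqueness argument leans on the assertion that $\Pi(\mathcal{X})$ is the maximal proximal factor of \emph{any} proximal extension of $\mathcal{X}$, which is not provided by Proposition~\ref{1.1} and would require separate justification (one would expect the proximal quasifactor of $\widetilde{\mathcal{X}}$ to be $\Pi(\widetilde{\mathcal{X}})$, not $\Pi(\mathcal{X})$, and their coincidence is precisely what is at stake). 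You can simply drop that paragraph.
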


\begin{proof}
We let $\Xcal^*=\Xcal \curlyvee \Pi(\Xcal)$.
Thus $X^*=X\times M(X)$ and the measure $\mu^*=\mu \curlyvee P^*$
is defined by the integral
\begin{equation}\label{3}
\mu^*=\int \mu_\om\times\del_{\mu_\om} dP(\om).
\end{equation}
The assertions of the theorem now follow from propositions
%eee
\ref{1.1} and
\ref{1.3}, \ 
however for clarity and completeness we give below a more
detailed proof. Denote by $\pi$ and $\sig$ the projections on
the first and second coordinates respectively.
Clearly $\Xcal^*=(X^*,\mu^*)$ is a joining of the systems $(X,\mu)$ and
$(M(X),P^*)$ in the sense that $\pi(\mu^*)=\mu$, and
$\sig(\mu^*)=P^*$. 
We show next that $\mu^*$ is $m$-stationary.
For $g\in G$ we have a.s.
\begin{equation}\label{4}
g\mu_{\om}=\mu_{g\om},
\end{equation}
hence
$$
g\mu^*=\int \mu_{g\om}\times\del_{\mu_{g\om}} dP(\om),
$$
hence
\begin{align*}
\int_G g\mu^* dm(g)&=\int_G\int_\Om \mu_{g\om}
\times\del_{\mu_{g\om}} dP(\om) dm(g)\\
&=\int \mu_{\xi_1 \om}\times\del_{\mu_{\xi_1 \om}} dP(\om)\\
&=\int \mu_\om\times\del_{\mu_\om} dP(\om)=\mu^*.
\end{align*}
Now \eqref{3} gives the disintegration of $\mu^*$ with respect to $P^*$,
i.e. w.r.t. $\sig$, and \eqref{4} shows that $\sig$ is a measure preserving
extension.

%eee
Next we mimic the proof of proposition 3 in \cite{F1} in order to show that the
measures $\tet_\om=\mu_\om\times\del_{\mu_\om}$ are the conditional
measures of the $m$-system $( X^*,\mu^*)$, \ i.e. we will show that
a.s.
\begin{equation}\label{5}
\lim \xi_1\xi_2\cdots\xi_n \mu^*=\tet_\om.
\end{equation}
First observe that
$$
\tet_\om=\lim_{n\to\infty}
\xi_1\xi_2\cdots\xi_n (\mu\times\del_\mu).
$$
Write $\tet_1(\om):=\tet_\om$ and let
$$
\tet_k=\lim_{l\to\infty}\xi_k\xi_{k+1}\cdots \xi_{k+l}
(\mu\times\del_\mu),
$$
so that $\xi_1\xi_2\cdots\xi_n\tet_{n+1}=\tet_1$.
For a bounded continuous function $f$ on $X^*$ and a measure
$\iota\in M(X^*)$ we write $f(\iota)=\int_{X^*}f(x^*)d\iota(x^*)$.
Now for any such $f$ we have
\begin{align*}
&\int_{X^*} f(\xi_1\xi_2\cdots\xi_nx^*) d\mu^*(x^*)\\
=&\int_\Om \int_X f(\xi_1\xi_2\cdots\xi_n(x,\del_{\om'})
d\mu_{\om'}(x)dP(\om')\\
=&\int_\Om f(\xi_1\xi_2\cdots\xi_n(\mu_{\om'}\times\del_{\del_{\om'}})
dP(\om')\\
=&E\bigl( f(\xi_1\xi_2\cdots\xi_n(\tet_{n+1})|\xi_1\xi_2\cdots\xi_n\bigr)\\
=& E\bigl( f(\tet_1)|\xi_1\xi_2\cdots\xi_n\bigr) \to f(\tet_1)=
f(\mu_\om\times\del_{\mu_\om}),
\end{align*}
where the convergence in the last line follows from the martingale
convergence theorem. Since clearly a.s.
$\pi:(X\times M,\mu_\om\times\del_{\mu_\om}) \to (X,\mu_\om)$
is 1-1, we see that $\pi$ is an $m$-proximal extension.
This completes the proof of the theorem.
\end{proof}

\br

\begin{thm}\label{2.4}
If $(X,\mu)$ is an $m$-system with
maximal entropy (i.e. $h_m(X,\mu)=h_m(\Pi(G,m))$); then $(X,\mu)$
is standard and it admits the Poisson boundary $\Pi(G,m)$ as its
maximal proximal factor.
\end{thm}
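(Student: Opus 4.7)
The plan is to apply the structure theorem (Theorem \ref{2.3}) and then use the entropy characterization of Theorem \ref{NZ} to force both $\Pi(\Xcal)$ to be all of $\Pi(G,m)$ and the proximal cover to collapse onto $\Xcal$.

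First I would invoke Theorem \ref{2.3} to obtain the standard cover with an $m$-proximal extension $\pi\colon\Xcal^*\to\Xcal$ and a measure preserving extension $\sigma\colon\Xcal^*\to\Pi(\Xcal)$. Since $\Pi(\Xcal)$ is $m$-proximal, the universality of the Poisson boundary makes it a factor of $\Pi(G,m)$, so $h_m(\Pi(\Xcal))\le h_m(\Pi(G,m))$ by monotonicity of $m$-entropy under factor maps (a standard Jensen-inequality computation).

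Next I would chain the entropies:
\begin{equation*}
h_m(\Pi(G,m))=h_m(\Xcal)\le h_m(\Xcal^*)=h_m(\Pi(\Xcal))\le h_m(\Pi(G,m)),
\end{equation*}
where the second equality uses Theorem \ref{NZ}(2) for the measure preserving extension $\sigma$. Hence all four entropies coincide. Theorem \ref{NZ}(3) then identifies $\Pi(\Xcal)\cong\Pi(G,m)$, and Theorem \ref{NZ}(2) applied to $\pi$ (which now preserves entropy) shows that $\pi$ is simultaneously $m$-proximal \emph{and} measure preserving.

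The main technical step, which I expect to be the chief obstacle, is to verify that an extension which is both $m$-proximal and measure preserving is necessarily an isomorphism. Writing the disintegration $\mu^*=\int\mu^*_x\,d\mu(x)$, the measure preserving condition gives $g\mu^*_x=\mu^*_{gx}$ a.s., and a short computation (of the kind used in the proof of Theorem \ref{2.3}) yields the conditional measures in the form
\begin{equation*}
(\mu^*)_\omega=\int\mu^*_x\,d\mu_\omega(x).
\end{equation*}
Proximality of $\pi$ says that a.s. the map $\pi\colon(X^*,(\mu^*)_\omega)\to(X,\mu_\omega)$ is 1-1, which forces $\mu^*_x$ to be a point mass for $\mu_\omega$-a.e.\ $x$ for a.e.\ $\omega$; the barycenter equation \eqref{2} then propagates this to $\mu$-a.e.\ $x$, so $\pi$ admits a measurable section and is an isomorphism. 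Consequently $\Xcal\cong\Xcal^*$ is standard, with $\Pi(G,m)$ appearing as a proximal factor; maximality of this factor is automatic, since any $m$-proximal factor of $\Xcal$ is a factor of $\Pi(G,m)$ by universality, hence a factor of the one already exhibited.
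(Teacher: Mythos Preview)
Your proof is correct and follows essentially the same route as the paper's: invoke the standard cover, use the entropy chain to force $h_m(\Xcal^*)=h_m(\Xcal)$ and $h_m(\Pi(\Xcal))=h_m(\Pi(G,m))$, then apply Theorem~\ref{NZ} to conclude that $\pi$ is measure preserving (hence an isomorphism) and that $\Pi(\Xcal)\cong\Pi(G,m)$. The paper is terser---it simply asserts that an extension which is both $m$-proximal and measure preserving is an isomorphism (a fact it also uses without comment in the proof of Proposition~\ref{ 2.2})---whereas you spell out this step via the identity $(\mu^*)_\omega=\int\mu^*_x\,d\mu_\omega(x)$, which is a perfectly valid justification.
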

\begin{proof}
Let $\pi:X^*\to X$ be the standard cover of $(X,\mu)$,
so that in particular $\pi$ is a proximal extension. Since $\pi$
does not raise entropy it is a measure preserving extension
(theorem \ref{NZ}). Thus $\pi$ is an isomorphism and $X$ is a
standard system whose maximal proximal factor has maximal entropy.
Again theorem \ref{NZ} implies that this factor is isomorphic to
$\Pi(G,m)$.
\end{proof}

\br

\begin{thm}\label{stand}
Let $\Xcal=(X,\mu)$ be an $m$-system which admits a strict tower
of proximal and measure preserving extensions
$$
X \cdots \to X_{n+1} \to X_n \to \cdots \to X_2 \to X_1 \to X_0.
$$
Assume that $X_0$ is the maximal proximal factor of $\Xcal$,
that the proximal and measure preserving maps
alternate, and that each such map is maximal.
Thus $X_{2n+1}\to X_{2n}$ is measure preserving and
if $X \to Y \to X_{2n+1}\to X_{2n}$ is such that
$Y \to X_{2n}$ is also measure preserving then $Y =X_{2n+1}$.
Likewise
$X_{2n+2}\to X_{2n+1}$ is proximal and
if $X \to Y \to X_{2n+2}\to X_{2n+1}$ is such that
$Y \to X_{2n+1}$ is also proximal then $Y =X_{2n+2}$.
Let
\begin{equation}\label{pi}
\Pi(X) \cdots \to \Pi(X_{n+1}) \to
\Pi(X_n) \cdots \to \Pi(X_2) \to \Pi(X_1) \to \Pi(X_0)=X_0
\end{equation}
be the corresponding sequence of homomorphisms. (Since
for every $n$ the map $\phi: X_{2n+1}\to X_{2n}$ is measure preserving,
$\Pi(X_{2n+1})\to \Pi(X_{2n})$ is an isomorphism.)
If at any stage in this sequence we have that
$\Pi(X_{2n+2})\to \Pi(X_{2n+1})$ is an isomorphism,
then $X = X_{2n+1}$.
\end{thm}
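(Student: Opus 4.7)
Write $Z := X_{2n+1}$, $W := X_{2n+2}$, and $P := \Pi(W) = \Pi(Z)$ (identifying via the hypothesis), and form the standard covers $W^* := W \curlyvee P$ and $Z^* := Z \curlyvee P$ from Theorem \ref{2.3}. The plan is to pass through these covers, lift the comparison to $X$, and then appeal to the tower's maximality.

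\textbf{Step 1: Standard covers agree.} I claim $W^* \cong Z^*$. The natural factor map $W^* \to Z^*$, induced by $W \to Z$ on the first coordinate and the identity on $P$, is proximal: the composition $W^* \to Z^* \to Z$ coincides with $W^* \to W \to Z$, a composition of two proximal maps, so Proposition \ref{2.1}(2) forces $W^* \to Z^*$ to be proximal. The same map is measure preserving: both $W^* \to P$ and $Z^* \to P$ are measure preserving by the standard-cover structure, and Proposition \ref{2.1}(1) applied to $W^* \to Z^* \to P$ gives the conclusion. A map that is simultaneously proximal and measure preserving is an isomorphism, so $W^* \cong Z^*$.

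\textbf{Step 2: Lifting to $X$.} Since $W$ is a factor of $X$ and $P = \Pi(W)$ is a factor of $\Pi(X)$ (functoriality of $\Pi$), the standard cover $W^* = W \curlyvee P$ appears as a factor of $X^* = X \curlyvee \Pi(X)$. The induced map $X^* \to Z^*$ is measure preserving: $X^* \to \Pi(X) \to P$ is measure preserving (the second arrow because any factor map between proximal systems is measure preserving), and $Z^* \to P$ is measure preserving, so Proposition \ref{2.1}(1) applied to $X^* \to Z^* \to P$ forces $X^* \to Z^*$ to be measure preserving. Since measure preserving extensions preserve the quasifactor $\Pi$, we obtain $\Pi(X) = \Pi(Z^*) = P$.

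\textbf{Step 3 (main obstacle): Conclude.} To deduce $X = X_{2n+1}$ from $\Pi(X) = P$, the plan is to upgrade $X^* \to Z^*$ from measure preserving to an isomorphism. Once proximality of $X^* \to Z^*$ is established, Proposition \ref{2.1}(2) applied to $X^* \to X \to Z$ (with $X^* \to X$ already proximal) forces $X \to Z$ to be proximal, and the maximality of $X_{2n+2}$ as the maximal proximal extension of $X_{2n+1}$ in $X$ --- applied with $Y := X$ in the defining universal property --- yields $X = X_{2n+2}$. Combined with the strictness of the tower and the vanishing of the $\Pi$-increment at this step, a further argument reduces the proximal step $X_{2n+2} \to X_{2n+1}$ to the identity, giving $X = X_{2n+1}$. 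The hard step is establishing proximality of $X \to Z$ from $\Pi(W) = \Pi(Z)$; this requires carefully combining the rigidifying effect of the identification on the conditional measures $\mu_\om$ of $X$ over those of $Z$ with the maximality conditions of the tower.
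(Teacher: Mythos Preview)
Your Step 1 is correct and is exactly the heart of the paper's proof: using Proposition \ref{2.1} on both sides of the diagram to show that the induced map of standard covers $X_{2n+2}\vee\Pi(X_{2n+2}) \to X_{2n+1}\vee\Pi(X_{2n+1})$ is simultaneously proximal and measure preserving, hence an isomorphism.

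The divergence, and the error, comes afterward. In Step 2 your parenthetical claim that ``any factor map between proximal systems is measure preserving'' is false. For $G=SL(n,\R)$ with $n\ge 3$ and $K$-invariant admissible $m$, the natural map from the full flag manifold $\Pi(G,m)$ onto any proper partial flag manifold $G/Q$ is a homomorphism of proximal $m$-systems; were it measure preserving, Theorem \ref{NZ}(2) would force equal $m$-entropies, contradicting Theorem \ref{NZ}(3). With that claim gone, your argument that $X^*\to Z^*$ is measure preserving collapses, and the conclusion $\Pi(X)=P$ is unsupported. Step 3 is then, as you yourself say, incomplete.

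The paper's route after Step 1 is much shorter and never passes through $X$ or $X^*$. From the isomorphism $X_{2n+2}\vee\Pi(X_{2n+2}) \cong X_{2n+1}\vee\Pi(X_{2n+1})$ it deduces directly that the map $X_{2n+2}\to X_{2n+1}$ is itself an isomorphism. Since the tower is assumed strict, there can be no genuine $(2n+2)$-nd stage; combined with the maximality hypotheses, the whole tower above $X_{2n+1}$ collapses and $X=X_{2n+1}$. In other words, after your Step 1 the proof is essentially done: rather than lifting to $X^*$ and chasing $\Pi(X)$, you should drop back down from the standard covers to $X_{2n+2}\to X_{2n+1}$ and invoke strictness and maximality.
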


\begin{proof}
For convenience we write $m=2n+1$. In the diagram
\begin{equation*}
\xymatrix
{
& X_{m+1}\vee\Pi(X_{m+1})\ar[dd] \ar[dl]_{\prox} \ar[dr]^{\mpr}  &  \\
X_{m+1}\ar[dd]_{\prox} &  & \Pi(X_{m+1})\ar[dd]^{\phi}\\
&   X_m\vee \Pi(X_m)\ar[dl]_{\prox}\ar[dr]^{\mpr} & \\
X_m  &                             & \Pi(X_m)
}
\end{equation*}
by assumption, $\phi$ is an isomorphism and therefore all
the maps on the right of the central vertical arrow
$X_{m+1}\vee\Pi(X_{m+1}) \to X_m\vee \Pi(X_m)$
are measure preserving maps. On the other hand all the arrows
on the left of this arrow are proximal maps. We conclude that
$X_{m+1}\vee\Pi(X_{m+1}) \to X_m\vee \Pi(X_m)$ is both
measure preserving and proximal, hence an isomorphism.
However this implies that also $X_{m+1} \to X_m$ is
an isomorphism. Since we assumed that at each stage the
extension is maximal we now realize that the whole tower
above $X_{m}$ collapses, i.e. $X=X_m=X_{2n+1}$.
\end{proof}

\br

\begin{cor}
For $G=SL(n,\R)$ and $K$-invariant admissible $m$,
every strict maximal tower is of height $\le n$.
\end{cor}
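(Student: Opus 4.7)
The plan is to combine Theorem \ref{stand} with the classification of $m$-proximal $G$-systems for $G = SL(n,\R)$ with admissible $K$-invariant $m$ that is recalled in Example~{\bf 2}: every $m$-proximal $G$-system is a homogeneous space $G/Q$ with $Q\subseteq G$ a parabolic subgroup, and $G$-equivariant factor maps between such systems correspond to reverse inclusions of parabolics.

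First I would unpack what a strict maximal tower forces on the Poisson covers. By Theorem \ref{stand}, the tower cannot continue past any stage $m = 2k+1$ at which the map $\Pi(X_{m+1}) \to \Pi(X_m)$ is an isomorphism. Since $\Pi$ is unchanged by measure-preserving extensions, $\Pi(X_{2k+1}) = \Pi(X_{2k})$ for every $k$; the Poisson cover can only grow at the proximal steps $X_{2k+2} \to X_{2k+1}$. Hence a strict maximal tower produces a strictly increasing chain of proximal $G$-systems
$$\Pi(X_0) \subsetneq \Pi(X_2) \subsetneq \Pi(X_4) \subsetneq \cdots,$$
and the length of the tower is controlled by the length of this chain.

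Next, using the Furstenberg classification recalled in Example~{\bf 2}, each $\Pi(X_{2k})$ is of the form $G/Q_k$ for some parabolic $Q_k$, and the existence of $G$-equivariant surjections $\Pi(X_{2k+2})\to\Pi(X_{2k})$ forces $Q_0 \supseteq Q_1 \supseteq Q_2 \supseteq \cdots$, with strict inclusions matching the strict inclusions in the Poisson chain. The remaining step is purely combinatorial: the standard parabolic subgroups of $SL(n,\R)$ containing a fixed minimal parabolic $P$ are parametrized by subsets of the set of $n-1$ simple roots, so they form a Boolean lattice on $n-1$ elements, and a maximal chain in this lattice has exactly $n$ members. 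This caps the length of $(Q_k)$ at $n$, which in turn bounds the height of the tower.

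The only real obstacle is the combinatorial bookkeeping: identifying ``height'' with the number of distinct Poisson-cover levels $\Pi(X_{2k})$ along the tower, and verifying that the maximality clauses of Theorem \ref{stand} (which demand that both the proximal and the measure-preserving extensions at each stage be maximal of their type) do not smuggle in additional steps beyond those already counted by the parabolic chain. Once this is done, the bound $n$ on chains of parabolics in $SL(n,\R)$ yields the corollary immediately.
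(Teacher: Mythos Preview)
Your proposal is correct and follows essentially the same approach as the paper's own proof: both arguments observe that the sequence of Poisson covers $\Pi(X_k)$ is governed by a nested chain of parabolic subgroups of $SL(n,\R)$, and that such a chain has length at most $n$. Your write-up is simply more detailed, making explicit the role of Theorem~\ref{stand} in forcing strict growth of $\Pi(X_{2k})$ and spelling out the combinatorics of parabolics via the $n-1$ simple roots, whereas the paper compresses all of this into a couple of sentences.
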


\begin{proof}
As was shown in \cite{F} the Poisson $(G,m)$-space
$\Pi(G,m)$ is the flag manifold on $\R^n$.
Since every proximal $G$-system is a factor of $\Pi(G,m)$,
every sequence of the form \eqref{pi} is defined by a nested
sequence of parabolic subgroups, whence of length at most $n$.
\end{proof}

\br

{\bf Examples:} {\bf 10.}\ Applying the construction of the
structure theorem to example {\bf 3.} in section 1, we obtain the
following description for the $m$-system $(X^*,\mu^*)=(X\times
M,\mu \curlyvee \nu)$. $X^*$ can be taken as the subset of $X\times
X$ consisting of all ( {\it ordered}\ ) pairs $((\ep,z),(\bar\ep,
z')),\ \ep\in\{0,1\},\ z,z'\in Z$, with the diagonal action
$g((\ep,z),(\bar\ep, z'))= (g(\ep,z),g(\bar\ep, z'))$. The measure
$\mu^*$ is then given by
$$
\frac 12 (\del_0 +\del_1)\times\eta\times\eta.
$$

\br

{\bf 11.}\ As we have seen (proposition \ref{prop2.2}), for every $m$-system
$(X,\mu)$ there is a uniquely defined maximal proximal factor. This
is not always the case with respect to measure preserving factors.
We produce next an example of a product system $(X,\mu)=(Z\times
Y,\eta\times \nu)$ where $(Z,\eta)$ is $m$-proximal and $(Y,\nu)$
is measure preserving---so that $(X,\mu)$ is standard---with a
factor $(Y',\nu')$ which is also measure preserving but such that
the factor $Y\vee Y'$ of $X$ is not measure preserving.

We let $G=F_2$, the free group on two generators $a$ and $b$,
$m=\frac 14(\del_a+\del_b+\del_{a^{-1}}+\del_{b^{-1}})$. Let $Z$ be
the Poisson boundary $\Pi(F_2,m)$ which we can take as the space of
right infinite reduced words on the letters $\{a,a^{-1},b,b^{-1}\}$
with the natural Markov measure $\eta$ as in example (4) above. The
system $(Y,\nu)$ will be the Bernoulli system $Y=\{0,1\}^{F_2}$
with product measure $\nu=\{\frac 12,\frac 12\}^{F_2}$. Thus $\nu$
is an invariant measure under the natural action of $F_2$ on $Y$ by
translations. Clearly $\mu=\eta\times\nu=\eta\curlyvee\nu$ is
$m$-stationary, so that $(X,\mu)$ is an $m$-system.

Next let $A$ be the subset $\{z\in Z: a \ {\text {\rm is\ the\
first\ letter\ of\ }} z\}$, and let $\phi:Z\to Y$ be the continuous
function defined by $(\phi(z))_g=1_A(gz)$. We observe that the map
$\Phi:X\to Y$ defined by $\Phi(z,y)=z+y  \pmod  1 $ is an equivariant
continuous map. Let $Y'=\Phi(X)$ and $\nu'=\Phi_*(\mu)$. It is now
easy to check that $(Y',\nu')$ is a measure preserving factor of
the $M$-system $(X,\mu)$, which is isomorphic to the Bernoulli
system $(Y,\nu)$. However it is also clear that the factor $Y\vee
Y'$ of $(X,\mu)$ is a non-measure preserving $m$-system. In fact
$Y\vee Y'$ admits the non-trivial proximal factor $Z'=\phi(Z)$.

\br

%eee
\begin{rmk}
For ergodic probability measure preserving transformations
there is a more satisfactory structure theorem (due to
Furstenberg \cite{Fur4}, \cite{Fur5} and independently to Zimmer \cite{Z1},
\cite{Z2}) according to which every such system is canonically
presented as a weakly mixing extension of a measure-distal
system (the latter is defined as a tower, possibly of infinite height, 
of compact extensions).
In topological dynamics there is an analogous theorem for
a minimal dynamical system $(X,G)$ (see \cite{EGS}, \cite{V}).
However, as in our theorem \ref{2.3},
one is forced in this setting to first associate with $X$ a proximal
extension $X^* \to X$ so that only $X^*$ has the required structure of 
a weakly mixing extension of a PI-system (where the latter
is a tower of alternating proximal and isometric extensions).
In \cite{Gl4} there is an example of a minimal dynamical system
$(X,T)$ which does not admit nontrivial factors that are either proximal
or incontractible (this is the analogue of a measure preserving system
in topological dynamics). We do not know how to construct a similar
example for stationary systems. Such an example will show that
in some sense one can not do better than what one gets in theorem \ref{2.3}.
\end{rmk}

\begin{prob}
Are there a group $G$, a probability measure $m$ on $G$, and
an ergodic $m$-stationary system $\mathcal{X}=(X,\mu,G)$ such that
$\mathcal{X}$ does not admit nontrivial factors that are either proximal
or measure preserving?
\end{prob}

\section{Nevo-Zimmer theorem in an abstract setup}

{\bf Definitions:}
\begin{enumerate}
\item
Notations as in theorem \ref{2.3}, we say that the quasifactor $\Pi
(\Xcal)=(M,P^*)$ is {\bf mixingly embedded} in the $m$-system
$\Xcal=(X,\mu)$, if the measure preserving extension
$\sig:\Xcal^*\to \Pi(\Xcal)$ is a mixing extension;\ i.e. if for every
$f\in L_\infty(\mu^*)$ and every sequence $g_n\to\infty$ in $G$,
$$
w^*{\text -}\lim g_n (f- E^{\Pi(\Xcal)}f)=0,
$$
where $ E^{\Pi(\Xcal)}f$ is the conditional expectation of $f$
with respect to the factor $\Pi(\Xcal)$.
\item
For an  $m$-system $(X,\mu)$ and a subset $V$ of $L_\infty (\mu)$,
let
$$
\Fcal(V)=\{w^*{\text -}\lim g_n f: f\in V,
\ g_n\to\infty\},
$$
the set of all weak$^*$ limit points of sequences $g_nf$ where
$f\in V$ and $g_n\to \infty$.
Let $\Fcal (V)$ be the smallest
$\sig$-algebra with respect to which all members of $\Fcal(V)$ are
measurable. Call the $m$-system $(X,\mu)$\ {\bf reconstructive}
with respect to $V$ if $\Fcal (V)$ is the full $\sig$-algebra of
measurable sets on $X$.
\end{enumerate}

\br

\begin{thm}\label{3.1}
Let $\Xcal=(X,\mu)$ be an $m$-system such that
\begin{enumerate}
\item
The canonical $m$-proximal quasifactor $\Pi(\Xcal)=(M,P^*)$ is
mixingly embedded in $\Xcal$.
\item
$\Pi(\Xcal)$  is a reconstructive $m$-system with respect to the
subspace
$$
V=E^{\Pi(\Xcal)}(C(X)).
$$
\end{enumerate}
Then the $m$-proximal quasifactor $\Pi(\Xcal)$ is actually a
factor of $\Xcal$.
\end{thm}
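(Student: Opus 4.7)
The plan is to work inside the standard cover $\Xcal^*=\Xcal\curlyvee\Pi(\Xcal)$ from Theorem~\ref{2.3}, with its two projections $\pi:\Xcal^*\to\Xcal$ ($m$-proximal) and $\sig:\Xcal^*\to\Pi(\Xcal)$ (measure preserving). Exhibiting $\Pi(\Xcal)$ as a factor of $\Xcal$ is the same as showing that $\sig$ factors through $\pi$, i.e.\ that the $\sig$-algebra $\sig^{-1}(\Bcal_M)$ is contained in $\pi^{-1}(\Bcal_X)$ modulo $\mu^*$; this is what I would aim to establish, after which the factor map $\Xcal\to\Pi(\Xcal)$ is produced by a standard measure-theoretic factorization.

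The first step I would verify is that the pullback subspace $\pi^*L_\infty(X,\mu)\subseteq L_\infty(X^*,\mu^*)$ is $G$-invariant (because $\pi$ is $G$-equivariant) and weak$^*$-closed. Weak$^*$-closedness comes from the fact that this subspace is the range of the conditional expectation $E^{\Xcal}$, which is weak$^*$-continuous on $L_\infty$ as the adjoint of the corresponding contractive projection on $L_1$. Consequently, for every $\varphi\in C(X)$ (viewed on $X^*$ via $\pi$), all weak$^*$ cluster points of $\{g_n\varphi\}$ lie again in $\pi^*L_\infty(X,\mu)$.

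Next I would activate the mixing hypothesis~(1). Setting $F:=E^{\Pi(\Xcal)}\varphi\in V$, mixing yields $w^*\text{-}\lim g_n(\varphi-F)=0$, so any subsequential weak$^*$ limit $u$ of $g_nF$ is simultaneously a weak$^*$ limit of $g_n\varphi$, and hence by the previous step lies in $\pi^*L_\infty(X,\mu)$. Letting $\varphi$ range over $C(X)$, this shows $\Fcal(V)\subseteq\pi^*L_\infty(X,\mu)$. In particular, the $\sig$-algebra generated on $X^*$ by $\Fcal(V)$ is a sub-$\sig$-algebra of $\pi^{-1}(\Bcal_X)$.

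Finally, hypothesis~(2) --- the reconstructive property --- identifies the $\sig$-algebra generated by $\Fcal(V)$ with the full $\sig$-algebra $\sig^{-1}(\Bcal_M)$, so that $\sig^{-1}(\Bcal_M)\subseteq\pi^{-1}(\Bcal_X)$ modulo $\mu^*$ and $\sig$ descends through $\pi$. The chief delicate point, I expect, will be the verification that $\pi^*L_\infty(X,\mu)$ is weak$^*$-closed --- a standard duality fact, but one that must be handled carefully because $\mu^*$ disintegrates non-trivially over $X$ and is not a product measure. Once that piece is in place, the mixing and reconstructive hypotheses combine essentially formally to force every $\sig$-measurable function to be $\pi^{-1}(\Bcal_X)$-measurable, completing the proof.
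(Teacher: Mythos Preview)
Your proposal is correct and follows essentially the same route as the paper: work in the standard cover $\Xcal^*$, use the mixing hypothesis to identify weak$^*$ limits of $g_n\varphi$ (for $\varphi\in C(X)$) with weak$^*$ limits of $g_n E^{\Pi(\Xcal)}\varphi$, conclude these lie in the weak$^*$-closed subspace $L_\infty(\mu)\cap L_\infty(P^*)$, and then apply reconstructiveness to get $L_\infty(P^*)\subset L_\infty(\mu)$. Your treatment is in fact slightly more explicit than the paper's about why $\pi^*L_\infty(X,\mu)$ is weak$^*$-closed; the paper simply asserts that $\hat f$ lands in ``the $w^*$-closed subspace $L_\infty(\mu)\cap L_\infty(P^*)$'' and moves on.
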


\begin{proof}
Consider an arbitrary continuous function $f$ on $X$,
$f\in C(X)\subset L_\infty (\mu)\subset
L_\infty(X\times M(X),\mu^*)$, and
the corresponding function
$\tilde f\in L_\infty (P^*)$ on $M(X)$
defined by:
$$
\tilde f(\mu_\om)=\int_X f(x) d\mu_\om= E^{\Pi(\Xcal)}f.
$$
By assumption (1), for every sequence
$g_n\to\infty$ in $G$ for which $w^*{\text -}\lim g_n  \tilde f$ exists,
we have
\begin{equation}\label{equality}
\hat f=w^*{\text -}\lim g_n
f= w^*{\text -}\lim g_n  \tilde f,
\end{equation}
hence $\hat f$ is in the $w^*$-closed subspace $L_\infty(\mu) \cap
L_\infty (P^*)$. 
%Taking $g_n =\eta_n(\om)=\xi_1(\om)\xi_2(\om)\cdots\xi_n(\om)$,
%we have for every $h \in L_1(\mu^*)$, by Lebesgue dominated
%convergence theorem,
%\begin{gather*}
%\lim_{n\to \infty}\int_{X^*}
%\tilde f(\xi_1(\om)\xi_2(\om)\cdots\xi_n(\om)\mu_{\om'})h(x,\mu_{\om'})
%\, d\mu^*(x,\mu_{\om'})\\
%=\tilde f (\mu_\om)\int_{X^*}h(x,\mu_{\om'})\, d\mu^*(x,\mu_{\om'}).
%\end{gather*}
%i.e. $w^*{\text -}\lim \xi_1(\om)\xi_2(\om)\cdots\xi_n(\om)  
%\tilde f \equiv  \tilde f (\mu_\om)$, $\om$-a.s. 
%Thus by \ref{equality} we deduce that $\om$-a.s. 
%\begin{equation}
%w^*{\text -}\lim \xi_1(\om)\xi_2(\om)\cdots\xi_n(\om)  
% f \equiv  \tilde f (\mu_\om), 
%\end{equation}

On the other hand, by assumption (2), with the
subspace $V=\{\tilde f:f\in C(X)\}= E^{\Pi(\Xcal)}(C(X))$, the
smallest $\sig$-algebra with respect to which all the functions:
$$
\{\hat f= w^*{\text -}\lim g_n
\tilde f: f\in C(X),\ g_n\to\infty\}
$$
are measurable is the full $\sig$-algebra of measurable sets on
$\Pi(\Xcal)$. It thus follows that with respect to $\mu^*$,
$L_\infty(P^*)\subset L_\infty(\mu)$ and the proof is complete.
\end{proof}

\br

\begin{cor}
Let $\Xcal=(X,\mu)$ be an $m$-system such that
the canonical $m$-proximal quasifactor $\Pi(\Xcal)=(M,P^*)$ is
mixingly embedded in $\Xcal$. Then for every $f \in
L_\infty(X,\mu)$, for a.e. $\om$
\begin{equation*}
w^*{\text -}\lim \xi_1(\om)\xi_2(\om)\cdots\xi_n(\om)  
 f \equiv  \tilde f (\mu_\om). 
\end{equation*}
\end{cor}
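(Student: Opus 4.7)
The plan is to combine the mixing hypothesis with the $m$-proximality of the quasifactor $\Pi(\Xcal)$ established in Proposition~\ref{1.1}. Set $\alpha_n(\om):=\xi_1(\om)\cdots\xi_n(\om)$; admissibility of $m$ guarantees $\alpha_n(\om)\to\infty$ in $G$ for $P$-a.e.\ $\om$. For such $\om$, applying the mixing hypothesis to $f\in L^\infty(\mu)\subset L^\infty(\mu^*)$ along the divergent sequence $\bigl(\alpha_n(\om)\bigr)$, and using the intertwining $\alpha_n\tilde f = E^{\Pi(\Xcal)}(\alpha_n f)$ (valid because $E^{\Pi(\Xcal)}$ commutes with the $G$-action), one obtains
\begin{equation*}
w^*\text{-}\lim_n\bigl(\alpha_n(\om)f - \alpha_n(\om)\tilde f\bigr) = 0 \qquad \text{in } L^\infty(\mu^*).
\end{equation*}
The problem therefore reduces to showing that $w^*\text{-}\lim_n \alpha_n(\om)\tilde f$ equals the constant $\tilde f(\mu_\om)$.

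To carry this out I would invoke Proposition~\ref{1.1}: the quasifactor $\Pi(\Xcal)=(M(X),P^*)$ is itself $m$-proximal, so its own conditional measures are the point-masses $\delta_{\mu_\om}$. Using the identity $\mu_\om = \alpha_n(\om)\mu_{\sig^n\om}$, in which $\mu_{\sig^n\om}$ is independent of $\Fcal_n:=\sigma(\xi_1,\ldots,\xi_n)$ and distributed as $P^*$, Doob's martingale convergence theorem gives, for every bounded Borel function $G$ on $M(X)$,
\begin{equation*}
\int_{M(X)} G\bigl(\alpha_n(\om)\nu\bigr)\,dP^*(\nu) = E\bigl[G(\mu_\om)\mid\Fcal_n\bigr] \longrightarrow G(\mu_\om) \quad\text{a.s.}
\end{equation*}
Plugging in $G=\tilde f$ and $G=\tilde f^{\,2}$ (both bounded since $f\in L^\infty$) and expanding the square yields $\bigl\|\tilde f\bigl(\alpha_n(\om)\,\cdot\bigr) - \tilde f(\mu_\om)\bigr\|_{L^2(P^*)} \to 0$ for $P$-a.e.\ $\om$.

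I would then splice the two steps together via the disintegration $\mu^* = \int(\mu_{\om'}\times\delta_{\mu_{\om'}})\,dP(\om')$. For $\phi\in L^\infty(\mu)$ lifted to $L^\infty(\mu^*)$ by $(x,\nu)\mapsto\phi(x)$ and $\hat\phi(\nu):=\int\phi\,d\nu$,
\begin{align*}
\int_{X^*}\alpha_n(\om)\tilde f\cdot\phi\,d\mu^*
&= \int_{M(X)}\tilde f\bigl(\alpha_n(\om)\nu\bigr)\hat\phi(\nu)\,dP^*(\nu) \\
&\longrightarrow \tilde f(\mu_\om)\int\hat\phi\,dP^* = \tilde f(\mu_\om)\int\phi\,d\mu,
\end{align*}
the limit following from the $L^2$-convergence above combined with $|\tilde f|\le\|f\|_\infty$. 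With the first step this gives $\int\alpha_n(\om)f\cdot\phi\,d\mu \to \tilde f(\mu_\om)\int\phi\,d\mu$ for every $\phi\in L^\infty(\mu)$; the uniform bound $\|\alpha_n f\|_\infty\le\|f\|_\infty$ extends the convergence to all $\phi\in L^1(\mu)$, yielding the claimed weak*-convergence. I expect the main obstacle to be the passage from the weak convergence $\alpha_n(\om)_*P^*\to\delta_{\mu_\om}$ (which a priori tests only against continuous $G$ on $M(X)$) to convergence against the merely Borel function $\tilde f$; Doob's identity is the technical trick that circumvents this.
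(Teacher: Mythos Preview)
Your argument is correct and follows the paper's two-step outline: use the mixing hypothesis (the paper's equation (\ref{equality})) to reduce from $f$ to $\tilde f=E^{\Pi(\Xcal)}f$, then show that $w^*\text{-}\lim_n \alpha_n(\om)\tilde f\equiv \tilde f(\mu_\om)$ for a.e.\ $\om$. For this second step the paper simply invokes Lebesgue's dominated convergence theorem, whereas your martingale identity $\int_{M(X)} G(\alpha_n(\om)\nu)\,dP^*(\nu)=E[G(\mu_\om)\mid\Fcal_n]$ combined with the first/second-moment trick makes the passage to the merely bounded Borel function $\tilde f$ fully explicit; the obstacle you flag at the end is exactly the point the paper leaves implicit, and your treatment of it (in the spirit of the martingale computation in the proof of Theorem~\ref{2.3}) is in fact more careful than the paper's.
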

\begin{proof}
In the proof of theorem \ref{3.1}
taking $g_n =\eta_n(\om)=\xi_1(\om)\xi_2(\om)\cdots\xi_n(\om)$
we have, for every $h \in L_1(\mu^*)$ by Lebesgue's dominated
convergence theorem,
\begin{gather*}
\lim_{n\to \infty}\int_{X^*}
\tilde f(\xi_1(\om)\xi_2(\om)\cdots\xi_n(\om)\mu_{\om'})h(x,\mu_{\om'})
\, d\mu^*(x,\mu_{\om'})\\
=\tilde f (\mu_\om)\int_{X^*}h(x,\mu_{\om'})\, d\mu^*(x,\mu_{\om'});
\end{gather*}
i.e. $w^*{\text -}\lim \xi_1(\om)\xi_2(\om)\cdots\xi_n(\om)  
\tilde f \equiv  \tilde f (\mu_\om)$, $\om$-a.s. 
In view of (\ref{equality}) we deduce that $\om$-a.s. 
\begin{equation*}
w^*{\text -}\lim \xi_1(\om)\xi_2(\om)\cdots\xi_n(\om)  
 f \equiv  \tilde f (\mu_\om), 
\end{equation*}

\end{proof}

{\bf Example:} Let $T$ and $S$ be two discrete countable groups,
$m_S$ and $m_T$ probability measures on $S$ and $T$ respectively
such that the corresponding Poisson spaces $\Pi(S,m_S)$ and
$\Pi(T,m_T)$ are nontrivial. We form the product group $G=T\times
S$ and the product measure $m=m_T\times m_S$.

\begin{thm}\label{3.2}
For $G = T \times S$ as above the Poisson spaces for the couples
$(G,m), (T,m_T)$ and $(S,m_S)$ satisfy:
$$
\Pi(G,m)=\Pi(T,m_T)\times \Pi(S,m_S).
$$
\end{thm}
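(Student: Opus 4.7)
The plan is to verify that $(\Pi(T,m_T) \times \Pi(S,m_S),\,\nu_T \times \nu_S)$, with the product $G = T \times S$ action, is an $m$-proximal $G$-system and satisfies the universal property that characterises the Poisson boundary. Denote $\Pi_T = \Pi(T,m_T)$ with stationary measure $\nu_T$, and analogously $\Pi_S, \nu_S$; we seek a $G$-equivariant isomorphism $\Pi(G,m) \cong \Pi_T \times \Pi_S$.

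First I would verify $m$-proximality of the product. The convolution decomposes as $m * (\nu_T \times \nu_S) = (m_T * \nu_T) \times (m_S * \nu_S) = \nu_T \times \nu_S$, so $\nu_T \times \nu_S$ is $m$-stationary. Identifying $\Omega = G^{\N}$ with $\Omega_T \times \Omega_S$ (where $\Omega_T = T^{\N}$, $\Omega_S = S^{\N}$), the samples $\om = (\om_T, \om_S)$ have independent $T$- and $S$-components, and the random walk factorises as $\xi_i(\om) = (\xi_i^T(\om_T), \xi_i^S(\om_S))$. Hence
\begin{equation*}
\xi_1 \cdots \xi_n (\nu_T \times \nu_S) = (\xi_1^T \cdots \xi_n^T \nu_T) \times (\xi_1^S \cdots \xi_n^S \nu_S) \longrightarrow \delta_{z_T(\om_T)} \times \delta_{z_S(\om_S)}
\end{equation*}
almost surely, where $z_T(\om_T)$ and $z_S(\om_S)$ are the a.s.\ limit points coming from the factor Poisson boundaries. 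The conditional measures are thus point masses, so the product is $m$-proximal.

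Next I would establish universality. Since $\Pi(G,m)$ is the universal $m$-proximal $G$-system, there is a canonical factor map $\pi:\Pi(G,m) \to \Pi_T \times \Pi_S$, and the goal is to show $\pi$ is an isomorphism. The cleanest route is via Theorem~\ref{NZ}(3): for two $m$-proximal systems, equality of Furstenberg entropies forces isomorphism. Since the Radon--Nikodym cocycle factorises, $\frac{d(t,s)(\nu_T\times \nu_S)}{d(\nu_T\times\nu_S)}(x,y) = \frac{dt\nu_T}{d\nu_T}(x)\cdot \frac{ds\nu_S}{d\nu_S}(y)$, direct integration gives
\begin{equation*}
h_m(\Pi_T \times \Pi_S) = h_{m_T}(\Pi_T) + h_{m_S}(\Pi_S).
\end{equation*}
One then argues that $h_m(\Pi(G,m))$ equals the same sum and invokes Theorem~\ref{NZ}(3). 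Alternatively one can try a direct universality argument: for any $m$-proximal $G$-system $(X,\mu)$, consider the boundary map $\om \mapsto x(\om)$ with $\delta_{x(\om)} = \lim_n \xi_1\cdots\xi_n\mu$ and show that, modulo null sets, $x(\om_T,\om_S)$ depends only on the pair $(z_T(\om_T), z_S(\om_S))$; this descends to the desired $G$-map $\Pi_T\times\Pi_S \to X$.

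The hard part is the universality step. Either route requires the following key structural fact: for a random walk on a product group driven by a product measure, all asymptotic information is carried by the two coordinate walks. This can be phrased as additivity of Furstenberg entropy across products, as the assertion that the tail $\sig$-algebra of $\eta_n = (\eta_n^T,\eta_n^S)$ is generated by the tail $\sig$-algebras of $\eta_n^T$ and $\eta_n^S$ separately, or equivalently that every bounded $m$-harmonic function on $G$ arises from an $L^\infty$ function on $\Pi_T\times\Pi_S$ via the Poisson integral $(t,s)\mapsto \int \tilde f(tx,sy)\,d\nu_T(x)d\nu_S(y)$. This splitting property is the crux of the theorem, and the point at which the independence of the coordinate walks must be used in a non-trivial way.
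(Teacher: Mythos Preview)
Your proposal is correct and follows the same route as the paper: show that $\Pi_T\times\Pi_S$ with the product measure is an $m$-proximal $G$-system (hence a factor of $\Pi(G,m)$), compute its Furstenberg entropy as $h_{m_T}(\Pi_T)+h_{m_S}(\Pi_S)$ via the factorisation of the Radon--Nikodym cocycle, and invoke Theorem~\ref{NZ} to upgrade the factor map to an isomorphism. The paper establishes proximality slightly differently---by noting that each factor $\Pi_T$, $\Pi_S$ is proximal as a $G$-system and that their $m$-join $\eta_T\curlyvee\eta_S$ equals the product $\eta_T\times\eta_S$---but this comes to the same thing as your direct computation of the conditional measures.

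The one place where you are more cautious than the paper is the step you call ``the hard part'': the paper simply asserts that ``the entropy of both systems is $h_m(\Pi(T,m_T),\eta_T)+h_m(\Pi(S,m_S),\eta_S)$'' and moves on. Your instinct that this needs justification for $\Pi(G,m)$ itself is sound; the implicit input is that the Furstenberg entropy of the Poisson boundary coincides with the Avez asymptotic entropy $\lim_n \frac{1}{n}H(m^{*n})$, which is additive for product measures since $m^{*n}=m_T^{*n}\times m_S^{*n}$. The remark immediately following the paper's proof in fact points to the Kaimanovich--Vershik description (the Poisson boundary as the space of ergodic components of the time shift on path space) as an alternative route, which is exactly your ``tail $\sigma$-algebra splits'' formulation.
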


\begin{proof}
Clearly the systems $\Pi(T,m_T)$ and $\Pi(S,m_S)$ can be viewed as
$G$ $m$-systems and as such they are proximal. Thus these systems
are factors of the $m$-system $\Pi(G,m)$. It is now easy to check
that if $\eta_T$ and $\eta_S$ are the $m$-stationary measures on
$\Pi(T,m_T)$ and $\Pi(S,m_S)$ respectively then the measure
$\eta_T\curlyvee\eta_S=\eta_T\times\eta_S$. Whence
$\Pi(T,m_T)\times \Pi(S,m_S)$ is a factor of the system
$\Pi(G,m)$. Since the entropy of both systems is
$h_m(\Pi(T,m_T),\eta_T) + h_m(\Pi(S,m_S),\eta_S)$ we 
%eee
can now apply
theorem \ref{NZ} to conclude that $\Pi(G,m)=\Pi(T,m_T)\times
\Pi(S,m_S)$.
\end{proof}

\begin{rmk}
Another proof of this fact follows directly from the characterization
of the Poissson boundary of $(G,m)$ as the space of ergodic
components of the time shift in the path space of the random walk
due to Kaimanovich and Vershik, \cite{KV}.
\end{rmk}

\br

\begin{lem}\label{3.3}
Let $(X,\Bcal,\mu), (Y,\Fcal ,\nu)$ be
two probability spaces, $\Acal $ a sub-$\sig$-algebra of $\Fcal$
and $f\in L_\infty(X\times Y,\mu\times \nu)$. If for $\mu$ a.e.
$x\in X$ the function $f_x(y)=f(x,y)$ is $\Acal $ measurable,
then $f$ is $\Bcal\times \Acal $ measurable.
\end{lem}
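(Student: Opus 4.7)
The plan is to show that $f$ equals $g := E[f \mid \Bcal \times \Acal]$ almost everywhere; since $g$ is by construction $\Bcal \times \Acal$-measurable, this gives the conclusion.

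The heart of the argument is a fiberwise description of $g$: for $\mu$-a.e.\ $x$ one should have
$$g(x,\cdot) = E_\nu[f_x \mid \Acal] \quad \text{in } L^\infty(\nu),$$
where $E_\nu[\,\cdot\, \mid \Acal]$ denotes the one-variable conditional expectation on $(Y,\Fcal,\nu)$. To establish this I would introduce the candidate function $\tilde g(x,y) := E_\nu[f_x \mid \Acal](y)$, which is defined for $\mu$-a.e.\ $x$ since $f_x \in L^\infty(\nu)$ by Fubini. For $f = \chi_{A \times B}$ with $A \in \Bcal$ and $B \in \Fcal$, one has $\tilde g(x,y) = \chi_A(x)\, E_\nu[\chi_B \mid \Acal](y)$, which is manifestly $\Bcal \times \Acal$-measurable. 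A monotone class argument, using closure under linear combinations and bounded monotone limits, then extends joint $\Bcal \times \Acal$-measurability of $\tilde g$ to every bounded $\Bcal \times \Fcal$-measurable $f$.

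With measurability in hand, a direct Fubini computation shows $\tilde g$ satisfies the defining integral identity of the conditional expectation on rectangles in $\Bcal \times \Acal$: for $A \in \Bcal$ and $B \in \Acal$,
$$\int_{A \times B} \tilde g\, d(\mu \times \nu) = \int_A \int_B E_\nu[f_x \mid \Acal]\, d\nu\, d\mu(x) = \int_A \int_B f_x\, d\nu\, d\mu(x) = \int_{A \times B} f\, d(\mu \times \nu).$$
Extending by the monotone class theorem and invoking uniqueness of conditional expectation, $\tilde g = g$ almost everywhere. The hypothesis now closes the argument: for $\mu$-a.e.\ $x$ the section $f_x$ is already $\Acal$-measurable, so $E_\nu[f_x \mid \Acal] = f_x$ $\nu$-a.e., whence $g(x,y) = \tilde g(x,y) = f(x,y)$ for $\mu \times \nu$-a.e.\ $(x,y)$.

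The only nontrivial step is the joint $\Bcal \times \Acal$-measurability of the fiberwise conditional expectation $\tilde g$; everything else is bookkeeping via Fubini and uniqueness of conditional expectation. Handling measurability via the monotone class method has the additional benefit of sidestepping any countable-generation assumption on $\Acal$.
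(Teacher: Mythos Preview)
Your argument is correct and complete. The paper itself states Lemma~\ref{3.3} without proof, treating it as a standard measure-theoretic fact, so there is no approach to compare against; your route via the identification $E[f\mid \Bcal\times\Acal](x,\cdot)=E_\nu[f_x\mid\Acal]$ (established by a monotone class argument starting from rectangles, then verified against the defining property of conditional expectation via Fubini) is exactly the natural way to fill the gap. One minor remark: the tensor-product structure makes the fiberwise identity almost immediate if one views $E[\,\cdot\mid\Bcal\times\Acal]$ as the orthogonal projection $I\otimes E_\nu[\,\cdot\mid\Acal]$ on $L^2(\mu)\otimes L^2(\nu)$, which would let you bypass the monotone class step; but your version has the advantage of staying in $L^\infty$ and avoiding any Hilbert-space machinery.
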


\br

\begin{thm}\label{ 3.4}
Let $(X,\mu,G)$ be an $m$-system. If
the canonical $m$-proximal quasifactor $\Pi(\Xcal)=(M(X),P^*)$
is mixingly embedded in $\Xcal$. Then $\Pi(\Xcal)$ is a factor
of $(X,\mu)$.
\end{thm}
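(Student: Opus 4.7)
The plan is to reduce Theorem 3.4 to Theorem 3.1 by showing that the mixing embedding hypothesis automatically implies the reconstructive property (assumption (2) of Theorem 3.1), with Lemma 3.3 providing the measure-theoretic bridge.

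First, I would recall that the proof of Theorem 3.1 uses mixing embedding alone to establish, for every $f \in C(X)$ and every net $g_n \to \infty$ along which the weak-$*$ limit exists in $L_\infty(\mu^*)$, the identity
\[
\hat f := w^*\text{-}\lim g_n f = w^*\text{-}\lim g_n \tilde f,
\]
so that $\hat f$ lies in $L_\infty(\mu) \cap L_\infty(P^*)$. Assumption (2) of Theorem 3.1 is used only to guarantee that the collection $\Fcal(V)$ of such $\hat f$, where $V = E^{\Pi(\Xcal)}(C(X)) = \{\tilde f : f \in C(X)\}$, generates the full Borel $\sigma$-algebra on $\Pi(\Xcal)$. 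Thus it suffices to verify this generating property from mixing embedding alone.

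Second, I would exploit the Corollary to Theorem 3.1 (which itself relies only on mixing embedding): for $P$-a.e.\ $\omega$,
\[
w^*\text{-}\lim_n \eta_n(\omega) f = \tilde f(\mu_\omega)
\quad \text{in } L_\infty(\mu^*),
\]
so that each constant $\tilde f(\mu_\omega)$ appears as a member of $\Fcal(V)$ (via the random-walk sequences $\eta_n(\omega) \to \infty$). Since $\{\tilde f : f \in C(X)\}$ consists of continuous affine functionals that separate points of $M(X)$ by definition of the weak-$*$ topology, these limit values, letting $f$ and $\omega$ vary, already distinguish $P^*$-a.e.\ distinct $m_1, m_2 \in M(X)$.

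Third, to promote this point-wise-in-$\omega$ distinguishability into the desired $\sigma$-algebraic statement on $\Pi(\Xcal)$, I would invoke Lemma 3.3 applied to the product $(\Omega, P) \times (\Pi(\Xcal), P^*)$. The function $(\omega, m) \mapsto \tilde f(\mu_\omega)$ is, for each fixed $\omega$, trivially $\sigma(\Fcal(V))$-measurable in $m$ (being constant); Lemma 3.3 upgrades this slice-wise measurability to joint measurability, and by integrating or disintegrating appropriately with the $\omega \mapsto \mu_\omega$ map pushing $P$ to $P^*$, one obtains $\tilde f$ itself as a $\sigma(\Fcal(V))$-measurable function on $\Pi(\Xcal)$. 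Together with the Stone--Weierstrass density of the algebra generated by $V$ in $C(M(X))$, this yields $\sigma(\Fcal(V)) = \Bcal(M(X))$, i.e.\ the reconstructive property.

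The main obstacle will be the third step: carefully setting up the auxiliary product structure so that Lemma 3.3 applies cleanly (in particular navigating that $\mu^*$ itself is not a product measure on $X \times M(X)$), and tracking which $\sigma$-algebras are preserved through the weak-$*$ limit operations and through the pushforward $\omega \mapsto \mu_\omega$. Once the reconstructive property is established, Theorem 3.1 applies immediately and furnishes the conclusion that $\Pi(\Xcal)$ is a factor of $\Xcal$.
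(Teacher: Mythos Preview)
Your proposal has a genuine gap in the third step, and it also misses the context in which the paper places Theorem~3.4.

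First, the context: although the statement reads as if it applies to an arbitrary group $G$, in the paper Theorem~3.4 appears immediately after the product-group example $G = T \times S$ with $m = m_T \times m_S$, and Theorem~3.2 asserting $\Pi(G,m) = \Pi(T,m_T) \times \Pi(S,m_S)$. The paper's proof uses this product structure in an essential way. Since $\Pi(\Xcal)$ is a factor of $\Pi(G,m)$, each $\tilde f$ lifts to a function $\tilde f(u,v)$ on $\Pi(S)\times\Pi(T)$. By proximality of $\Pi(T)$ one finds, for almost every $v_0$, a sequence $t_n^{v_0}\in T$ with $t_n^{v_0} v \to v_0$ for a.e.\ $v$; applying the mixing identity (\ref{equality}) along this \emph{one-factor} sequence produces the limit $\tilde f(u,v_0)$, which is a \emph{non-constant} function of $u$. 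Lemma~3.3 is then invoked with the genuine product $\Pi(S)\times\Pi(T)$: for a.e.\ $v_0$ the slice $u\mapsto\tilde f(u,v_0)$ is measurable with respect to the relevant sub-$\sigma$-algebra, so $\tilde f$ itself is. This is exactly how reconstructivity is obtained.

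Your argument, by contrast, pushes the random walk $\eta_n(\omega)$ in the full group $G$, and the Corollary then gives only the \emph{constant} limit $\tilde f(\mu_\omega)$. Constants are measurable with respect to every $\sigma$-algebra, including the trivial one, so adding them to $\Fcal(V)$ contributes nothing to the $\sigma$-algebra it generates. Your application of Lemma~3.3 to $(\omega,m)\mapsto\tilde f(\mu_\omega)$ is therefore vacuous: the slice in $m$ is constant, hence $\Acal$-measurable for any $\Acal$, and the lemma returns no information. The subsequent phrase ``integrating or disintegrating appropriately with the $\omega\mapsto\mu_\omega$ map'' does not recover $\tilde f$ as a $\sigma(\Fcal(V))$-measurable function on $M(X)$; it only gives the $\Omega$-measurable function $\omega\mapsto\tilde f(\mu_\omega)$, which says nothing about measurability on $\Pi(\Xcal)$.

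The missing idea is precisely the product (or ``higher-rank'') structure: one needs sequences going to infinity along which the limit of $g_n\tilde f$ is non-constant on $\Pi(\Xcal)$. Acting by one factor while keeping the other free is what produces such limits; the full random walk collapses everything to a point mass and loses the information you need.
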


\begin{proof}
In view of theorem \ref{3.1}, all we have to show is that
$\Pi(\Xcal)$ is a reconstructive $m$-system with respect to the
subspace $V=E^{\Pi(\Xcal)}(C(X))$. For $f\in C(X)$ the function
$\tilde f$ is $\Pi(\Xcal)$ measurable.  Since $\Pi(\Xcal)$ is
a factor of $\Pi(G)$, by theorem \ref{3.2}, lifting $\tilde f$ to $\Pi(G)$
we can write $\tilde f$ as a function of two variables $\tilde
f(u,v)$, with $u\in \Pi(S)$ and $v\in \Pi(T)$. Now for almost every
$v_0\in \Pi(T)$ there exists a sequence $t^{v_0}_n\in T$ with
$\lim t^{v_0}_n v=v_0$ for $\mu_T$ almost every $v\in \Pi(T)$.
Thus, by (\ref{equality}), we see that
$$
\tilde f^{v_0}(u)=w^*{\text -}\lim t^{v_0}_n  f
    =w^*{\text -}\lim t^{v_0}_n  \tilde f(u,v)
    =\tilde f(u,v_0)
$$
is $\Xcal$ measurable. Similarly for almost every
$u_0\in \Pi(S)$ the function $\tilde f_{u_0}(v)=f(u_0,v)$ is
$\Xcal$ measurable. By lemma \ref{3.3}, $\tilde f$ is $\Xcal$
measurable and since the subspace $V=\{\tilde f:f \in C(X)\}$
generates $C(\Pi(X))$ as an algebra, we conclude that
$\Pi(X)$ is a reconstructive $m$-system with respect to $V$.
\end{proof}

%eee
\section{A Szemer\'edi type theorem for $SL(2,\R)$.}
In this section $G$ will denote the Lie group $SL(2,\R)$ and we
write $G=KAN$ for the standard Iwasawa decomposition of $G$; 
in particular $K$ is the subgroup of $2$ by $2$ orthogonal matrices.

%eee
Recall that a {\bf mean} on a topological group $G$
is a positive linear functional $\rho$ on $LUC(G)$ with $\rho(\ch)=1$.
Here $LUC(G)$ denotes the commutative $C^*$-algebra of 
bounded, complex valued,
left uniformly continuous functions on $G$. ($f: G \to \C$ is
{\bf left uniformly continuous} if for every $\ep>0$ there exists a neighborhood 
$V$ of the identity element $e\in G$ such that 
$\sup_{g\in G} |f(vg)-f(g)| < \ep$ for every $v \in V$.)
The set of means on $G$ forms a $w^*$-closed convex subset of
$LUC(G)^*$ and we say that an element of this set is {\bf $m$-stationary}
if $m*\rho= \rho$. By the Markov-Kakutani fixed point theorem
the set of $m$-stationary means is nonempty.

Let $Z$ be the (compact Hausdorff) Gelfand 
space corresponding to the  $C^*$-algebra $\mathcal{L}=LUC(G)$. 
Recall that $Z$ can be viewed as the space of non-zero
continuous $C^*$-homomorphisms of the $C^*$-algebra $\mathcal{L}$
into $\C$. In particular, for each $g\in G$ the evaluation
map $z_g: F \mapsto F(g)$ is an element of $Z$.
The fact that $\mathcal{L}$ is $G$-invariant 
(i.e. for $f \in \mathcal{L}$ and $g \in G$ also
$f_g\in \mathcal{L}$, where $f_g(h)=f(gh)$) 
implies that there is a naturally defined $G$-action on $Z$.
We have $g z_e = z_g$ for every $g \in G$, and it follows directly
that the $G$-orbit of the point $z_e$ is dense in $Z$.
Also by the construction of the Gelfand space we obtain a natural isomorphism 
of the commutative $C^*$-algebras $\mathcal{L}$ and $C(Z)$. 
Let $\tilde{f}$ denote the element of $C(Z)$
which corresponds to $f$ under this isomorphism.
Now according to Riesz' representation theorem we identify 
$LUC(G)^*$ with the Banach space of complex regular Borel measures
on $Z$. In this setting a mean on $G$ is identified with a probability 
measure on $Z$. If $L$ is a subset of $G$ and $\rho$ is a mean on
$G$, we say that $L$ is {\bf charged by $\rho$} and write $\rho(L)>0$ if 
$$
\mu_\rho(\cls \{z_g: g \in L\}) > 0.
$$
Here $\mu_\rho$ is the probability measure on $Z$ which corresponds
to the mean $\rho$. 
It is easy to check that with respect to the natural $G$-action on $Z$
the mean $m*\rho$ 
(defined by $m*\rho(f) = \int_G \rho(f_g)\,dm(g)$)
corresponds to the measure $m*\mu_\rho$, so that 
$\rho$ is $m$-stationary if and only if the measure $\mu_\rho$ is $m$-stationary.

\begin{thm}\label{sl}
Let $m$ be an admissible probability measure on $G$
and let $\rho$ a $K$-invariant $m$-stationary mean on
$G=SL(2,\R)$. If $\rho(L) >0$
for a subset $L\subset G$ then for every $\ep>0$, $k\ge 1$ 
and a compact set $Q \subset G$,
there exist $a_0$ and $h$ in $G \setminus Q$ such that
$$
a_0, h a_0,\dots,h^k a_0\in L_\ep,
$$
where $L_\ep=\{g \in G: d(g,L)<\ep\}$.
\end{thm}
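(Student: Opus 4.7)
The plan is to reformulate the combinatorial conclusion as a multiple recurrence problem for the $m$-system $(Z,\mu_\rho,G)$, where $\mu_\rho$ is the $K$-invariant $m$-stationary Borel probability measure on the LUC-compactification $Z$ corresponding to $\rho$, and then to combine the structure theorem with the specific form of the Poisson boundary of $SL(2,\R)$ to reduce to classical Furstenberg-type recurrence. Setting $E=\cls\{z_g:g\in L\}$, the hypothesis gives $\mu_\rho(E)>0$. Since $g\mapsto z_g$ is a uniform topological embedding of $G$ with dense image, it is enough to find $h,a_0\in G\setminus Q$ such that $z_{a_0},hz_{a_0},\ldots,h^k z_{a_0}$ all lie within a prescribed $Z$-distance $\delta$ of $E$; the uniform structure of $LUC(G)$ then translates this into $a_0,ha_0,\ldots,h^k a_0\in L_\ep$.

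I apply Theorem \ref{2.3} to $\Xcal=(Z,\mu_\rho,G)$, passing to the standard cover $\Xcal^*=\Xcal\curlyvee\Pi(\Xcal)$ with $\pi:\Xcal^*\to\Xcal$ proximal and $\sig:\Xcal^*\to\Pi(\Xcal)$ measure-preserving. For $G=SL(2,\R)$ and admissible $m$ the Poisson boundary is $\mathbb{P}^1=G/P$ (Example 2 of Section 2), and every $m$-proximal $G$-system is a factor of $\mathbb{P}^1$, so $\Pi(\Xcal)$ is either trivial---in which case $\Xcal$ itself is measure-preserving and a direct Furstenberg recurrence on a one-parameter subgroup suffices---or equal to $(\mathbb{P}^1,\nu)$. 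In this latter, main case, lift $E$ to $E^*=\pi^{-1}(E)$ (so $\mu^*(E^*)>0$) and disintegrate $\mu^*=\int\mu^*_y\,d\nu(y)$ over $\mathbb{P}^1$; Fubini produces a $\nu$-positive set of $y$ with $\mu^*_y(E^*)>0$. For any such $y$, the identity $g\mu^*_y=\mu^*_{gy}$ implies that the parabolic stabilizer $P_y=\mathrm{Stab}_G(y)$ acts measure-preservingly on $(Z^*_y,\mu^*_y)$. Choosing a one-parameter unipotent subgroup $N_y\cong\R$ inside $P_y$ and applying Furstenberg's multiple recurrence theorem to a cyclic subgroup of $N_y$ acting on $(Z^*_y,\mu^*_y)$, one obtains arbitrarily large $h\in N_y\setminus Q$ with
\[
\mu^*_y\bigl(E^*\cap h^{-1}E^*\cap\cdots\cap h^{-k}E^*\bigr)>0.
\]

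Selecting $z^*$ in this intersection and setting $z=\pi(z^*)\in Z$, we have $z,hz,\ldots,h^k z\in E$. Density of $\{z_g:g\in G\}$ in $Z$ together with continuity of each $h^i$ lets me pick $a_0\in G$ with $z_{a_0}$ so close to $z$ that every $z_{h^ia_0}=h^iz_{a_0}$ lies within $\delta$ of $E$; the uniform embedding then yields $h^ia_0\in L_\ep$. The escape $a_0\notin Q$ is arranged by taking the approximating $Z$-neighborhood of $z$ disjoint from the compact set $\{z_g:g\in Q\}$, which is possible for a generic recurrence point.

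The main technical obstacle lies in coordinating the proximal base with the measure-preserving fibers: while the $P_y$-action is genuinely measure-preserving on each individual fiber, the base measure $\nu$ on $\mathbb{P}^1$ is only $m$-stationary, so recurrence must be organized fiber-by-fiber and then pushed to the combinatorial level. The hypothesis of $K$-invariance of $\rho$ is used crucially here to ensure that the choice of $y$ and of the unipotent subgroup $N_y$ can be made compatibly, by exploiting the transitive $K$-action on $\mathbb{P}^1$. A subsidiary technicality is the escape condition $a_0\notin Q$, handled via the compactness of the image of $Q$ in $Z$ together with the positive measure of the recurrence set.
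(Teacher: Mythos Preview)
Your overall strategy—pass to the standard cover, identify $\Pi(\Xcal)$ with $\mathbb{P}^1$, and run multiple recurrence fiberwise—is a genuinely different route from the paper's. The paper does \emph{not} apply Furstenberg recurrence on a single fiber; instead it fixes one parabolic element $g$, uses a Lebesgue density point of the ``good'' set $B\subset\mathbb{P}^1$ to ensure $\la(B\cap gB\cap\cdots\cap g^NB)>0$, and then for each $y$ in this intersection invokes the \emph{finitary} Szemer\'edi theorem together with the relation $\mu_{g^iy}(A)=\mu_y(g^{-i}A)$ (which uses the measure-preserving extension property for only the finitely many elements $g,g^2,\ldots,g^N$) to produce an arithmetic progression. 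A pigeonhole over the finitely many pairs $(s,d)$ then gives positive $\mu$-measure to a $k$-fold intersection, and the correspondence principle finishes. The $K$-invariance of $\rho$ enters precisely to force the pushforward measure on $\mathbb{P}^1$ to be Lebesgue, which is what makes the density-point argument available.

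Your proof has a real gap at the sentence ``the identity $g\mu^*_y=\mu^*_{gy}$ implies that the parabolic stabilizer $P_y$ acts measure-preservingly on $(Z^*_y,\mu^*_y)$.'' The definition of a measure-preserving extension gives, for each \emph{fixed} $g\in G$, the equality $g\mu^*_y=\mu^*_{gy}$ only for $\nu$-a.e.\ $y$. You need it for \emph{all} $g\in P_y$ at a specific $y$ chosen from the positive-measure set where $\mu^*_y(E^*)>0$; but $P_y$ is an uncountable Haar-null subgroup depending on $y$, so no direct Fubini or countable-intersection argument yields this. The paper sidesteps this entirely by using only finitely many powers of a single $g$. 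Your gap can in fact be repaired—via the explicit formula $\mu^*_\theta=\theta\times\delta_\theta$ from the structure theorem (which is equivariant for all $g$ and all $\theta$) together with a rigidification of the measurable isomorphism $\mathbb{P}^1\cong\Pi(\Xcal)$ using the homogeneity $\mathbb{P}^1=G/P$—but this is nontrivial and you do not carry it out; your invocation of $K$-invariance at this point is too vague to do the job. A secondary issue is that you apply the structure theorem directly to $(Z,\mu_\rho)$, but $Z$ is the non-metrizable Gelfand space of $LUC(G)$, whereas the paper's framework of $m$-systems requires a standard Borel (compact metric) model; this is exactly why the paper first passes through the correspondence principle to a separable quotient.
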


%eee
\begin{lem}\label{corres}(A correspondence principle)
Let $G$ be a locally compact group.
Given a nonempty subset $L \subset G$,
there exists a compact metric $G$-space $X$ 
and an open subset $A \subset X$ such that 
$$
g^{-1}_1 A\cap g^{-1}_2 A \cap\cdots \cap g^{-1}_k A \ne \emptyset
\Longrightarrow g_1 a, \dots,g_k a \in L_\ep,
$$
for some $a \in G$.
If, moreover, $m$ is a probability measure on $G$ and 
$\rho$ an $m$-stationary mean on $G$ with $\rho(L) >0$,
then there exists an $m$-stationary probability measure
$\mu$ on $X$ with $\mu(A) \ge \rho(L)$.
\end{lem}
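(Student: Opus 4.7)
The plan is to adapt the Furstenberg correspondence principle to this locally compact setting by building a compact metric $G$-space from a uniformly continuous fattening of the indicator of $L$ via the Gelfand construction described just before the lemma, restricted to a separable $G$-invariant subalgebra.

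First I would fix a compatible left-invariant pseudo-metric $d$ on $G$ (available by second countability) and define $f\in LUC(G)$ by $f(g)=\max(0,1-\epsilon^{-1}d(g,L))$, which is left uniformly continuous by left invariance of $d$, satisfies $0\le f\le 1$, equals $1$ on $L$, and vanishes outside $L_\epsilon$. Let $\mathcal{L}_0$ be the smallest norm-closed $G$-invariant $C^*$-subalgebra of $\mathcal{L}=LUC(G)$ containing the constants and $f$. Since $g\mapsto f_g$ is continuous in sup norm (this is exactly the left uniform continuity of $f$) and $G$ is separable, $\mathcal{L}_0$ is separable, so its Gelfand space $X$ is a compact metric $G$-space with jointly continuous action, and restriction of homomorphisms gives a continuous $G$-equivariant surjection $\pi:Z\to X$. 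Writing $\tilde f_X\in C(X)$ for the Gelfand image of $f$, so that $\tilde f_X\circ\pi=\tilde f$, I set $A=\{x\in X:\tilde f_X(x)>0\}$, an open subset of $X$.

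To verify the combinatorial property, suppose $x\in g_1^{-1}A\cap\cdots\cap g_k^{-1}A$, so $\tilde f_X(g_ix)>0$ for each $i$. The $G$-orbit of $\pi(z_e)$ is dense in $X$ because $\{z_g:g\in G\}$ is dense in $Z$, so I can pick a net $(a_\alpha)\subset G$ with $a_\alpha\cdot\pi(z_e)\to x$. Then $g_ia_\alpha\cdot\pi(z_e)=\pi(z_{g_ia_\alpha})\to g_ix$, and continuity of $\tilde f_X$ gives $f(g_ia_\alpha)=\tilde f_X(\pi(z_{g_ia_\alpha}))\to\tilde f_X(g_ix)>0$. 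Since there are only finitely many indices I can fix a single $\alpha$ with $f(g_ia_\alpha)>0$ for every $i$ simultaneously; by construction of $f$ this forces each $g_ia_\alpha$ into $L_\epsilon$, so $a:=a_\alpha$ works.

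Finally I would transport the mean. Set $\mu=\pi_*\mu_\rho\in M(X)$; the $G$-equivariance of $\pi$ gives $m*\mu=\pi_*(m*\mu_\rho)=\pi_*\mu_\rho=\mu$, so $\mu$ is $m$-stationary. For $g\in L$, $\tilde f(z_g)=f(g)=1$, and since $\tilde f$ is continuous the closed set $\{\tilde f\ge 1\}\subseteq Z$ contains $\cls\{z_g:g\in L\}$ and sits inside $\pi^{-1}(A)$; therefore
$$\mu(A)=\mu_\rho(\pi^{-1}(A))\ge\mu_\rho(\cls\{z_g:g\in L\})=\rho(L).$$
The only mildly delicate points are the production of the left uniformly continuous Urysohn function $f$ and the verification that the resulting $G$-action on $X$ is jointly continuous; once those are in hand the rest is routine Gelfand theory applied to the separable subalgebra $\mathcal{L}_0$ together with the density of the $G$-orbit of $z_e$ in $Z$.
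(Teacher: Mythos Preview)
Your proof is correct and follows essentially the same route as the paper: both build the compact metric $G$-space as the Gelfand space of the separable $G$-invariant $C^*$-subalgebra of $LUC(G)$ generated by a bump function supported in $L_\ep$, then use the density of the orbit of the evaluation point to pull the intersection condition back to $G$, and push $\mu_\rho$ forward to obtain the stationary measure. The only differences are cosmetic: the paper takes $f$ to be $1$ on $L_{\ep/2}$ and $0$ off $L_\ep$ and sets $A=\{\hat f>1/2\}$, whereas you take $f(g)=\max(0,1-\ep^{-1}d(g,L))$ and $A=\{\tilde f_X>0\}$; both choices make the chain $\cls\{z_g:g\in L\}\subset\{\tilde f=1\}\subset\pi^{-1}(A)$ go through.
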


\begin{proof}
Let $f: G \to [0,1]$ be a left uniformly continuous function
such that $f(g) = 1$ for every $g \in L_{\ep/2}$ and
$f(g) =0$ for every $g \not\in L_\ep$.
Let $\mathcal{A}$ be the uniformly closed subalgebra of
the algebra $LUC(G)$ of complex valued bounded left uniformly continuous
functions on $G$ generated by the orbit $\{f_g: g\in G\}$ of $f$,
where $f_g(h)=f(gh)$. Let $X$ be the (compact metric) Gelfand 
space corresponding to $\mathcal{A}$. 
The fact that $\mathcal{A}$ is $G$-invariant implies that there is a naturally defined $G$-action on $X$.
Clearly the restriction map $\pi: Z \to X$, where with the above notation $Z$ is
the Gelfand space corresponding to $\mathcal{L}$, is a
homomorphism of the corresponding dynamical systems. We
denote $x_g=\pi(z_g)$, so that $gx_e = x_g$ and 
$\cls \{g x_e : g \in G\} = X$. 

By the construction of the Gelfand space we obtain a natural isomorphism 
of the commutative $C^*$-algebras $\mathcal{A}$ and $C(X)$. 
Let $\hat{f}$ denote the element of $C(X)$
which corresponds to $f$ under this isomorphism.

Clearly the restriction of $\rho$ to $\mathcal{A}$
defines an $m$-stationary probability measure $\mu$ on $X$,
so that the system $(X,\mu,G)$ is an $m$-system.
In fact $\mu = \pi_*(\mu_\rho)$.

Let $A = \{x  \in X: \hat f(x) > 1/2\}$ and consider the set
$N(x_e,A)=\{g \in G: gx_e \in A\}$.  We clearly have
$\{g \in G: f(g) > 1/2\} = \{g \in G: gx_e \in A\}$.
Note that indeed
%since $\ch_L \le \f$
%$$
%\rho(L) \le \rho(f) = \int_X \hat f \, d\mu 
\begin{align*}
\mu(A) & = \int_X \ch_A \, d\mu =
\int_X \ch_{\{\hat f > 1/2\}} \, d\mu \\
& \ge  \int_X \ch_{\{\hat f=1\}} \, d\mu =   \int_Z \ch_{\{\tilde{f}=1\}} \, d\mu_\rho\\
&\ge \rho(L_{\ep/2}) \ge \rho(L).
\end{align*}

Now assume $g^{-1}_1 A\cap g^{-1}_2 A \cap\cdots 
\cap g^{-1}_k A\ne\emptyset$
and let $x$ be a point in this intersection. Then 
$g_i x \in A$ for $i=1,\dots,k$ and choosing $a \in G$
so that $d(a x_0,x)$ is sufficiently small,
we also have $g_i a x_0 \in A$  for $i=1,\dots,k$.
Thus $f(g_i a ) > 1/2$ and we conclude that
$g_1 a, g_2 a,\dots,g_k a \in L_\ep$.
 \end{proof}

\begin{proof}[Proof of the theorem]
By the correspondence principle, lemma \ref{corres}, we can associate with
$L$ an $m$-system $(X,\mu,G)$ and 
an open
%a measurable 
subset $A\subset X$
with $\mu(A)\ge \rho(L) >0$ such that
%$$
%\mu(g_1 A\cap g_2A \cap\cdots \cap g_k A)>0 \Longrightarrow
%g_1, g_2,\dots,g_k \in L_\ep.
%$$
\begin{equation}\label{corres-eq}
\mu(g^{-1}_1 A\cap g^{-1}_2 A \cap\cdots \cap g^{-1}_k A)>0 
\Longrightarrow g_1 a, g_2 a,\dots,g_k a \in L_\ep,
\end{equation}
for some $a \in G$.

Let
\begin{equation*}
\xymatrix
{
& (X^*,\mu^*) \ar[dl]_{\pi} \ar[dr]^{\sig}  &  \\
(X,\mu) &  & \Pi(\Xcal)=(Y,\la),
}
\end{equation*}
be the canonical standard cover of $(X,\mu)$, given by theorem \ref{2.3}, and
set $A^*=\pi^{-1}(A), B=\sig(A^*)$.

Let
$$
\mu^*=\int_Y \mu_y\times \del_y \, d\la(y)
$$
be the decomposition of $\mu^*$ over $(Y,\la)$.
If we write
$A^*=\bigcup \{A_y\times \{y\}: y\in B\}$ then, by reducing
$B$ if necessary, we can assume that $\mu_y(A_y)\ge \del$
for some $\del >0$.

In the present situation $Y=\PP^1$, the projective line,
and $\sig(\mu^*)=\la$ is Lebesgue measure. Let $y_0\in B$
be a Lebesgue density point of $B$.

{\it Claim}: If $g$ is a parabolic element of $SL(2,\R)$
with fixed point $y_0$ then for every $N$ 
$$
\la(B\cap g B\cap g^{2}B\cap\cdots \cap g^{N}B) >0.
$$

{\it Proof of claim}:
Of course $y_0=gy_0$ is a Lebesgue density point of each
of the sets $g^jB$ and therefore we can choose an interval
$J \subset Y$ such that 
$$
\frac{|J \cap g^jB|}{|J|} \ge (1- \frac{1}{2N}) |J| \quad j=1,\dots,N,
$$ 
whence $\la(J \cap\bigcap_{j=1}^N g^j B) \ge \frac12 |J|$.

Now back to the proof of theorem \ref{sl}.

Szemer\'edi's theorem yields, for every  positive integer $k$ and
$\del> 0$, a positive integer
$N=N(k,\del)$ such that every subset of $\{1,2,\dots,N\}$
of size $\del N$ contains an arithmetic progression of length $k$.

Take $g$ as in the above claim, and such that
$\{g^n: n =1,2,\dots\} \cap Q =\emptyset$.
Denoting $B_0=B\cap g B\cap g^{2}B\cap\cdots \cap g^{N}B$,
we have for $\la$ almost every point $y \in B_0$,
$$
\sum_{i=1}^N \int_X\ch_{A_i}(x)\, d\mu_{g^{i}y}(x) \ge \del N,
$$
where $A_i=A_{g^{i}y}$.
Thus for some $A_{i_j}, 1 \le j \le [\del N]$ the
intersection $\cap_{j=1}^{[\del N]}A_{i_j}\ne\emptyset$.
If we take $N=N(k,\del)$ then we can find
$s$ and $d$ such that
$$
\{s, s+d, s+2d,\dots,s + kd\}\subset
\{i_1,i_2,\dots,i_{[\del N]}\},
$$
and some $x^*\in X^*$ with
$$
g^{s}x^*, g^{s+d}x^*, g^{s+2d}x^*,\dots, g^{s+kd}x^*
$$
all in $A^*$, hence, with $x=\pi(x^*)$,
$$
g^{s}x, g^{s+d}x, g^{s+2d}x,\dots, g^{s+kd}x
$$
all in $A$.

Since there are only finitely many possible $s$ and $d$
we conclude that for some pair $s,d$,
\begin{equation}\label{sd}
\mu(g^{s}A\cap g^{s+d}A \cap\cdots \cap g^{s+kd}A)
%\mu^*(g^{s} A^*\cap g^{(s+d)}A^*\cap g^{(s+2d)}A^*
%\cap \cdots\cap g^{(s+kd)}A^*)
 > 0.
\end{equation}
%Now the fact that $\sig$ is a measure preserving extension
%implies that
%\begin{align*}
%\mu^*(g^{sM} A^*\cap g^{(s+d)M}A^*\cap g^{(s+2d)M}A^*\\
%= & \int
%
%
%\end{align*}
%
%
%\begin{align*}
%0 < \la(B_0)& =\mu^*(A^*\cap g^{M}A^* \cap\cdots \cap g^{MN}A^*)\\
%& =
%\int_{B_0}  \int_X \ch_{A_{g^{Mi}y}}(x)\,dg^{Mi}\mu_y(x)\, d\la(y)\\
%& =\int_Y \int_X \prod \ch_{A_{g^{Mi}y}}(x)\,d\mu_{g^{Mi}y}(x)\, d\la(y).
%\end{align*}

In fact 
%that $\sig$ is a measure preserving extension implies that
\begin{gather*}
\mu(g^{s}A\cap g^{s+d}A \cap\cdots \cap g^{s+kd}A)\\
=  \mu^*(g^{s}A^*\cap g^{s+d}A^* \cap\cdots \cap g^{s+kd}A^*)\\
=
\int_{B_0}  \int_X \prod_{j=0}^k\ch_{A_{g^{s+jd}y}}(x)\,
dg^{s+jd}\mu_y(x)\, d\la(y)\\
=\int_{B_0} \int_X \prod_{j=0}^k \ch_{A_{g^{s+jd}y}}(x)\,
d\mu_{g^{s+jd}y}(x)\, d\la(y).
\end{gather*}
%If for each possible pair $s$ and $d$ we have
Thus, if
$\mu(g^{s}A\cap g^{s+d}A \cap\cdots \cap g^{s+kd}A)=0$ then
also
$$
\int_X \prod_{j=0}^k \ch_{A_{g^{s+jd}y}}(x)\,
d\mu_{g^{s+jd}y}(x)\, d\la(y)=0
$$
for $\la$ a.e. $y\in B_0$, contradicting our assumption on $s$ and $d$.
%\eqref{sd}.

From (\ref{sd}), by the correspondence principle (\ref{corres-eq}),
%, Lemma \ref{corres},
we can find $a \in G$ with 
$$
g^{-s}a, g^{-(s + d)}a, \dots, g^{-(s + kd)}a \in L_\ep.
$$
Setting $a_0 = g^{-s}a$ and $h = g^{-d}$ we finally get
$$
a_0, h a_0,\dots, h^k a_0 \in L_\ep.
$$

%Setting $a = g^{s}$, $g_0=g^{d}$ we have
%$\mu(aA\cap ag_0A \cap\cdots \cap ag_0^kA)>0$
%and can deduce, from the correspondence
%principle, that
%$$
%a, ag_0, ag_0^2,\dots,ag_0^k \in L.
%$$
%The proof of the theorem is complete.
\end{proof}

%eee
\begin{rmk}
Independently of our work T. Meyerovich applies in a recent work \cite{M},
similar ideas in order to obtain multiple and polynomial recurrence
lifting theorems for infinite measure preserving systems.
\end{rmk}

\section{WAP actions are stiff }\label{sec-wap}
%{\bf Definitions:}
A compact topological dynamical system $(X,G)$ is called
{\bf weakly almost periodic}
or {\bf WAP}, if for
every $f\in C(X)$, the set $\{f_g:g\in G\}$ is
relatively compact
in the weak topology on $C(X)$ (where
$f_g \in C(X)$ is defined
by $f_g(x)=f(gx)$.) Ellis and Nerurkar \cite{EN} showed
that $(X,G)$ is weakly almost periodic if and only if
every element $p$ in the enveloping semigroup $E$ of the
system $(X,G)$ is a continuous map.
(Recall that $E=E(X,G)$, the {\bf enveloping semigroup\/}
of the compact topological dynamical system $(X,G)$
is, by definition, the closure of the set of maps
$\{g:X \to X: g\in G\}$ in the compact product space
$X^X$, where the semigroup structure is defined by
composition of maps.)
As shown in \cite{EN} the enveloping semigroup$E=E(X,G)$
of a WAP system contains a
unique minimal left ideal $I$ which is in fact a compact
topological group. Consequently, in a topologically transitive WAP
system there is a unique minimal subset and the action of $G$
on this minimal set is equicontinuous.

A topological dynamical system $(X,G)$ is called
{\bf stiff with respect to $m$} or $m$-{\bf stiff\/} if
every $m$-stationary measure on $X$ is
$G$-invariant, (see \cite{F2}). Our goal in this section is to show that
WAP systems are stiff (theorem \ref{thm5.4} below).

\begin{lem}\label{lem5.1}
Let $(X,G)$ be a WAP dynamical system.
Every element $p\in E$ defines an element $p_*\in E(M(X),G)$
and the map $p\mapsto p_*$ is an isomorphism of $E=E(X,G)$ onto
$E(M(X),G)$. In particular the dynamical system $(M(X),G)$ is also
WAP.
\end{lem}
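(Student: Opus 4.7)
The plan is to use the Ellis--Nerurkar characterization throughout: since $(X,G)$ is WAP, every $p\in E=E(X,G)$ is a \emph{continuous} self-map of $X$, so the pushforward $p_*:M(X)\to M(X)$ defined by $(p_*\mu)(f)=\int f\circ p\,d\mu$ is well defined and weak$^*$-continuous. I would first show that $p_*\in E(M(X),G)$ for every $p\in E$, then verify that $p\mapsto p_*$ is an injective semigroup homomorphism onto $E(M(X),G)$, and finally deduce the WAP property for $(M(X),G)$ from the continuity of each $p_*$.

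For the first step, choose a net $(g_\alpha)\subset G$ with $g_\alpha\to p$ pointwise on $X$. For any $f\in C(X)$ and $\mu\in M(X)$ the functions $f\circ g_\alpha$ are uniformly bounded and converge pointwise to $f\circ p$, so by dominated convergence
\[
(g_\alpha\mu)(f)=\int f(g_\alpha x)\,d\mu(x)\longrightarrow \int f(px)\,d\mu(x)=(p_*\mu)(f).
\]
Thus $g_\alpha\mu\to p_*\mu$ weak$^*$ for every $\mu$. Passing to a subnet so that $(g_\alpha)$ converges in the compact space $M(X)^{M(X)}$ to some $q\in E(M(X),G)$, the pointwise identification forces $q=p_*$, so $p_*\in E(M(X),G)$.

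Next, the identity $(pq)_*=p_*q_*$ shows that $p\mapsto p_*$ is a semigroup homomorphism. Injectivity is immediate: $p_*\delta_x=\delta_{p(x)}$, so $p_*=q_*$ on point masses forces $p=q$ on $X$. For surjectivity, given $\tilde q\in E(M(X),G)$ take a net $(h_\beta)\subset G$ with $h_\beta\to\tilde q$ pointwise on $M(X)$; by compactness of $X^X$ pass to a subnet so that $h_\beta\to p$ in $X^X$, and then the first step yields $h_\beta\to p_*$ in $M(X)^{M(X)}$, whence $\tilde q=p_*$.

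For the final assertion, note that every element of $E(M(X),G)$ has the form $p_*$ with $p:X\to X$ continuous, and then $p_*$ is itself weak$^*$-continuous: if $\mu_n\to\mu$ then, for each $f\in C(X)$, $(p_*\mu_n)(f)=\mu_n(f\circ p)\to\mu(f\circ p)=(p_*\mu)(f)$, since $f\circ p\in C(X)$. Hence every element of the enveloping semigroup of $(M(X),G)$ is continuous and, by Ellis--Nerurkar again, $(M(X),G)$ is WAP. The only real technical point is the application of dominated convergence in the first step, which is precisely where the WAP hypothesis (via continuity of $p$) is used to guarantee that the weak$^*$-limit $p_*\mu$ is an honest probability measure pairing continuously against $C(X)$.
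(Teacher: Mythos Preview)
Your overall architecture is right, and the parts on injectivity, surjectivity, the semigroup identity $(pq)_*=p_*q_*$, and the continuity of each $p_*$ are all fine (and in fact spell out what the paper leaves as ``easy to see''). But the key analytic step has a genuine gap: the dominated convergence theorem is \emph{not} valid for nets. When you assert that $f\circ g_\alpha\to f\circ p$ pointwise and uniformly bounded, hence $\int f\circ g_\alpha\,d\mu\to\int f\circ p\,d\mu$ ``by dominated convergence'', this would be correct for a sequence but can fail for a general net; and there is no a priori reason why an arbitrary $p\in E(X,G)$ is reachable from $G$ by a \emph{sequence} in $X^X$.

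The paper closes exactly this gap by invoking Grothendieck's theorem. Since $(X,G)$ is WAP, the orbit $\{f\circ g:g\in G\}$ is weakly relatively compact in $C(X)$, and Grothendieck's theorem says that on a weakly relatively compact subset of $C(K)$ the weak topology and the topology of pointwise convergence agree. Hence the pointwise net convergence $f\circ g_\alpha\to f\circ p$ upgrades automatically to weak convergence in $C(X)$, and weak convergence is precisely the statement that $\mu(f\circ g_\alpha)\to\mu(f\circ p)$ for every $\mu\in M(X)$. With this substitution your first step goes through verbatim, and the rest of your argument then proves the lemma.
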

\begin{proof}
If $g_i\to p$ is a net of elements of $G$ converging to
$p\in E=E(X,G)$,
then by Grothendieck's theorem, for every $f\in C(X)$,
\  $f\circ g_i\to f\circ p$ weakly in $C(X)$.
Therefore, we have for every $\nu\in M(X)$ and $f\in C(X)$:
$$
g_i\nu(f)=\nu(f\circ g_i)\to \nu(f\circ p):=p_*\nu(f).
$$
It is easy to see that $p\mapsto p_*$ is an isomorphism of flows,
whence a semigroup isomorphism. Finally as $G$ is dense in both
enveloping semigroups, it follows that this isomorphism is onto.
\end{proof}

\br

In the sequel we will identify the two enveloping semigroups and will
write $p$ for both $p$ and $p_*$.

\br

We recall the following theorem of R. Azencott (\cite{A},
theorem I.2, page 11).

\begin{thm}\label{contr}
Let $(X,G)$ be a topological dynamical system
with $X$ a compact metric space. Let $\mu$ be a probability
measure on $X$. The following properties are equivalent:
\begin{enumerate}
\item
For every $x\in X$, the measure $\del_x$ is a weak $^*$ limit point
of the set $\{g\mu:g\in G\}$ in $M(X)$.
\item
For every countable dense subset $D$ of $X$ there exists a Borel
subset $A$ of $X$ with $\mu(A)=1$ and with the property that for
every $x\in D$ there exists a sequence $g_n\in G$  such that
$$
\lim g_ny=x\qquad \forall y\in A.
$$
\end{enumerate}
\end{thm}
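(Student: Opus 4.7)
The plan is to prove the two implications separately, with $(2) \Rightarrow (1)$ essentially by dominated convergence and $(1) \Rightarrow (2)$ via a Borel--Cantelli argument along a carefully chosen subsequence.

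For $(2) \Rightarrow (1)$: fix $x \in X$ and apply (2) to any countable dense $D \subset X$ containing $x$. One obtains a Borel set $A$ with $\mu(A) = 1$ and a sequence $g_n \in G$ with $g_n y \to x$ for every $y \in A$. For $f \in C(X)$, dominated convergence (using that $f$ is bounded and $g_ny \to x$ for $\mu$-a.e.\ $y$) yields
$$
\int_X f\, d(g_n\mu) \;=\; \int_X f(g_n y)\, d\mu(y) \;\longrightarrow\; f(x) \;=\; \int_X f\, d\del_x,
$$
so $g_n \mu \to \del_x$ weak$^*$, establishing (1).

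For $(1) \Rightarrow (2)$: enumerate a countable dense $D = \{x_k\}_{k\ge 1}$. Because $X$ is compact metric, $M(X)$ with the weak$^*$ topology is compact metrizable, so for each $k$ hypothesis (1) supplies a \emph{sequence} $\{g^{(k)}_n\}_{n\ge 1} \subset G$ with $g^{(k)}_n \mu \to \del_{x_k}$. Choose a nested open neighborhood basis $U^{(k)}_1 \supset U^{(k)}_2 \supset \cdots$ at $x_k$ with $\bigcap_m U^{(k)}_m = \{x_k\}$. By the portmanteau theorem, for each fixed $m$,
$$
\mu\bigl((g^{(k)}_n)^{-1} U^{(k)}_m\bigr) \;=\; (g^{(k)}_n \mu)(U^{(k)}_m) \;\longrightarrow\; 1.
$$
A standard extraction then produces a subsequence, still denoted $\{g^{(k)}_n\}$, with $\mu\bigl((g^{(k)}_n)^{-1} U^{(k)}_n\bigr) > 1 - 2^{-n}$ for every $n$. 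The Borel--Cantelli lemma applied to the complementary events then yields a Borel set $A_k$ with $\mu(A_k) = 1$ on which $g^{(k)}_n y \in U^{(k)}_n$ for all sufficiently large $n$, hence $g^{(k)}_n y \to x_k$ for every $y \in A_k$. Finally, setting $A := \bigcap_k A_k$ produces a Borel set of full $\mu$-measure on which all the required convergences hold simultaneously.

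The only nontrivial technical point, and therefore the step to execute with care, is the passage from "the measure of $(g^{(k)}_n)^{-1} U^{(k)}_m$ tends to $1$" (which one gets for each $m$ separately) to "$g^{(k)}_n y \to x_k$ pointwise for $\mu$-a.e.\ $y$." Combining the $2^{-n}$-summable extraction with Borel--Cantelli is exactly what is needed, and is the crux of the argument; the use of metrizability of $M(X)$ to replace nets by sequences, and the aggregation of the countably many null sets over $k \in \N$, are routine.
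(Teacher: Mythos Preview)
Your proof is correct: the dominated convergence argument for $(2)\Rightarrow(1)$ and the portmanteau/Borel--Cantelli extraction for $(1)\Rightarrow(2)$ are exactly what is needed, and all steps are sound. Note, however, that the paper does not supply its own proof of this theorem; it is merely quoted from Azencott (\cite{A}, th\'eor\`eme~I.2, p.~11), so there is no in-paper argument to compare against---your write-up would serve as a self-contained replacement for that external citation.
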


\br

%eee
We call a measure $\mu$ satisfying the
equivalent conditions of theorem \ref{contr} a {\bf contractible} measure,
we then call the dynamical system $(X,\mu,G)$ a
{\bf  contractible system}. Note that a contractible system is
necessarily topologically transitive and moreover every point of
the subset $A$ belongs to the dense $G_\del$ subset $X_{tr}$ of
the transitive points of $X$. Also note that every $m$-proximal
system $(X,\mu,G)$, with $X=\supp(\mu)$, is contractible.

\begin{lem}\label{lem5.3}
Let $(X,G)$ be a WAP system. Let $\mu$ be an $m$-stationary
probability measure on $X$ with $X=\supp(\mu)$
such that the $m$-system $(X,\mu)$
is $m$-proximal. Then $(X,G)$ is the trivial one point system.
\end{lem}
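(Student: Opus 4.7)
The plan is to use $m$-proximality and $\supp\mu=X$ to force the enveloping semigroup $E=E(X,G)$ to contain, for every $y\in X$, the constant map $c_y\equiv y$, and then to read off from the WAP structure that this is possible only when $|X|=1$.

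To begin, $m$-proximality gives $\mu_\omega=\delta_{x_\omega}$ a.s., so $\mu$ is the distribution of the random variable $x_\omega$. Since $\supp\mu=X$, the remark after Theorem \ref{contr} yields that $(X,\mu,G)$ is contractible: for each $y\in X$ there is a sequence $g_n\in G$ with $g_n\mu\to\delta_y$ in $M(X)$. By compactness of $E\subset X^X$, pass to a subnet $g_n\to p\in E$. Lemma \ref{lem5.1} then gives $g_n\mu\to p\mu$ in $M(X)$, so $p\mu=\delta_y$. Using $\mu=\int\delta_{x_\omega}\,dP(\omega)$ together with the continuity of $p$ (which is what WAP buys us),
\[
p\mu \;=\; \int\delta_{p(x_\omega)}\,dP(\omega),
\]
so $p\mu=\delta_y$ is equivalent to $p(x_\omega)=y$ for $P$-a.e.\ $\omega$, i.e.\ $p(x)=y$ for $\mu$-a.e.\ $x\in X$. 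Continuity of $p$ and $\supp\mu=X$ then promote this to $p(x)=y$ for \emph{every} $x\in X$, i.e.\ the constant map $c_y$ lies in $E$. This is the main step of the argument: the hard part is precisely the upgrade from the weak-$*$ statement $g_n\mu\to\delta_y$ to pointwise-everywhere information on $p$, which uses WAP (through continuity of $p$) and the full-support hypothesis in an essential way.

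The rest is immediate. The very nets in $G$ which witness $c_y\in E$ also satisfy $g_ix\to y$ for every $x\in X$, so every $G$-orbit is dense and $(X,G)$ is minimal. By the Ellis--Nerurkar theorem quoted at the beginning of this section, a topologically transitive WAP system acts equicontinuously on its unique minimal subset; applied to the now-minimal $(X,G)$ this says $G$ acts equicontinuously on all of $X$, and therefore $E$ is a compact topological group of homeomorphisms of $X$. But a constant map on $X$ is a homeomorphism only when $|X|=1$, so the membership $c_y\in E$ forces $X$ to be a single point and $(X,G)$ to be the trivial one-point system.
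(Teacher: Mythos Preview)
Your proof is correct and follows essentially the same strategy as the paper: use contractibility to place a constant map $c_y$ in the enveloping semigroup, then invoke the WAP structure (equicontinuity on the unique minimal set) to force $|X|=1$. The only cosmetic difference is that the paper works with condition (2) of Azencott's theorem \ref{contr} (the full-measure set $A$ on which a sequence $g_n$ converges pointwise to $x_0$) and reaches $p\equiv x_0$ directly, whereas you use condition (1) together with Lemma \ref{lem5.1} to get $p\mu=\delta_y$ and then argue from full support; both routes arrive at the same constant-map conclusion.
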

\begin{proof}
Let $X_{tr}$ be the dense, $G$-invariant, $G_\del$ subset of $X$
consisting of all points with dense $G$-orbit.
Let $D$ and $A$ be the subsets
of $X$ given by theorem \ref{contr} (2). Since
$D$ can be any countable dense subset of $X$, we can assume
that $D\subset X_{tr}$. Fix a point $x_0\in D$ and let
$g_n\in G$ satisfy $\lim g_n y=x_0$ for every $y\in A$.
We can assume that the limit $p=\lim g_n$ exists in the enveloping
semigroup $E=E(X,G)$, and then $\lim g_n y=py=x_0$ for every $y\in A$.
Since $p$ is a continuous map and since clearly $A$ is a dense subset
of $X$, it follows that $px=x_0$ for every $x\in X$.
The elements of the left ideal $I=Ep$ are in 1-1
correspondence with the points of $X$ (with $q_x\in I$ defined
by $q_xy=x, \forall y\in X$) and it follows that
$I$ is the unique minimal left ideal in $E$.
It is now clear that $(X,G)$ is a minimal proximal system.
However in a WAP system the group action on the unique minimal
subset is equicontinuous and we conclude that $X$
consists of a single point.
\end{proof}

\br

\begin{thm}\label{thm5.4}
Let $G$ be a locally compact second countable topological group,
$m$ a probability measure on $G$ with the property that the
smallest closed subgroup containing $\supp(m)$ is all of $G$.
Then every WAP dynamical system $(X,G)$ is $m$-stiff.
\end{thm}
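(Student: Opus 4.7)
The plan is to reduce the statement to Lemma \ref{lem5.3} by applying the structure theorem for stationary systems, specifically Proposition \ref{1.1}. Given a WAP system $(X,G)$ and an $m$-stationary measure $\mu$ on $X$, I aim to show that the quasifactor $\Pi(\mathcal{X})=(M(X),P^{*})$ degenerates to a point, from which the $G$-invariance of $\mu$ follows immediately.

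First, I would pass to $\supp(\mu)$, which is a closed $G$-invariant subset of $X$ (using quasi-invariance of $\mu$ from Nevo--Zimmer, cited in Section 1), and replace $X$ by it so that $\supp(\mu)=X$. Next, invoke Lemma \ref{lem5.1} to upgrade WAP from $(X,G)$ to $(M(X),G)$: every element of $E(M(X),G)$ is continuous. Since WAP is inherited by closed $G$-invariant subsystems (continuity restricts), the subsystem $Y:=\supp(P^{*})\subset M(X)$ is also WAP as a $G$-system.

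Now, by Proposition \ref{1.1}, $\Pi(\mathcal{X})=(M(X),P^{*})$ is an $m$-proximal system, so its restriction $(Y,P^{*},G)$ is an $m$-proximal $m$-system with $\supp(P^{*})=Y$. This places us exactly in the hypotheses of Lemma \ref{lem5.3}, which forces $Y$ to be a single point. Hence $P^{*}=\delta_{\mu_{0}}$ for some $\mu_{0}\in M(X)$, i.e.\ $\Pi(\mathcal{X})$ is the trivial one-point system. Applying the second assertion of Proposition \ref{1.1} in the reverse direction, we conclude that $\mathcal{X}$ is measure preserving, i.e.\ $\mu$ is $G$-invariant. Since $\mu$ was an arbitrary $m$-stationary measure on $X$, this shows $(X,G)$ is $m$-stiff.

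The main conceptual obstacle is verifying that the machinery of Section 1 (conditional measures, the barycenter equation, Proposition \ref{1.1}) applies cleanly once we restrict to $\supp(\mu)$, and that the WAP property really does pass to $(M(X),G)$ and to the closed invariant subsystem $\supp(P^{*})$. Once these are checked, the argument is essentially a one-line application of Lemma \ref{lem5.3}; the deep content is packed into Lemma \ref{lem5.3} (which uses Azencott's contraction theorem together with the rigidity of WAP minimal sets being equicontinuous) and into Proposition \ref{1.1}.
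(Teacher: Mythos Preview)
Your proposal is correct and essentially identical to the paper's proof: both pass to $Z=\supp(P^*)\subset M(X)$, use Lemma \ref{lem5.1} to obtain WAP there, apply Lemma \ref{lem5.3} to conclude $Z$ is a single point, and deduce that $\mu$ is $G$-invariant. The only cosmetic differences are that the paper does not first restrict to $\supp(\mu)$ (this step is unnecessary) and concludes invariance directly via the barycenter equation $P^*=\delta_\mu$ rather than by invoking the ``iff'' clause of Proposition \ref{1.1}.
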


\begin{proof}
Let $E=E(X,G)$ be the enveloping semigroup of the WAP system
$(X,G)$.
Let $\mu$ be an $m$-stationary ergodic probability on $X$;
we will show that
$g\mu=\mu$ for every $g\in G$.
As in section 1 we let
$$
\lim_{n\to\infty}\eta_n\mu=\mu_\om,\qquad \ \om\in\Om_0,
$$
be the conditional measures of the $m$-system $\Xcal $, and let
$P^*\in M(M(X))$ be the distribution of the $M(X)$-valued random
variable $\mu(\om)=\mu_\om $. Let now $Z=\supp(P^*)\subset M(X)$.
Clearly $Z$ is a closed $G$-invariant subset of $M(X)$, and by
proposition \ref{1.1} the $m$-dynamical system $(Z,P^*,G)$ is
$m$-proximal. By lemma \ref{lem5.1} the dynamical system $(Z,G)$ is WAP and
therefore, by lemma \ref{lem5.3}, it is the trivial one point system. Since
the barycenter of $P^*$ is $\mu$, we have $P^*=\del_{\mu}$, and it
follows that $P^*$ as well as $\mu$ are $G$-invariant measures.
\end{proof}

\section{The SAT property}
%bbb
The notion of SAT (strongly approximately transitive) dynamical systems was
introduced by Jaworsky in \cite{J}, where he developed their theory
for discrete groups. For these groups he shows that the stationary measure
on the Poisson boundary is SAT.
It was later used in a slightly stronger version (SAT$^*$)
by Kaimanovich \cite{Ka} in order to study the horosphere foliation on a 
quotient of a CAT$(-1)$ space by a discrete group of isometries $G$, using the 
SAT$^*$ property on the boundary of $G$.

\br

{\bf Definitions}
Let $G$ be a locally compact second countable topological group.
We fix some right Haar measure $m=m_G$ and let $e$ be
the identity element of $G$.

%\br

\begin{defns}
\mbox{}
\begin{enumerate}
\item
A {\bf Borel $G$-space\/} is a standard Borel space $(X,\mathcal{X},G)$
with a Borel action $G\times X \to X$.

\item
A {\bf $G$-system\/} is a Borel $G$-space $(X,\mathcal{X},G)$
equipped with a probability measure $\mu$ whose measure class
is preserved by each element of $G$.

\item
We say that
a Borel probability measure $\mu$ on a Borel $G$-space
is {\bf strongly approximately transitive (SAT)\/} if the measure class
of $\mu$ is preserved by each element of $G$ and:
\begin{quote}
For every $A\in \mathcal{X}$ with $\mu(A) > 0$ there is
a sequence $g_n\in G$ such that $\lim_{n\to\infty}\mu(g_n A) =1$.
\end{quote}
When $\mu$ is SAT we will say that the system $(X,\mathcal{X},\mu,G)$ is
SAT.

\item
If $Y$ is a compact metric space, $G$ acts on $Y$ via
a continuous representation of $G$ into $\Homeo(Y)$ and
$\nu$ is a Borel probability measure whose measure class is
preserved by each element of $G$, we will say that
the dynamical system $(Y,\Bcal(Y),\nu,G)$ is
{\bf topological\/} ($\mathcal{B}(Y)$ denotes the Borel field on $Y$).

\item
Let $(X,\mathcal{X},\mu,G)$ be a $G$-system.
A topological $G$-system $(Y,\Bcal(Y),\nu,G)$
is a {\bf topological model\/} for $(X,\mathcal{X},\mu,G)$
if $\supp(\nu)=Y$ and $(X,\mathcal{X},\mu,G)$
and $(Y,\mathcal{B}(Y),\nu,G)$ are isomorphic as
$G$ measure boolean algebras;
i.e. there is an equivariant isomorphism between the corresponding
measure algebras.

\item
%eee
Recall that for a topological system $(Y,G)$, we say that a probability measure
$\nu$ on $Y$ is {\bf contractible\/} if for every $y\in Y$ there
exists a sequence $g_n\in G$ such that, in the weak$^*$ topology,
$\lim_{n\to\infty} g_n\nu = \delta_{y}$.

\item
Given a Borel system $(X,\mathcal{X},G)$ we say that a probability
measure $\mu$ on $X$ is {\bf absolutely contractible\/} if for each
topological model $(Y,\nu,G)$ of $(X,\mathcal{X},\mu,G)$
the measure $\nu$ on $Y$ is contractible.
\end{enumerate}
\end{defns}

\br

Our goal is to show that a measure $\mu$ is SAT iff it is
absolutely contractible.

\br

{\bf Contractible topological systems}

%eee
We now use a second characterization of contractible measures
(\cite{A}, theorem I.2, page 11).

\begin{thm}\label{4.2}
Let $(Y,G)$ be a topological dynamical system
with $Y$ a compact metric space. Let $\nu$ be a probability
measure on $Y$. The following properties are equivalent:
\begin{enumerate}
\item
The measure $\nu$ is contractible; i.e.
for every $y\in Y$, the measure $\del_y$ is a weak$^*$ limit point
of the set $\{g\nu:g\in G\}$ in $M(Y)$.
\item
The linear operator $P_\nu : C(Y) \to LUC(G)$
defined by
\begin{equation}\label{P}
P_\nu f(g) =\int_Y f(gy)\, d\nu(y),
\end{equation}
is an isometry of the Banach space $C(Y)$ of continuous functions
on $Y$ into the Banach space $LUC(G)$ of bounded left uniformly
continuous functions on $G$ (with $\sup$-norm).
\end{enumerate}
\end{thm}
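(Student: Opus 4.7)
The plan is to prove the equivalence by dual/Hahn--Banach reasoning, with an averaging argument as the key non-routine step. First I would verify that $P_\nu$ actually maps $C(Y)$ into $LUC(G)$: given $f\in C(Y)$, uniform continuity of $f$ on the compact space $Y$ together with continuity of the $G$-action supplies, for each $\ep>0$, a neighborhood $V$ of $e$ with $|f(vgy)-f(gy)|<\ep$ uniformly in $(g,y)$, whence $|P_\nu f(vg)-P_\nu f(g)|<\ep$. The contraction $\|P_\nu f\|_\infty \le \|f\|_\infty$ is automatic because each $g\nu$ is a probability measure, so the only real content is the reverse inequality.

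The implication (1) $\Rightarrow$ (2) is essentially immediate: given $f$, pick $y_0 \in Y$ with $|f(y_0)|=\|f\|_\infty$ and, using (1), a net $g_\al$ with $g_\al\nu\to \del_{y_0}$ weak$^*$; then $P_\nu f(g_\al)\to f(y_0)$, so $\|P_\nu f\|_\infty \ge \|f\|_\infty$.

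For (2) $\Rightarrow$ (1), set $K=\cls\{g\nu:g\in G\}\subset M(Y)$ (weak$^*$ closure). Applying the isometry hypothesis to translates $f-c$ as $c\to-\infty$ removes the absolute values, so (2) yields
\begin{equation*}
\sup_{y\in Y} f(y) \;=\; \sup_{\mu\in K} \int_Y f\,d\mu, \qquad \forall f\in C(Y).
\end{equation*}
By Hahn--Banach separation in the locally convex space $M(Y)$ with its weak$^*$ topology, this forces $\del_y \in \conv K$ for every $y\in Y$, where $\conv K$ denotes the weak$^*$ closed convex hull.

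The main obstacle is to upgrade $\del_y \in \conv K$ to $\del_y \in K$. Here I would use an extremality/averaging argument exploiting that $\del_y$ is concentrated at a single point. Fix a decreasing countable basis $\{U_k\}$ of open neighborhoods of $y$ and for each $k$ choose $f_k\in C(Y)$ with $f_k(y)=0$, $f_k\equiv 1$ on $Y\setminus U_k$, and $0\le f_k\le 1$. Since $\del_y\in \conv K$, there is a convex combination $\la_k=\sum_i a_{k,i}\,g_{k,i}\nu$ with $\int f_k\,d\la_k < 1/k^2$, hence $\la_k(Y\setminus U_k)<1/k^2$. Because the $a_{k,i}\ge 0$ sum to $1$, some index $i(k)$ satisfies $g_{k,i(k)}\nu(Y\setminus U_k)<1/k^2$. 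Setting $h_k=g_{k,i(k)}$, for any $f\in C(Y)$ and large $k$,
\begin{equation*}
\left|\int f\,d(h_k\nu)-f(y)\right| \;\le\; 2\|f\|_\infty\cdot h_k\nu(Y\setminus U_k) \;+\; \sup_{U_k}|f-f(y)|,
\end{equation*}
and both terms tend to $0$, so $h_k\nu\to \del_y$ weak$^*$, establishing (1). This averaging step, crucial for promoting a convex-combination approximation to an actual limit of $g\nu$'s, is the heart of the argument.
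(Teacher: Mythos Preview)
The paper does not actually prove this theorem: it is quoted verbatim as a result of Azencott (\cite{A}, th\'eor\`eme I.2), so there is no ``paper's own proof'' to compare against. Your argument therefore has to be judged on its own, and it is correct.

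A few remarks. Your verification that $P_\nu f\in LUC(G)$ is fine once one notes that joint continuity of the action together with compactness of $Y$ gives, for each $\delta>0$, a neighborhood $V$ of $e$ with $d(vz,z)<\delta$ for all $v\in V$ and $z\in Y$; this is the uniformity you are implicitly using. The trick of replacing $f$ by $f-c$ with $c\to-\infty$ to pass from the isometry statement to $\sup_Y f=\sup_{\mu\in K}\int f\,d\mu$ is clean, and the Hahn--Banach step is standard.

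Your ``averaging'' upgrade from $\del_y\in\conv K$ to $\del_y\in K$ is correct as written (the pigeonhole step ``some $g_{k,i}\nu(Y\setminus U_k)<1/k^2$'' follows because a convex combination of nonnegative numbers is at least their minimum). There is, however, a one-line alternative: $K$ is weak$^*$ compact, and $\del_y$, being extreme in $M(Y)$, is a fortiori extreme in $\conv K$; Milman's partial converse to Krein--Milman then gives $\del_y\in K$ directly. Either route is fine; yours is more self-contained, Milman's is shorter.
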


We note that the operator $P_\nu$ can be extended to the
larger Banach space $L^\infty(Y,\nu)$ using the same formula
\eqref{P}, and since for $f \in L^\infty(Y,\nu)$ and $g, h \in G$
\begin{align*}
\left | P_\nu f (g) - P_\nu f (h) \right | & =
|\langle f, g\nu - h\nu \rangle | \\
& \le \|f\|_\infty \| g\nu  - h\nu\|_{\text {total variation}}\\
& =  \|f\|_\infty \| \nu  - g^{-1} h\nu\|,
\end{align*}
we conclude that $P_\nu(L^\infty(Y,\nu)) \subset LUC(G)$.

\br

{\bf $G$-continuous functions}

Recall the following definition and representation theorem
from \cite{GTW}.

\begin{defn}
Given a $G$-system $(X,\mathcal{X},\mu,G)$, a function $f\in L^\infty(X,\mu)$
is called {\bf $G$-continuous\/} if $f \circ g_n$ converges in norm to
$f$ in $L^\infty(X,\mu)$ whenever $g_n \to e$.
\end{defn}

\begin{thm}\label{2.2}
Let $G$ be a Polish topological group.
A boolean $G$ system admits a topological model if and only if there exists a
sequence of $G$-continuous functions that generates the $\sig$-algebra
(equivalently: separates points).
\end{thm}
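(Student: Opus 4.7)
The forward direction is short. If $(Y,\Bcal(Y),\nu,G)$ is a topological model, then joint continuity of $G\times Y\to Y$ combined with compactness of $Y$ makes $g\mapsto f\circ g$ continuous from $G$ into $(C(Y),\|\cdot\|_\infty)$ for every $f\in C(Y)$; since the supremum norm dominates the essential supremum norm with respect to $\nu$, every continuous function is $G$-continuous in $L^\infty(\nu)\cong L^\infty(\mu)$. A countable sup-dense family in $C(Y)$ separates points on $Y$, so its pullback under the boolean isomorphism provides the required sequence.

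For the converse, the strategy is to build a topological model as the Gelfand spectrum of a carefully chosen separable $C^*$-subalgebra of $L^\infty(X,\mu)$ consisting of $G$-continuous functions. First I will note that $G$ acts by isometries on $L^\infty(X,\mu)$ via $(g\cdot f)(x)=f(g^{-1}x)$ (using quasi-invariance of $\mu$), and check that the set $\Ccal$ of all $G$-continuous functions is a unital, $\ast$-closed, $G$-invariant, norm-closed subalgebra of $L^\infty(X,\mu)$; the $G$-invariance of $\Ccal$ uses joint continuity of the group operation together with the isometry property.

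Next fix a countable dense subgroup $\Ga\subset G$ (which exists since $G$ is Polish) and let $\Acal$ be the sup-norm closure of the unital $\ast$-algebra over $\mathbb{Q}(i)$ generated by $\{\gamma\cdot f_n:\gamma\in\Ga,\,n\ge 1\}$. Then $\Acal\subset\Ccal$ is a separable $C^*$-subalgebra, $\Ga$-invariant by construction and in fact $G$-invariant: given $g\in G$ and $f\in\Acal$, choose $\gamma_k\to g$ in $\Ga$; then $g^{-1}\gamma_k\to e$, and $G$-continuity of $f$ together with the isometric action gives $\gamma_k\cdot f=g\cdot(g^{-1}\gamma_k\cdot f)\to g\cdot f$ in norm, so $g\cdot f\in\Acal$. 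Let $Y$ be the Gelfand spectrum of $\Acal$ (compact metric since $\Acal$ is separable), write $f\mapsto \tilde f$ for the isomorphism $\Acal\cong C(Y)$, and transport the $G$-action to a homeomorphism action on $Y$. The norm-continuity of $g\mapsto g\cdot f$ on $\Acal$ translates into norm-continuity of $g\mapsto\tilde f\circ g^{-1}$ in $C(Y)$, and a standard pointwise argument (if $g_n\to g$ and $y_n\to y$, then $\tilde f(g_n y_n)=(\tilde f\circ g_n)(y_n)\to\tilde f(gy)$ for every $\tilde f$) upgrades this to joint continuity of $G\times Y\to Y$.

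Finally, define $\nu$ on $Y$ from the state $\tilde f\mapsto\int f\,d\mu$ on $C(Y)$ via Riesz representation. Since the inclusion $\Acal\hookrightarrow L^\infty(X,\mu)$ is an order embedding, this state is faithful, so $\supp\nu=Y$; quasi-invariance of $\nu$ is inherited from that of $\mu$ through the $G$-equivariant correspondence. To upgrade $\Acal\cong C(Y)$ to an isomorphism of measure boolean algebras, I will argue that $\Acal$ is $L^2(\mu)$-dense: the hypothesis that $\{f_n\}$ separates points (equivalently, generates $\Xcal$ mod $\mu$-null sets), combined with a monotone-class / Stone--Weierstrass argument that approximates characteristic functions by polynomials in the $f_n$'s, provides this density. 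The isometric embedding $\Acal\hookrightarrow L^2(\nu)$ then extends to a $G$-equivariant isometric isomorphism $L^2(\mu)\cong L^2(\nu)$, yielding the required boolean isomorphism. The main obstacle is the interplay in step three between norm-continuity of the $G$-action on $\Acal$ and joint continuity of the induced action on the Gelfand spectrum $Y$, together with the bookkeeping that keeps $\Acal$ simultaneously separable, $G$-invariant, and rich enough to recover the full boolean structure; once this is in place, everything else follows from the standard dictionary between commutative $C^*$-algebras and compact Hausdorff $G$-spaces.
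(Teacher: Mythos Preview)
The paper does not actually prove this theorem; it is quoted verbatim as a representation theorem from \cite{GTW} (Glasner--Tsirelson--Weiss), and the surrounding text only remarks that the argument there, although stated for measure-preserving actions, goes through when the measure class is merely preserved. So there is no in-paper proof to compare against.

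That said, your outline is the standard Gelfand-theoretic construction and is essentially the argument that appears in \cite{GTW}. The forward direction is fine as you wrote it. For the converse you correctly identify the three real issues: (i) closing up a separable, $G$-invariant, unital $C^*$-subalgebra $\mathcal{A}$ of $G$-continuous functions; (ii) passing from norm-continuity of $g\mapsto g\cdot f$ on $\mathcal{A}$ to joint continuity of the induced action on $Y=\mathrm{Spec}(\mathcal{A})$; and (iii) showing that $\mathcal{A}$ is $L^2(\mu)$-dense so that the state $\tilde f\mapsto\int f\,d\mu$ upgrades to a boolean isomorphism. Your treatment of (i) via a countable dense subgroup, and of (ii) via the two-term estimate $|\tilde f(g_ny_n)-\tilde f(gy)|\le\|\tilde f\circ g_n-\tilde f\circ g\|_\infty+|\tilde f(gy_n)-\tilde f(gy)|$ together with the fact that $C(Y)$ determines the topology of the compact space $Y$, are exactly right.

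Two small points worth tightening. First, when you assert quasi-invariance of $\nu$, you should make explicit that a set $B\subset Y$ has $\nu(B)=0$ iff the corresponding element of the measure algebra of $(X,\mu)$ is zero, and that this property is $G$-invariant because the measure class of $\mu$ is; this is routine but should be said. Second, your faithfulness argument for the state (hence $\supp\nu=Y$) is correct, but the phrase ``order embedding'' is slightly oblique: what you use is simply that $0\le f\in L^\infty(\mu)$ and $\int f\,d\mu=0$ force $f=0$ in $L^\infty(\mu)$.
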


\br

We first remark that although in \cite{GTW}
the boolean system is assumed to be measure
preserving the proof, in fact, goes through if one assumes
only that the {\bf measure class\/} is preserved. Next we note that
the condition in the theorem of admitting a
sequence of $G$-continuous functions that generates the $\sig$-algebra,
is always satisfied when the group $G$ is, in addition, locally compact
(see corollary \ref{lc} below).
Thus, in this case, one recovers the classical result that ensures
the existence of a topological model for every measure class
preserving boolean action of a locally compact second countable group.

More importantly, we observe that an immediate corollary of the proof of
theorem 2.2 in \cite{GTW} is the following version of the theorem
(still for a general Polish topological group $G$).

\begin{thm}\label{thm2.3}
Let $(X,\Xcal,\mu,G)$ be a a boolean system which
satisfies the condition of theorem \ref{2.2} and let $f$ be
a function in $L^\infty(X,\mu)$. Then
there exists a topological model $(Y,\nu,G)$ such that
the function $F \in L^\infty(Y,\nu)$ corresponding to $f$
is in $C(Y)$ iff $f$ is $G$-continuous.
\end{thm}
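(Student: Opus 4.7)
The plan is to mirror the construction used in the proof of Theorem \ref{2.2} from \cite{GTW}, but enlarging the countable generating family of $G$-continuous functions so that it contains the given $f$. Both directions will then follow essentially from the general construction.

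For the easy direction, suppose a topological model $(Y,\nu,G)$ exists in which the function $F$ corresponding to $f$ lies in $C(Y)$. Since the action $G\times Y\to Y$ is jointly continuous and $Y$ is compact, the map $g\mapsto F\circ g$ is continuous from $G$ into $C(Y)$ with the sup norm. Hence $g_n\to e$ implies $\|F\circ g_n - F\|_{\infty}\to 0$, which in turn gives $\|f\circ g_n - f\|_{L^\infty(\mu)}\to 0$ via the measure algebra isomorphism. Thus $f$ is $G$-continuous.

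For the converse, assume $f$ is $G$-continuous and let $\{f_n:n\ge 1\}$ be a countable family of $G$-continuous functions separating points of $(X,\mathcal{X},\mu)$, which exists by the hypothesis of Theorem \ref{2.2}. Pick a countable dense subset $\{g_k\}\subset G$ and let $\mathcal{A}$ be the norm-closed unital $C^*$-subalgebra of $L^\infty(X,\mu)$ generated by
$$
\{f\circ g_k:k\ge 1\}\cup\{f_n\circ g_k:n,k\ge 1\}.
$$
Since the set of $G$-continuous functions is a norm-closed $G$-invariant subalgebra of $L^\infty(X,\mu)$ (a standard fact: the norm $\|f\circ g - f\|_\infty$ is dominated by $\|f\|_\infty\cdot\|g\nu-\nu\|$ arguments familiar from the discussion of $P_\nu$ above), every element of $\mathcal{A}$ is $G$-continuous, and by construction $\mathcal{A}$ is separable and $G$-invariant. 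Let $Y$ be the Gelfand spectrum of $\mathcal{A}$, a compact metric space with $\mathcal{A}\cong C(Y)$, and let $\nu\in M(Y)$ correspond to the state $\mu\restriction\mathcal{A}$. Because $\mathcal{A}\hookrightarrow L^\infty(X,\mu)$ is an isometric embedding, the state is faithful, so $\supp(\nu)=Y$. The induced $G$-action on $Y$ is by homeomorphisms, and $G$-continuity of every element of $\mathcal{A}$ is precisely what forces the representation $G\to\Homeo(Y)$ to be continuous (equivalently, the action $G\times Y\to Y$ to be jointly continuous); this is the central point in \cite{GTW}. Since the $\{f_n\}$ are among the generators of $\mathcal{A}$ and separate points of $(X,\mu)$, the resulting $(Y,\nu,G)$ is a topological model of $(X,\mathcal{X},\mu,G)$. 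By construction $f$ is a generator of $\mathcal{A}$, hence corresponds to an element $F\in\mathcal{A}\cong C(Y)$, as required.

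The main obstacle is the verification that the induced $G$-action on $Y$ is jointly continuous — which is exactly the content of the construction in \cite{GTW} and rests precisely on the $G$-continuity of all elements of $\mathcal{A}$. Once this is granted, the remaining verifications (that $\supp(\nu)=Y$, that the measure algebras agree, and that $f\mapsto F\in C(Y)$) are formal consequences of the Gelfand correspondence.
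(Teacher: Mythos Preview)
Your proposal is correct and is exactly the construction the paper has in mind: the paper gives no proof of its own, merely observing that the statement is ``an immediate corollary of the proof of theorem 2.2 in \cite{GTW}'', and your argument reproduces that construction (enlarge the separating family of $G$-continuous functions to include $f$, take the Gelfand spectrum of the generated separable $G$-invariant $C^*$-subalgebra, and invoke the joint-continuity step from \cite{GTW}).

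One small correction: your parenthetical justification for why the $G$-continuous functions form a norm-closed subalgebra is misdirected. The bound $\|f\|_\infty\cdot\|g\nu-\nu\|$ from the discussion of $P_\nu$ controls $|P_\nu f(g)-P_\nu f(e)|$, not $\|f\circ g - f\|_\infty$; taken literally it would make every $L^\infty$ function $G$-continuous. The fact you need is elementary and standard: closure under norm limits follows from
\[
\|f\circ g - f\|_\infty \le 2\|f-f_n\|_\infty + \|f_n\circ g - f_n\|_\infty,
\]
and closure under sums, products, and conjugation is a direct estimate. With this fix the argument is complete.
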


Thus when $G$ is a locally compact second countable group,
theorem \ref{thm2.3} applies for every $G$ system.

\br

\begin{lem}\label{psi}
Let $(X,\Xcal,\mu,G)$ be a $G$-system and
$f\in L^\infty(X,\mu)$. Let $\psi: G \to \R$
be a non-negative continuous function with compact support.
Define $\hat f = f*\psi$ by
$$
\hat f (x) = \int_G f(hx)\psi(h) \, dm_G(h).
$$
The function $\hat f$ is $G$-continuous.
\end{lem}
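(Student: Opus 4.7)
The plan is to show the $G$-continuity of $\hat f$ directly from the definition, by exploiting the right-invariance of $m_G$ to move the translation from the argument $x$ onto the ``weight'' function $\psi$, and then using the (right) uniform continuity of a compactly supported continuous function on $G$.

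First I would fix a sequence $g_n \to e$ in $G$ and compute, for $x \in X$,
\[
(\hat f \circ g_n)(x) = \int_G f(h g_n x)\,\psi(h)\,dm_G(h).
\]
Substituting $h' = h g_n$ and using that $m_G$ is a \emph{right} Haar measure (so $dm_G(h' g_n^{-1}) = dm_G(h')$), this becomes
\[
(\hat f \circ g_n)(x) = \int_G f(h' x)\,\psi(h' g_n^{-1})\,dm_G(h').
\]
Subtracting the corresponding expression for $\hat f(x)$ and taking absolute values yields the pointwise bound
\[
\bigl|(\hat f\circ g_n)(x) - \hat f(x)\bigr| \;\le\; \|f\|_\infty \cdot \int_G \bigl|\psi(h g_n^{-1}) - \psi(h)\bigr|\,dm_G(h),
\]
which is independent of $x$; hence the same bound holds for $\|\hat f\circ g_n - \hat f\|_{L^\infty(\mu)}$.

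Next I would argue that the right-hand side tends to $0$. Since $\psi$ is continuous with compact support $K_0:=\supp(\psi)$, for all $n$ sufficiently large the supports of both $\psi$ and $\psi(\cdot\, g_n^{-1})$ lie inside a single compact set $K := K_0 \cdot V$, where $V$ is a fixed compact neighborhood of $e$ containing $\{g_n : n \ge N_0\}$; in particular $m_G(K) < \infty$. A compactly supported continuous function on a locally compact group is right uniformly continuous, so $\psi(h g_n^{-1}) \to \psi(h)$ uniformly in $h$ as $g_n \to e$. Combining uniform convergence with the finiteness of $m_G(K)$ gives
\[
\int_G \bigl|\psi(h g_n^{-1}) - \psi(h)\bigr|\,dm_G(h) \;\longrightarrow\; 0,
\]
and therefore $\|\hat f \circ g_n - \hat f\|_{L^\infty(\mu)} \to 0$, which is precisely the $G$-continuity of $\hat f$.

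There is no serious obstacle; the only care required is to keep track of left versus right Haar measure conventions so that the change of variables produces a translate of $\psi$ (not a translate of $f$), and to note that the uniform continuity statement needed for $\psi$ is the right-uniform one, which is automatic for continuous compactly supported functions on a locally compact group.
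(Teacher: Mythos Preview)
Your argument is correct and follows essentially the same route as the paper: the same change of variables using right Haar invariance leads to the identical bound $\|\hat f\circ g - \hat f\|_\infty \le \|f\|_\infty \int_G |\psi(hg^{-1}) - \psi(h)|\,dm_G(h)$. The only difference is that you spell out, via uniform continuity and a common compact support, why this integral tends to $0$ as $g\to e$, whereas the paper simply asserts this final convergence.
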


\begin{proof}

For $g\in G$ we have
\begin{align*}
\| \hat f\circ g - \hat f\|_\infty & =
{\ess\text{-}\sup}_{x\in X}\left |
\int_G f(hgx)\psi(h)\, dm_G(h) - \int_G f(hx)\psi(h)\, dm_G(h)
\right | \\
& =
{\ess\text{-}\sup}_{x\in X}\left |
\int_G f(hx)\psi(hg^{-1})\, dm_G(h) - \int_G f(hx)\psi(h)\, dm_G(h)
\right |\\
& \le
{\ess\text{-}\sup}_{x\in X} \int_G
|f(hx)| |\psi(hg^{-1}) - \psi(h)|\, dm_G(h)\\
& \le
\|f\|_\infty \int_G |\psi(hg^{-1}) - \psi(h)|\, dm_G(h).
\end{align*}
Thus $g\to e$ implies $\| \hat f\circ g - \hat f\|_\infty \to 0$
and $\hat f$ is $G$-continuous.
\end{proof}

\br

We let $\{\psi_n : n=1,2,\dots\}$ be a fixed approximate identity. This
means that there is a decreasing sequence $V_n$ of precompact
neighborhoods of $e$ in $G$ with $\cap_{n=1}^\infty V_n =\{e\}$,
and $\psi_n: G \to \R$ is a sequence
of nonnegative continuous functions
with ${\supp}\, \psi_n \subset V_n$ and $\int_G \psi_n\,dm_G=1$
for $n=1,2,\dots$.

\begin{cor}\label{lc}
The bounded $G$-continuous functions are dense in $L^2(X,\mu)$.
\end{cor}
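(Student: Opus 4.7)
The plan is to construct $G$-continuous approximations by convolving with the approximate identity $\{\psi_n\}$, exactly as in Lemma \ref{psi}. Since $\mu$ is a probability measure, $L^\infty(X,\mu)\subset L^2(X,\mu)$ and by truncation the bounded functions are norm-dense in $L^2(X,\mu)$; so it suffices to show that every $f\in L^\infty(X,\mu)$ can be approximated in the $L^2$-norm by bounded $G$-continuous functions.

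Given $f\in L^\infty(X,\mu)$, set $\hat f_n=f*\psi_n$ via the formula of Lemma \ref{psi}. By that lemma each $\hat f_n$ is $G$-continuous, and since $\psi_n\ge 0$ with $\int_G\psi_n\,dm_G=1$ we have $\|\hat f_n\|_\infty\le\|f\|_\infty$, so each $\hat f_n$ is also bounded. It remains to prove that $\hat f_n\to f$ in $L^2(X,\mu)$.

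Applying Jensen's inequality to the probability measure $\psi_n\,dm_G$ and then Fubini yields
\begin{align*}
\|\hat f_n-f\|_{L^2(\mu)}^2
&\le \int_X\int_G |f(hx)-f(x)|^2\,\psi_n(h)\,dm_G(h)\,d\mu(x)\\
&= \int_G \psi_n(h)\,\|f\circ h-f\|_{L^2(\mu)}^2\,dm_G(h).
\end{align*}
The main obstacle is to establish that $h\mapsto f\circ h$ is continuous from $G$ into $L^2(X,\mu)$ at the identity, for bounded $f$. This is a consequence of the strong continuity of the natural unitary representation of $G$ on $L^2(X,\mu)$ attached to any measure-class preserving Borel action of a locally compact second countable group on a standard Borel space, combined with the boundedness of $f$ and quasi-invariance of $\mu$ (which allows us to compare $\int |f\circ h|^2\,d\mu$ with $\int|f|^2\,d\mu$ via the Radon--Nikodym derivative $dh\mu/d\mu\to 1$).

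Granting this continuity, fix $\varepsilon>0$ and choose a neighborhood $W$ of $e$ with $\|f\circ h-f\|_{L^2(\mu)}^2<\varepsilon$ for all $h\in W$. Since $V_n\downarrow\{e\}$, for all sufficiently large $n$ we have $\supp(\psi_n)\subset V_n\subset W$, and then the displayed bound is at most $\varepsilon\int_G\psi_n\,dm_G=\varepsilon$. Hence $\hat f_n\to f$ in $L^2(X,\mu)$, and the bounded $G$-continuous functions are dense.
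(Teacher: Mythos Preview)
Your proof is correct and follows essentially the same route as the paper: both arguments convolve a bounded $f$ with the approximate identity $\{\psi_n\}$, invoke Lemma~\ref{psi} to get $G$-continuity of $f*\psi_n$, and use $\|f*\psi_n-f\|_2\to 0$. The paper simply asserts this last convergence as ``easy to check,'' whereas you supply the details via Jensen's inequality and the $L^2$-continuity of $h\mapsto f\circ h$ at the identity; your sketch of that continuity (strong continuity of the Koopman unitary applied to the constant function $1$ gives $\sqrt{dh\mu/d\mu}\to 1$ in $L^2$, and then boundedness of $f$ lets you pass from $U_{h^{-1}}f$ to $f\circ h$) is the standard argument and is sound.
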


\begin{proof}
It is easy to check that a sequence $\{\psi_n:n=1,2,\dots\}$ as above
is an approximate identity in $L^2(X,\mu)$; i.e.
$\|f * \psi_n - f\|_2 \to 0$ for every bounded $f \in L^2(X,\mu)$.
Now apply lemma \ref{psi}.
\end{proof}

\br

\begin{prop}\label{e}
Let $(X,\Xcal,\mu,G)$ be a $G$-system and
$0\ne f=\ch_A \in L^\infty(X,\mu)$. Let $\psi_n: G \to \R$
be an approximate identity in $L^2(X,\mu)$ as above. Then
$$
\lim_{n\to\infty}\|f * \psi_n\|_\infty = \|f\|_\infty=1.
$$
\end{prop}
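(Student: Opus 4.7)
The plan is to prove the limit by matching upper and lower bounds on $\|f*\psi_n\|_\infty$. The upper bound is immediate: since $\psi_n \ge 0$ with $\int_G \psi_n\,dm_G = 1$ and $\|\ch_A\|_\infty = 1$, pushing absolute values inside the defining integral
$$
\hat f_n(x) = \int_G f(hx)\psi_n(h)\, dm_G(h)
$$
yields $|\hat f_n(x)| \le 1$ for every $x$, hence $\|f * \psi_n\|_\infty \le 1$ for all $n$.

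For the reverse inequality I would invoke the $L^2$-approximate-identity property quoted in the proof of Corollary \ref{lc}, namely that $\|f * \psi_n - f\|_2 \to 0$ for every bounded $f \in L^2(X,\mu)$. Then, given any subsequence $\{n_k\}$, pass to a further subsequence $\{n_{k_j}\}$ along which $f * \psi_{n_{k_j}} \to f$ $\mu$-almost everywhere. On the positive-measure set $A = \{f=1\}$ this means $\hat f_{n_{k_j}}(x) \to 1$ for $\mu$-a.e.\ $x \in A$, and an application of Egorov's theorem produces a subset $B \subseteq A$ with $\mu(B) > 0$ on which the convergence is uniform. For every $\epsilon > 0$ the inequality $\hat f_{n_{k_j}}(x) > 1 - \epsilon$ then holds on $B$ for all $j$ sufficiently large, forcing $\|f * \psi_{n_{k_j}}\|_\infty \ge 1 - \epsilon$, and hence $\liminf_j \|f * \psi_{n_{k_j}}\|_\infty \ge 1$.

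Since every subsequence of $\{\|f * \psi_n\|_\infty\}$ admits a further subsequence whose limit inferior is at least $1$, and since the full sequence is uniformly bounded above by $1$, the standard sub-sub-sequence principle delivers $\lim_n \|f * \psi_n\|_\infty = 1 = \|f\|_\infty$. The only genuine wrinkle is that $L^2$-convergence does not directly give pointwise convergence for the full sequence, which is precisely why the sub-sub-sequence extraction combined with Egorov's theorem is needed to convert almost-everywhere convergence on a positive-measure set into a lower bound on the essential supremum; everything else is a routine bookkeeping of norms.
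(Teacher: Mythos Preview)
Your argument is correct, and it takes a genuinely different route from the paper's. The paper does not invoke the $L^2$-approximate-identity property at all; instead it passes to a topological model, uses regularity of $\mu$ to replace $A$ by a closed set, finds an open $U\supset A$ with $\mu(U\setminus A)<\ep$, and then exploits the continuity of $h\mapsto \mu(hA\tri A)$ together with $\supp(\psi_n)\subset V_n$ to show that $\hat f_n$ vanishes off $U$ while $\int_X \hat f_n\,d\mu \ge \mu(A)-\ep$. A Chebyshev-type estimate on the set $D_\del=\{x\in U:\hat f_n(x)<1-\del\}$ then gives $\mu(D_\del)\le 2\ep/\del$, so $\hat f_n\ge 1-\del$ on a set of measure at least $\tfrac12\mu(A)$.

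Your approach is softer and more economical, leaning on the $L^2$-convergence already asserted in Corollary~\ref{lc}. What the paper's argument buys is a quantitative conclusion (an explicit positive-measure set on which $\hat f_n$ is uniformly close to $1$) and independence from the unproved ``easy to check'' claim in Corollary~\ref{lc}. Incidentally, your Egorov/subsequence machinery can be streamlined: since $0\le \hat f_n\le 1$, if $\|\hat f_n\|_\infty<1-\del$ then $|f-\hat f_n|\ge \del$ almost everywhere on $A$, whence $\|f-\hat f_n\|_2\ge \del\sqrt{\mu(A)}$, contradicting $L^2$-convergence directly along the full sequence.
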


\begin{proof}
With no loss in generality we can assume that $(X,\mu,G)$
is a topological model. By the regularity of the measure $\mu$
we can also assume (by passing to a subset) that $A$ is closed.
Again by regularity of $\mu$, given $\ep>0$ we can choose
an open neighborhood $U$
of $A$ in $X$ such that $\mu(U\setminus A) < \ep$.
Since
$$
\lim_{h\to e} \mu(hA \tri A)=\lim_{h\to e} \|\ch_{hA} - \ch_A\|_1 = 0,
$$
we can choose a neighborhood $V=V^{-1}$ of $e$ in $G$
such that for all $h\in V$
\begin{equation*}
(i)\quad
\|\ch_{hA} - \ch_A\|_1= \mu(hA \tri A) < \ep \quad
{\text{and }} \quad
(ii)\quad  hA \subset U.
\end{equation*}
Let $\psi=\psi_n$ be a member
of the approximate identity which satisfies
$\supp(\psi)\subset V$.

Set
$$
\hat f (x) = f*\psi = \int_G f(hx)\psi(h) \, dm_G(h)
= \int_G f(hx)\, dp(h),
$$
where $dp = \psi \cdot dm_G$, a probability measure
on $G$.
By (ii), if $x\not\in U$ then  $f(hx) = \ch_A(hx)=0$ for every $h\in V$
and it follows that $\hat f(x) =0$.
Thus
\begin{equation}
\int_X \hat f \,d\mu(x) = \int_U \hat f \,d\mu(x).
\end{equation}
By Fubini and the estimation (i),
\begin{equation}
\int_X \hat f \,d\mu(x) = \int_X \mu(hA) \,dp(h) \ge \mu(A) - \ep.
\end{equation}

For $\del>0$ let
$$
D=D_\del = \{x \in U : \hat f(x) < 1 -\del \}.
$$
Then
\begin{align*}
\mu(A) -\ep & \le \int_X \hat f \,d\mu(x) = \int_U \hat f \,d\mu(x)\\
& = \int_D \hat f \,d\mu(x) + \int_{U\setminus D} \hat f \,d\mu(x)\\
& \le (1 - \del)\mu(D) + \mu(U \setminus D)\\
& =  - \del \mu(D) + \mu(U)\\
& \le - \del \mu(D) + \mu(A) + \ep,
\end{align*}
hence $\mu(D) \le \frac{2\ep}{\del}$.
Fixing $\del$ at the outset we choose $\ep$
so that, say, $\frac{2\ep}{\del} \le \frac{1}{2}\mu(A)$,
and then for sufficiently large $n$,
$$
\hat f(x) =  f * \psi_n(x) \ge 1 - \del,
$$
for every $x$ in the set $U \setminus D$ whose measure
%$\mu(U \setminus D) \ge \frac{1}{2}\mu(A) - \ep >0$.
$\mu(U \setminus D) \ge \frac{1}{2}\mu(A)>0$.
This completes the proof of the proposition.
\end{proof}

\br

{\bf Sat and absolute contractibility are equivalent}

\begin{thm}\label{sat=az}
Let $(X,\Xcal,\mu,G)$ be a $G$ system, then $\mu$ is SAT
iff it is absolutely contractible.
\end{thm}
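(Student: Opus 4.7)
The plan is to prove both directions using the tools of Section 6. For SAT $\Rightarrow$ absolutely contractible, I would first observe that the SAT property depends only on the $G$-measure Boolean algebra --- it is equivalent to asking that for every $A$ with $\mu(A)>0$ there exist $h_n$ with $h_n\mu(A)\to 1$ --- and therefore transfers to any topological model $(Y,\Bcal(Y),\nu,G)$. Given $y_0\in Y$, I would use $\supp(\nu)=Y$ together with a countable decreasing base $U_1\supset U_2\supset\cdots$ of open neighborhoods of $y_0$, applying SAT to each $U_k$ and diagonalizing to obtain one sequence $g_k$ with $g_k\nu(U_k)>1-1/k$. Since every open $U\ni y_0$ contains some $U_m$, this gives $g_k\nu(U)\to 1$ for every such $U$, which by the standard criterion implies $g_k\nu\to\del_{y_0}$ weak$^*$; hence $\nu$ is contractible.

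For the reverse direction, fix $A\in\Xcal$ with $\mu(A)>0$ and $\ep>0$; the target is $\gamma\in G$ with $\mu(\gamma A)>1-\ep$. I would take $f=\ch_A$ and form the convolution $\hat f=f*\psi_n$ with the approximate identity, choosing $n$ so that Proposition \ref{e} gives $\|\hat f\|_\infty>1-\ep/2$; by Lemma \ref{psi}, $\hat f$ is $G$-continuous. Since $G$ is locally compact second countable, Corollary \ref{lc} ensures the hypothesis of Theorem \ref{2.2} is satisfied, so Theorem \ref{thm2.3} supplies a topological model $(Y,\nu,G)$ of $(X,\Xcal,\mu,G)$ in which $\hat f$ corresponds to $F\in C(Y)$. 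With $\supp(\nu)=Y$ we have $\|F\|_{C(Y)}=\|\hat f\|_{L^\infty(\mu)}>1-\ep/2$, so some $y_0\in Y$ satisfies $F(y_0)>1-\ep/2$. Absolute contractibility yields $g_k$ with $g_k\nu\to\del_{y_0}$ weak$^*$, and hence $\int F\,d(g_k\nu)\to F(y_0)>1-\ep/2$.

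The final and most delicate step is converting a near-maximum of $F$ under contraction into a set $\gamma A$ of large $\mu$-measure. Via the Boolean isomorphism and Fubini,
\begin{equation*}
\int F\,d(g_k\nu)=\int_X\hat f(g_k x)\,d\mu(x)=\int_G \psi_n(h)\,\mu(g_k^{-1}h^{-1}A)\,dm_G(h),
\end{equation*}
a weighted average (with total weight one) of quantities in $[0,1]$. Since this average exceeds $1-\ep$ for large $k$, some $h\in\supp(\psi_n)$ must satisfy $\mu(g_k^{-1}h^{-1}A)>1-\ep$; taking $\gamma=g_k^{-1}h^{-1}$ finishes the proof, and letting $\ep\to 0$ yields the SAT sequence. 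I expect the main obstacle to be conceptual rather than computational: the indicator $\ch_A$ is invisible in a continuous model, and the key idea is to replace it by the $G$-continuous $\hat f$, with Proposition \ref{e} guaranteeing no loss of sup-norm mass so that absolute contractibility can be brought to bear.
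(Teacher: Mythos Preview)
Your proof is correct. The reverse direction (absolutely contractible $\Rightarrow$ SAT) is essentially the paper's argument: both of you replace $\ch_A$ by the $G$-continuous $\hat f=f*\psi_n$ via Proposition~\ref{e}, pass to a topological model in which $\hat f$ becomes continuous (Theorem~\ref{thm2.3}), use contractibility to make $P_\mu(\hat f)(g)$ nearly equal to $\|\hat f\|_\infty$, and then undo the convolution by the pigeonhole/averaging step to find a single $\gamma$ with $\mu(\gamma A)$ close to $1$. The only cosmetic difference is that the paper invokes Azencott's isometry criterion (Theorem~\ref{4.2}) to get $\|P_\mu(\hat f)\|=\|\hat f\|_\infty$ and then picks $g$ near the supremum, whereas you contract directly to a point $y_0$ with $F(y_0)$ near $\|F\|$; these amount to the same thing.

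The forward direction (SAT $\Rightarrow$ absolutely contractible) is where you depart from the paper. The paper again routes through Theorem~\ref{4.2}: it shows that SAT forces $P_\nu:L^\infty(Y,\nu)\to LUC(G)$ to be an isometry (by pushing mass into the superlevel set $\{f\ge 1-\ep\}$), hence $P_\nu\restriction C(Y)$ is an isometry, hence $\nu$ is contractible. Your argument is more direct and avoids Theorem~\ref{4.2} altogether: you transfer SAT to the model via the measure-algebra isomorphism, then for each $y_0\in Y=\supp(\nu)$ use a countable neighborhood base $U_k\downarrow\{y_0\}$ and SAT to produce $g_k$ with $g_k\nu(U_k)>1-1/k$, so $g_k\nu\to\del_{y_0}$. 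This is shorter and more elementary; the paper's route has the mild advantage of yielding the stronger conclusion that $P_\nu$ is isometric on all of $L^\infty(Y,\nu)$, not just on $C(Y)$.
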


\begin{proof}
Let $(Y,\nu,G)$ be a compact model and let $f \in L_\infty(\nu)$
with $\|f\|_\infty =1$ and $\ep>0$ be given. Set $A=\{y \in Y: |f(y)|\ge 1-\ep\}$.
Then $\nu(A)>0$ and we now assume that also 
$\nu(A_+)>0$ where $A_+=\{y \in Y:  f(y) \ge 1-\ep\}$.
By assumption there is a sequence $g_n \in G$ such that
$\lim_{n\to\infty}\nu(g^{-1}_n A_+) =1$.
Hence
\begin{align*}
P_\nu f(g_n) &= \int_{g^{-1}_nA_+} f(g_n y) \,d\nu(y) + 
\int_{g^{-1}_nA^c_+}f(g_n y) \,d\nu(y)\\
%&= \int_{g^{-1}_n A_+} f(y) \,d\nu(y) + \int_{A^c_+}f(g_n y) \,d\nu(y)\\
& \ge (1-\ep) \nu(g^{-1}_nA_+)  -   \nu(g^{-1}_nA^c_+) \to 1 -\ep.
\end{align*}
Hence $\limsup_{n\to\infty}P_\nu f(g_n)\ge 1 -\ep$.
Similarly when $\nu(A_-)>0$ with $A_-=\{y \in Y:  f(y) \le -1 +\ep\}$
we get $\limsup_{n\to\infty}|P_\nu f(g_n)|\ge 1 -\ep$ .
Thus the $LUC(G)$ norm $\|P_\nu f\| \ge 1 -\ep$
and as this holds for every $\ep$ we get $\|P_\nu f\| = 1$.
Thus $P_\nu : L^\infty(Y,\nu) \to LUC(G)$ is
an isometry. In particular $P_\nu : C(Y) \to LUC(G)$
is an isometry and by theorem \ref{4.2}, $(Y,\nu,G)$ is
contractible.

Conversely, assume that $\mu$ is absolutely contractible
and let $A \in \Xcal$ be a set with positive $\mu$ measure.
Write $f = \ch_A $.  By proposition \ref{e}, given $\ep>0$,
we can choose $\psi=\psi_n: G \to \R$,
%continuous function with the properties (i) $0\le \psi \le 1$,
%(ii) $\supp \psi =\cls\{g\in G: \psi(g) >0\}$ is compact,
%and (iii) $\int_G \psi(g)\, dm_G(g)=1$, where $m_G$
%is some choice of a left Haar measure on $G$.
a function in the approximate identity, such that
for $\hat f = f*\psi$,
%defined by
%$$
%\hat f (g) = \int_G f(hx)\psi(h) \, dm_G(h).
%$$
\begin{equation}\label{eq1}
\big | \|\hat f\|_\infty -  1 \big | =
\big | \|\hat f\|_\infty -  \|f\|_\infty \big |
< \ep.
\end{equation}

By lemma \ref{psi}, the function $\hat f$ is $G$-continuous and
by theorem \ref{2.3} there is a topological model
$(Y,\nu,G)$ for $(X,\Xcal,\mu,G)$ in which the
$L^\infty(Y,\nu)$ function corresponding to $\hat f$,
say $F$, is in $C(Y)$. By assumption the measure
$\nu$ on $Y$ is contractible and thus by theorem \ref{4.2},
$P_\nu(F) = P_\mu(\hat f)\in LUC(G)$ satisfies
$$
\|P_\mu(\hat f)\| = \|F\| = \|\hat f\|_\infty.
$$
Let $g\in G$ satisfy
\begin{equation}\label{eq2}
\|P_\mu(\hat f)\| < P_\mu(\hat f)(g) + \ep.
\end{equation}

Now
\begin{align*}
P_\mu(\hat f)(g) & = \int_X \hat f(gx)\, d\mu(x)\\
& = \int_X \int_G  f(hgx)\psi(h)\, dm(h)\, d\mu(x)\\
& = \int_G \psi(h)\left(\int_X  f(hgx)\, d\mu(x)\right)\, dm(h)\\
& = \int_G \psi(h) P_\mu(f)(hg) \, dm(h),
\end{align*}
and, since $\psi \ge 0$ and $\int_G \psi\,dm=1$, it follows that
for some $h\in G$
\begin{equation}\label{eq3}
P_\mu(f)(hg) > P_\mu(\hat f)(g) - \ep.
\end{equation}

Collecting the estimations \eqref{eq1}, \eqref{eq2} and \eqref{eq3} we get
\begin{equation*}
P_\mu(f)(hg) > P_\mu(\hat f)(g) - \ep
> \|P_\mu(\hat f)\| - 2 \ep =
 \|\hat f\|_\infty - 2 \ep
>  1 - 3 \ep.
\end{equation*}
Explicitly
\begin{equation*}
P_\mu(f)(hg) =
\int_X f(hgx)\,d\mu(x) = \mu(hgA)
>  1 - 3 \ep
\end{equation*}
and the proof is complete.
\end{proof}

\br

%%%%%%%%%%%%%%%%%%%%%%%%%%%%%%%%%%%%%%%%%%%%%%%%%%%%%%%%%%%%%%%%%%%%%%%%%%%%%%%%
%%%%%% %%%%%            Bibliography                            %%%%%%%%%%
%%%%%%%%%%%%%%%%%%%%%%%%%%%%%%%%%%%%%%%%%%%%%%%%%%%%%%%%%%%%%%%%%%%%%%%%%%%%%%%%


\begin{thebibliography}{WWW}

\bibitem{A}
R. Azencott,
{\em Espace de Poisson des groupes localement compact\/},
Springer-Verlag, Lecture Notes in Math.\ {\bfseries 148}, 1970.

%eee
\bibitem{BS}
U. Bader and Y. Shalom, 
{\em Factor and normal subgroup theorems for lattices in products of groups},
Invent. Math. {\bfseries163}, (2006), 415-454.

\bibitem{BQ} 
Y. Benoit and J.-F. Quint,
{\em Measure stationnaires et ferm{\'e}s des espace homog{\`e}nes},
C. R. Math. Acad. Sci. Paris {\bfseries 347}, (2009), no. 1, 9Ð13.

\bibitem{EGS}
R. Ellis, E. Glasner and L. Shapiro,
{\em Proximal-isometric flows\/},
Advances in Math.\ {\bfseries 17},
(1975), 213 - 260.

\bibitem{EN}
R. Ellis and M. Nerurkar,
{\em Weakly almost periodic flows\/},
Trans.\ Amer.\ Math.\ Soc.\ {\bfseries 313}, (1989), 103-119.

\bibitem{F63}
H. Furstenberg, 
{\em Noncommuting random products},  
Trans. Amer. Math. Soc.  {\bfseries 108},  (1963),  377-428.

\bibitem{F}
H. Furstenberg,
{\em A Poisson formula for semi-simple Lie groups\/},
Ann.\ of Math.\  {\bfseries 77}, (1963), 335-386.

%\ref \key F
%\by H.~Furstenberg
%\paper A poisson formula for semi-simple Lie groups.
%\jour Ann. Math.
%\vol 77
%\yr 1963
%\pages 335--386
%\endref

\bibitem{F1}
H. Furstenberg,
{\em Random walks and discrete subgroups of Lie groups\/},
Advances in probability and related topics, Vol {\bfseries 1},
Dekkers, 1971, pp. 1-63.

%eee
\bibitem{F73}
H. Furstenberg,
{\em Boundary theory and stochastic processes on homogeneous spaces},  Harmonic analysis on homogeneous spaces (Proc. Sympos. Pure Math., Vol. XXVI, Williams Coll., Williamstown, Mass., 1972),  
193-229. Amer. Math. Soc., Providence, R.I., 1973

\bibitem{Fur4}
H. Furstenberg,
{\em Ergodic behavior of diagonal measures and a theorem of
Szemer\'edi on arithmetic progressions\/},
J.\ d'Analyse Math.\
{\bfseries 31}, (1977), 204-256.

%eee
\bibitem{F80}
H. Furstenberg,
{\em Random walks on Lie Groups},
in: Wolf J. A., de Weild M.(Eds.), {\em Harmonic Analysis and Representations 
of Semi-Simple Lie groups},  D. Reidel, Dordrecht,  1980, pp. 467-489.

\bibitem{Fur5}
H. Furstenberg,
{\em Recurrence in ergodic theory and combinatorial number
theory\/},
Princeton university press,  Princeton, N.J., 1981.

\bibitem{F2}
H. Furstenberg,
{\em Stiffness of group actions\/},
Lie groups and ergodic theory (Mumbai, 1996),
Tata Inst.\ Fund.\ Res.\ Stud.\ Math.\ {\bfseries 14}, Bombay,
1998, pp. 105-117.

%\ref \key F2
%\by H.~Furstenberg
%\paper Stiffness of group actions
%\inbook Lie groups and ergodic theory (Mumbai, 1996),
%Tata Inst. Fund. Res. Stud. Math., 14
%\pages 105--117
%\publ Tata Inst. Fund. Res., Bombay
%\yr 1998
%\endref

\bibitem{Gl1}
E. Glasner,
{\em Proximal flows\/},
Lecture Notes in Math.\ {\bfseries 517}, Springer-Verlag, 1976.

\bibitem{Gl4}
E. Glasner,
{\em Quasifactors of minimal systems\/},
Topol.\ Meth.\ in Nonlinear Anal.\  {\bfseries 16},
(2000), 351-370.

\bibitem{G}
E. Glasner,
{\em Ergodic theory via joinings\/},
AMS, Surveys and Monographs, {\bf{101}}, 2003.

%\ref\key G
%\by S.~Glasner
%\paper Quasi-factors in ergodic theory
%\jour Israel J. of Math.
%\vol 45
%\yr 1983
%\pages 198--208
%\endref

\bibitem{GTW}
E. Glasner, B. Tsirelson and B. Weiss,
\emph{The automorphism group of the Gaussian measure
cannot act pointwise}.
Probability in mathematics.  Israel J. Math.  {\bfseries 148},  (2005), 305-329.
%(http://www.arxiv.org/archive/math math, DS/0311450.)

%bbb
\bibitem{J}
W. Jaworski
{\em Strongly approximately transitive group actions, the Choquet-Deny
theorem, and polynomial growth},
Pacific J. of Math., {\bfseries 165}, (1994), 115-129. 

%eee
\bibitem{K}
V. A. Kaimanovich, 
{\em The Poisson formula for groups with hyperbolic properties},
Ann. of Math. (2), {\bfseries 152},  (2000), 659-692.

%bbb
\bibitem{Ka}
V. A. Kaimanovich,
{\em SAT actions and ergodic properties of the horosphere foliation}, Rigidity in dynamics and geometry (Cambridge, 2000), 261Ð282, Springer, Berlin, 2002. 

\bibitem{KV}
V. Kaimanovich and A. M. Vershik,
{\em Random walks on discrete groups:
boundary and entropy\/},
Ann.\ Probab.\  {\bfseries 11},
(1983), 457-490.

%\bibitem{K}
%V. Kaimanovich,
%{\em The Poisson formula for groups with hyperbolic
%properties\/},
%Ann.\ of Math.\  {\bfseries 152},
%(2000), 659--692.

%eee
\bibitem{M}
T. Meyerovitch,
{\em On multiple and polynomial recurrent extensions of infinite
measure preserving transformations}, 
Arxiv:math/0703914v2.

\bibitem{NZ1}
A. Nevo and R. J. Zimmer,
{\em Homogenous projective factors for
actions of semi-simple Lie groups\/},
Invent.\ Math.\ {\bfseries 138}, (1999), 229-252.

%\ref\key NZ1
%\by A.~Nevo, R.~J.~Zimmer
%\paper Homogenous projective factors for
%actions of semi-simple Lie groups
%\jour Invent. Math.
%\vol 138
%\yr 1999
%\pages 229--252
%\endref

\bibitem{NZ2}
A. Nevo and R. J. Zimmer,
{\em Rigidity of Furstenberg entropy for semisimple Lie group
actions\/},
Jour.\ Ann.\ Sci.\ \'Ecole Norm.\ Sup.\
{\bfseries 33}, (2000), 321-343.

%\ref\key NZ2
%\by A. Nevo, R. J. Zimmer
%\paper Rigidity of Furstenberg entropy for semisimple Lie group
%actions
%\jour Ann. Sci. \'Ecole Norm. Sup.
%\vol 33
%\yr 2000
%\pages 321--343
%\endref
%
%
%\endRefs

\bibitem{NZ3}
A. Nevo and R. Zimmer,
{\em  A structure theorem for actions of semisimple Lie groups},
Ann. of Math. (2)  {\bfseries156},  (2002), 565-594.

%eee
\bibitem{V}
W. A. Veech,
{\em Topological dynamics\/},
Bull.\ Amer.\ Math. Soc.\ {\bfseries 83},
(1977), 775-830.

\bibitem{Z1}
R. J. Zimmer,
{\em  Extensions of ergodic group actions\/},
Illinois J.\ Math.\ {\bfseries 20}, (1976), 373-409.

\bibitem{Z2}
R. J. Zimmer,
{\em  Ergodic actions with generalized discrete spectrum\/},
Illinois J.\ Math.\ {\bfseries 20}, (1976), 555-588.


\end{thebibliography}
\end{document}